\def\qed{\hfill $\vcenter{\hrule height .3mm
\hbox {\vrule width .3mm height 2.1mm \kern 2mm \vrule width .3mm
height 2.1mm} \hrule height .3mm}$ \bigskip}
\def \Sph{\mathbb{S}^{n-1}}
\def \RR {\mathbb R}
\def \NN {\mathbb N}
\def \EE {\mathbb E}
\def \PP {\mathbb P}
\def \eps {\varepsilon}
\def \vphi {\varphi}
\def \re {\mathrm{Re}}
\newcommand\norm[1]{\left\lVert#1\right\rVert}
\newtheorem{theorem}{Theorem}
\newtheorem{lemma}{Lemma}
\newtheorem{claim}[lemma]{Claim}
\newtheorem{conjecture}{Conjecture}
\theoremstyle{definition}
\theoremstyle{remark}
\long\def\symbolfootnotetext[#1]#2{\begingroup
\def\thefootnote{\fnsymbol{footnote}}\footnotetext[#1]{#2}\endgroup}
\newcommand*{\mint}[1]{%
  \mint@l{#1}{}%
}
\newcommand*{\mint@l}[2]{%
  \@ifnextchar\limits{%
    \mint@l{#1}%
  }{%
    \@ifnextchar\nolimits{%
      \mint@l{#1}%
    }{%
      \@ifnextchar\displaylimits{%
        \mint@l{#1}%
      }{%
        \mint@s{#2}{#1}%
      }%
    }%
  }%
}
\newcommand*{\mint@s}[2]{%
  \@ifnextchar_{%
    \mint@sub{#1}{#2}%
  }{%
    \@ifnextchar^{%
      \mint@sup{#1}{#2}%
    }{%
      \mint@{#1}{#2}{}{}%
    }%
  }%
}
\def\mint@sub#1#2_#3{%
  \@ifnextchar^{%
    \mint@sub@sup{#1}{#2}{#3}%
  }{%
    \mint@{#1}{#2}{#3}{}%
  }%
}
\def\mint@sup#1#2^#3{%
  \@ifnextchar_{%
    \mint@sup@sub{#1}{#2}{#3}%
  }{%
    \mint@{#1}{#2}{}{#3}%
  }%
}
\def\mint@sub@sup#1#2#3^#4{%
  \mint@{#1}{#2}{#3}{#4}%
}
\def\mint@sup@sub#1#2#3_#4{%
  \mint@{#1}{#2}{#4}{#3}%
}
\newcommand*{\mint@}[4]{%
  \mathop{}%
  \mkern-\thinmuskip
  \mathchoice{%
    \mint@@{#1}{#2}{#3}{#4}%
        \displaystyle\textstyle\scriptstyle
  }{%
    \mint@@{#1}{#2}{#3}{#4}%
        \textstyle\scriptstyle\scriptstyle
  }{%
    \mint@@{#1}{#2}{#3}{#4}%
        \scriptstyle\scriptscriptstyle\scriptscriptstyle
  }{%
    \mint@@{#1}{#2}{#3}{#4}%
        \scriptscriptstyle\scriptscriptstyle\scriptscriptstyle
  }%
  \mkern-\thinmuskip
  \int#1%
  \ifx\\#3\\\else_{#3}\fi
  \ifx\\#4\\\else^{#4}\fi  
}
\newcommand*{\mint@@}[7]{%
  \begingroup
    \sbox0{$#5\int\m@th$}%
    \sbox2{$#5\int_{}\m@th$}%
    \dimen2=\wd0 %
    \let\mint@limits=#1\relax
    \ifx\mint@limits\relax
      \sbox4{$#5\int_{\kern1sp}^{\kern1sp}\m@th$}%
      \ifdim\wd4>\wd2 %
        \let\mint@limits=\nolimits
      \else
        \let\mint@limits=\limits
      \fi
    \fi
    \ifx\mint@limits\displaylimits
      \ifx#5\displaystyle
        \let\mint@limits=\limits
      \fi
    \fi
    \ifx\mint@limits\limits
      \sbox0{$#7#3\m@th$}%
      \sbox2{$#7#4\m@th$}%
      \ifdim\wd0>\dimen2 %
        \dimen2=\wd0 %
      \fi
      \ifdim\wd2>\dimen2 %
        \dimen2=\wd2 %
      \fi
    \fi
    \rlap{%
      $#5%
        \vcenter{%
          \hbox to\dimen2{%
            \hss
            $#6{#2}\m@th$%
            \hss
          }%
        }%
      $%
    }%
  \endgroup
}
\begin{document}
\title{Information and Dimensionality of Anisotropic Random Geometric Graphs}
\author{
	Ronen Eldan\\
	\small{Weizmann Institute of Science}
	\and
	Dan Mikulincer\\
	\small{Weizmann Institute of Science}
}
\date{}
\maketitle
\begin{abstract}
	This paper deals with the problem of detecting non-isotropic high-dimensional geometric structure in random graphs. Namely, we study a model of a random geometric graph in which vertices correspond to points generated randomly and independently from a non-isotropic $d$-dimensional Gaussian distribution, and two vertices are connected if the distance between them is smaller than some pre-specified threshold. We derive new notions of dimensionality which depend upon the eigenvalues of the covariance of the Gaussian distribution. If $\alpha$ denotes the vector of eigenvalues, and $n$ is the number of vertices, then the quantities $\left(\frac{\norm{\alpha}_2}{\norm{\alpha}_3}\right)^6/n^3$ and $\left(\frac{\norm{\alpha}_2}{\norm{\alpha}_4}\right)^4/n^3$ determine upper and lower bounds for the possibility of detection. This generalizes a recent result by Bubeck, Ding, R\'acz and the first named author from \cite{BDER14} which shows that the quantity $d/n^3$ determines the boundary of detection for isotropic geometry. Our methods involve Fourier analysis and the theory of characteristic functions to investigate the underlying probabilities of the model. The proof of the lower bound uses information theoretic tools, based on the method presented in ~\cite{BG15}.
\end{abstract}
\centerline{\emph{Keywords:} geometric graphs, random graphs, Erd\H os-R\'enyi model, dimensionality of embeddings}

\section{Introduction}
This study continues a line of work initiated by Bubeck, Ding, R\'acz and the first named author \cite{BDER14}, in which the problem of detecting geometric structure in large graphs was studied. In other words, given a large graph one is interested in determining whether or not it was generated using a latent geometric structure. The main contribution of this study is a generalization of the results to the anisotropic case.\\

Extracting information from large graphs is an extensively studied statistical task. In many cases, a given network, or graph, reflects some underlying structure; for example, a biological neuronal network is likely to reflect certain characteristics of its functionality such as physical location and cell structure. The objective of this paper is thus the detection of such an underlying geometric structure. \\

As a motivating example, consider the graph representing a large social network. It may be assumed that each node (or user) is described by a set of numerical parameters representing its properties (such as geographical location, age, political association, interests, etc). It is plausible to assume that two nodes are more likely to be connected when their two respective points in parameter space are more correlated. Adopting this assumption, the nodes of such a graph may be thought of as points in a Euclidean space, with links appearing between two nodes when their distance is small enough. A natural question in this context would be: What can be said about the geometric structure by inspection of the graph itself? Specifically, can one distinguish between such a graph and a graph with no underlying geometric structure? \\

In statistical terms, given a graph $G$ on $n$ vertices, our null hypothesis is that $G$ is an instance of the standard Erd\H os-R\'enyi random graph $G(n,p)$ \cite{ER60}, where the presence of each edge is determined independently, with probability $p$: 
$$H_0: G \sim G(n,p).$$
On the other hand, for the alternative, we consider the so-called random geometric graph. In this model each vertex is a point in some metric space and an edge is present between two points if the distance between them is smaller than some predefined threshold. Perhaps the most well-studied setting of this model is the isotropic Euclidean model, where the vertices are generated uniformly on the $d$-dimensional sphere or simply from the standard normal $d$-dimensional distribution. However, it seems that this model is too simplistic to reflect real world social networks.
One particular problem, which we intend to tackle in this study, is the isotropicity assumption, which amounts to the fact that all of the properties associated with a node have the same significance in determining the network structure. It is clear that some parameters, such as geographic location, can be more significant than others. We therefore propose to extend this model to a non-isotropic setting. Roughly speaking, we replace the sphere with an ellipsoid; Instead of generating vertices from $\mathcal{N}(0,\mathrm{I}_n)$, they will be generated from  $\mathcal{N}(0,D_\alpha)$ for some diagonal matrix $D_\alpha$ with non-negative entries. We denote the model by $G(n,p,\alpha)$ where $p$ is the probability of an edge appearing, and the diagonal of $D_\alpha$ is given by a vector $\alpha \in \RR^d$. 
Formally, let $X_1,...,X_n$ be i.i.d points generated from $\mathcal{N}(0,D_\alpha)$. In $G(n,p,\alpha)$ vertices correspond to $X_1,...,X_n$ and two distinct vertices are joined by an edge if and only if $\left\langle X_i, X_j\right\rangle \geq t_{p,\alpha}$, where $t_{p,\alpha}$ is the unique number satisfying $\PP(\left\langle X_1,X_2\right\rangle \geq t_{p,\alpha}) = p$. Our alternative hypothesis is thus
$$H_1: G \sim G(n,p, \alpha).$$

In this paper, we will focus on the high-dimensional regime of the problem. Namely, we assume that the dimension and covariance matrix can depend on $n$. This point of view becomes highly relevant when considering recent developments in data sciences, where big data and high-dimensional feature spaces are becoming more prevalent. We will focus on the dense regime, where $p$ is a constant independent of $n$ and $\alpha$.\\
\subsection{Previous work}
This paper can be seen a direct follow-up of ~\cite{BDER14}, which, as noted above, deals with the isotropic model of $G(n,p,d)$ in which $D_\alpha = \mathrm{I}_d$. In the dense regime, it was shown that the total variation between the models depends asymptotically on the ratio $\frac{d}{n^3}$. The dependence is such that if $d >> n^3$, then $G(n,p,d)$ converges in total variation to $G(n,p)$. Conversely, on the other hand, if $d << n^3$ the total variation converges to 1.\\

Our starting point is thus the result of ~\cite{BDER14} stated as follows:
\begin{theorem} \label{thm: BDER}
	(a) Let $p \in (0,1)$ be fixed and assume that $d/n^3 \to 0$. Then,
	$$\mathrm{TV}(G(n,p), G(n,p,d)) \to 1.$$
	(b) Furthermore, if $d/n^3 \to \infty$ then
	$$\mathrm{TV}(G(n,p), G(n,p,d)) \to 0.$$
\end{theorem}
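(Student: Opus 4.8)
The plan is to exhibit a statistic that is concentrated around two well-separated values under the two hypotheses, namely the signed triangle count $\tau(G)=\sum_{\{i,j,k\}}(\mathbbm{1}_{ij}-p)(\mathbbm{1}_{jk}-p)(\mathbbm{1}_{ik}-p)$, where $\mathbbm{1}_{ij}$ indicates that $ij$ is an edge. Under $H_0$ the edges are independent, so $\EE_{H_0}\tau=0$, and since any two distinct triangles have non-shared edges contributing independent mean-zero factors, $\mathrm{Var}_{H_0}(\tau)=\binom n3\big(p(1-p)\big)^3=\Theta(n^3)$. Under $H_1$, set $Y_{ij}=\langle X_i,X_j\rangle/\sqrt d$ and note that $(Y_{12},Y_{23},Y_{13})=d^{-1/2}\sum_{a=1}^d\big(X_1^aX_2^a,\,X_2^aX_3^a,\,X_1^aX_3^a\big)$ is a normalized sum of $d$ i.i.d.\ $\RR^3$-valued vectors with identity covariance whose only nonvanishing third moment is the fully mixed one, equal to $1$. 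A local CLT / Edgeworth-type expansion obtained through the characteristic function then gives that the density of $(Y_{12},Y_{23},Y_{13})$ equals $\phi(y_{12})\phi(y_{23})\phi(y_{13})\big(1+d^{-1/2}y_{12}y_{23}y_{13}+O(d^{-1})\big)$, where $\phi$ is the standard Gaussian density. Since $G(n,p,d)$ places an edge $ij$ precisely when $Y_{ij}\ge s_d:=t_{p,d}/\sqrt d$, with $\PP(\mathcal N(0,1)\ge s_d)=p+O(d^{-1})$ and $\int_{s_d}^{\infty}y\,\phi(y)\,dy=\phi(s_d)$, inserting the expansion and using that centering by $p$ annihilates the constant part of the correction up to $O(d^{-1})$ gives $\EE_{H_1}\big[(\mathbbm{1}_{12}-p)(\mathbbm{1}_{23}-p)(\mathbbm{1}_{13}-p)\big]=d^{-1/2}\phi(s_d)^3+O(d^{-1})$, a positive quantity of order $d^{-1/2}$ (as $s_d\to\Phi^{-1}(1-p)$). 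Hence $\EE_{H_1}\tau\asymp n^3/\sqrt d$, while a routine accounting of the covariances of triangles sharing an edge or a vertex shows $\mathrm{Var}_{H_1}(\tau)=o\big((\EE_{H_1}\tau)^2\big)$. The test ``declare $H_1$ iff $\tau\ge\tfrac12\EE_{H_1}\tau$'' then has both error probabilities tending to $0$ by Chebyshev's inequality exactly when $n^3/\sqrt d\gg n^{3/2}$, i.e.\ $d/n^3\to0$, proving $\mathrm{TV}\to1$.

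\textbf{Part (b): no detection when $d\gg n^3$.} The plan is to lift the comparison to the level of random matrices. Let $W=(XX^\top-d\,\mathrm{I}_n)/\sqrt d$ be the normalized Wishart matrix, and let $M$ be a symmetric GOE matrix with independent $\mathcal N(0,1)$ off-diagonal entries. Thresholding the off-diagonal entries of a symmetric matrix at level $s$ is a deterministic map $\Psi_s$, and $G(n,p,d)=\Psi_{s_d}(W)$, while $\Psi_{\Phi^{-1}(1-p)}(M)$ has independent $\mathrm{Bernoulli}(p)$ edges and is therefore distributed as $G(n,p)$. By the data-processing inequality, the triangle inequality for $\mathrm{TV}$, and subadditivity over product measures,
$$\mathrm{TV}\big(G(n,p,d),G(n,p)\big)\le \mathrm{TV}(W,M)+\binom n2\big|\PP(\mathcal N(0,1)\ge s_d)-p\big|,$$
and the last term is $O(n^2/d)=o(1)$ in this regime. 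It remains to show $\mathrm{TV}(W,M)\to0$, for which, by Pinsker's inequality, it suffices that $\mathrm{KL}(W\,\|\,M)\to0$. For $d\ge n$ (which holds here), $XX^\top$ has the explicit Wishart density proportional to $\det(S)^{(d-n-1)/2}e^{-\mathrm{tr}(S)/2}$ on the positive-definite cone; substituting $S=d\,\mathrm{I}+\sqrt d\,W$, expanding $\log\det(\mathrm{I}+W/\sqrt d)$, and taking the expectation under $W$, the quadratic part cancels against the Gaussian weight and the leftover is controlled by the cubic term via the moment identity $\EE_W[\mathrm{tr}(W^3)]=d^{-3/2}\EE[\mathrm{tr}((XX^\top-d\,\mathrm{I})^3)]\asymp n^3/\sqrt d$, together with comparable contributions from $\EE_W[\mathrm{tr}(W^4)]\asymp n^3$, from the $(d-n-1)$-versus-$d$ discrepancy, and from the normalizing constant (its asymptotics handled by Stirling's formula). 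This yields $\mathrm{KL}(W\,\|\,M)=O(n^3/d)\to0$ when $d/n^3\to\infty$, and the proof is complete.

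\textbf{Main obstacle.} In part (a) the delicate point is the Edgeworth expansion of the density of $(Y_{12},Y_{23},Y_{13})$, which must be established via the characteristic function with a remainder uniform enough (in particular in the tails) to pin down the sign and order of $\EE_{H_1}\tau$, plus the bookkeeping that gives $\mathrm{Var}_{H_1}(\tau)=o\big((\EE_{H_1}\tau)^2\big)$ for every $d=o(n^3)$, including bounded $d$. I expect the genuinely hardest ingredient to be the $\mathrm{KL}$-divergence estimate in part (b): one must expand the log-likelihood ratio between the Wishart and Gaussian ensembles precisely enough --- controlling the $\log\det$ expansion, the effect of the $(d-n-1)$ exponent, and the asymptotics of the multivariate-gamma normalizing constant --- to isolate the sharp exponent $n^3/d$ and verify that all remaining terms are of the same order or smaller.
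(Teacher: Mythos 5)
This theorem is not proved in the present paper: it is the result of \cite{BDER14} quoted as the starting point, so there is no in-paper proof to compare against line by line. That said, your proposal is a sound reconstruction and its architecture matches what the paper reports and generalizes. For part (a) you use exactly the signed-triangle statistic; the only divergence is in how the order-$d^{-1/2}$ boost of $\EE\tau$ is obtained. In \cite{BDER14} this comes from explicit spherical-cap computations exploiting rotational symmetry, whereas your Edgeworth/characteristic-function derivation of the density correction $d^{-1/2}y_{12}y_{23}y_{13}$ is in fact much closer to what the present paper does (Lemma \ref{lem: into char function} and Lemma \ref{lem: lower bound half} are precisely a rigorous Fourier-analytic version of this expansion, needed because the spherical symmetry is lost in the anisotropic case). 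Your numerology checks out: $\EE_{H_1}\tau\asymp n^3/\sqrt d$ against standard deviation $O(n^{3/2}+n^2/\sqrt d+n^{5/2}/d)$, which is dominated whenever $d=o(n^3)$; the one point to be careful about is that for Gaussian (rather than spherical) points, triangles sharing a single vertex are not exactly independent (both conditional expectations depend on $\norm{X_1}$), so that covariance must be bounded rather than declared zero — it is small enough, but it is part of the bookkeeping. For part (b), your reduction $\mathrm{TV}(G(n,p,d),G(n,p))\le \mathrm{TV}(W,M)+\binom{n}{2}O(d^{-1})$ is exactly the decomposition used in Section 5 of the paper. Where you differ is in bounding $\mathrm{TV}(W,M)$: \cite{BDER14} manipulates the Wishart and GOE densities directly, while the present paper (following \cite{BG15}) deliberately avoids the explicit Wishart density by tensorizing the relative entropy via the chain rule, row by row. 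Your route — Pinsker plus a direct expansion of the log-likelihood ratio from the explicit density $\det(S)^{(d-n-1)/2}e^{-\mathrm{tr}(S)/2}$ — is a legitimate hybrid and gives the right order $\mathrm{KL}(W\Vert M)=O(n^3/d)$, but note two caveats: it requires $d\ge n$ for the density to exist (harmless here since $d\gg n^3$), and, as you anticipate, the expansion of $\log\det(\mathrm{I}+W/\sqrt d)$ must be justified on the event that the spectrum stays bounded, which is the analogue of the $\lambda_{\min}$ truncation in Lemma \ref{lem: ln bound}. This explicit-density route is also exactly what fails to generalize to the anisotropic setting, which is why the paper switches to the entropy chain rule.
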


One of the fundamental differences between $G(n,p)$ and $G(n,p,d)$ is a consequence of the triangle inequality. That is, if two points $u$ and $v$ are both close to a point $w$, then $u$ and $v$ cannot be too far apart. This roughly means that if both $u$ and $v$ are connected to $w$, then there is an increased probability of $u$ being connected to $v$, unlike the case of the Erd\H os-R\'enyi graph where there is no dependence between the edges. Thus, counting the number of triangles in a graph seems to be a natural test to uncover geometric structure.\\

The idea of using triangles was extended  in \cite{BDER14} and a variant was proposed: the {\it signed triangle}. This statistic was successfully used to completely characterize the asymptotics of $\mathrm{TV}(G(n,p), G(n,p,d))$ in the isotropic case. To understand the idea behind signed triangles, we first note that if $A$ is the adjacency matrix of $G$ then the number of triangles in $G$ is given by $\mathrm{Tr(A^3)}$. The "number" of signed triangles is represented by $\mathrm{Tr}((A-p \mathbf{1})^3)$ where $\mathbf{1}$ is the matrix whose entries are all equal to $1$. It turns out that the variance of signed triangles is significantly smaller than the corresponding quantity for regular triangles. \\

The methods used in \cite{BDER14} relied heavily on the symmetries of the sphere. As mentioned, our goal is to generalize this to the non-isotropic case, which requires us to apply different methods. The dimension $d$ of the isotropic space arises as a natural parameter  when discussing the underlying probabilities of Theorem \ref{thm: BDER}. Clearly, however, when different coordinates of the space have different scales, the dimension by itself has little meaning. For example, consider a $d$-dimensional ellipsoid with one axis being large and the rest being much smaller. This ellipsoid behaves more like a $1$-dimensional sphere rather than a $d$-dimensional one, in the sense mentioned above. It would stand to reason the more anisotropic the ellipsoid is, the smaller its effective dimension would be. \\

\subsection{Main results and ideas}

In accordance to the above, our first task is to find a suitable notion of dimensionality for our model. For $q \geq1$, let $\norm{\cdot}_q$ stand for the $q$-norm. We derive the quantities $\left(\frac{\norm{\alpha}_2}{\norm{\alpha}_3}\right)^6$ and $\left(\frac{\norm{\alpha}_2}{\norm{\alpha}_4}\right)^4$ as the new notions of  dimension, where $\alpha$ parametrizes the eigenvalues of $D_\alpha$, the covariance matrix of the normal distribution, and is considered as a $d$-dimensional vector.  We note that, in the isotropic case, those quantities reduce to $d$ which also maximizes the expressions.\\

This notion of dimension allows us to tackle the main objective of this paper; studying the total variation distance between $G(n,p)$ and $G(n,p,\alpha)$. Considering what we know about the isotropic case our question becomes: What conditions are required from $\alpha$, so that the total variation remains bounded away from 0?
The following theorem provides a sufficient condition on $\alpha$ as well as a necessary one:
\begin{theorem}{\it (a)} \label{thm: main}
	Let $p \in (0,1)$ be fixed and assume that $\left(\frac{\norm{\alpha}_2}{\norm{\alpha}_3}\right)^6/n^3 \to 0$. Then, $$\mathrm{TV}(G(n,p),G(n,p,\alpha)) \to 1.$$ 
	{\it (b)} Furthermore, if $\left(\frac{\norm{\alpha}_2}{\norm{\alpha}_4}\right)^4/n^3 \to \infty$, then
	$$\mathrm{TV}(G(n,p),G(n,p,\alpha)) \to 0.$$ 
\end{theorem}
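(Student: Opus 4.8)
The plan is to treat parts (a) and (b) separately, since one is a detection (lower bound on total variation) and the other a non-detectability (upper bound) statement, and the two require genuinely different machinery.

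For part (b), the natural approach is a second-moment / information-theoretic argument in the spirit of \cite{BG15}. I would bound $\mathrm{TV}(G(n,p),G(n,p,\alpha))$ by controlling the $\chi^2$-divergence (or mutual information) between the two models. Conditionally on the latent points $X_1,\dots,X_n$, the graph $G(n,p,\alpha)$ is a product of Bernoulli's with parameters depending on the pairwise inner products $\langle X_i,X_j\rangle$; after centering at $p$ these edge indicators are, to leading order, linear in the (small) correlations $\rho_{ij} = \langle X_i,X_j\rangle / (\norm{X_i}\norm{X_j})$ or in the Gaussian inner products themselves. The key quantity governing the divergence is then the typical size of $\langle X_1,X_2\rangle$, whose variance is $\norm{\alpha}_2^2$ after normalizing so that $\mathbb{E}\norm{X_i}^2 = \sum_i \alpha_i$; more precisely the relevant ``signal strength'' per edge scales like $\norm{\alpha}_4^2 / \norm{\alpha}_2^2$ (the fourth moment of the spectral measure controls the non-Gaussian/quartic correction to the joint law of the inner products, since the $\binom{n}{3}$ triangle terms feel the fourth cumulant). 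Summing the per-triangle contributions over the $\asymp n^3$ triangles and comparing to $1$ gives exactly the threshold $\big(\norm{\alpha}_2/\norm{\alpha}_4\big)^4 / n^3 \to \infty$. So the steps are: (i) write the likelihood ratio conditioned on the Gram matrix and expand it; (ii) identify the leading term of the $\chi^2$-divergence as a sum over small subgraphs (triangles dominate); (iii) estimate the relevant moments of $\langle X_i,X_j\rangle$ using the characteristic-function / Fourier tools advertised in the abstract, showing the triangle term is $O\!\big(n^3 \norm{\alpha}_4^4 / \norm{\alpha}_2^4\big)$ and higher-order subgraphs are negligible; (iv) conclude $\mathrm{TV}\to 0$ via Pinsker or the $\chi^2$--TV inequality.

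For part (a), I would exhibit a statistic that distinguishes the two models, and the signed-triangle count $T := \mathrm{Tr}\big((A - p\mathbf 1)^3\big)$ is the obvious candidate, following \cite{BDER14}. The scheme is the standard second-moment test: compute $\mathbb{E}_{H_1}[T]$, show it is large relative to $\sqrt{\mathrm{Var}_{H_0}(T) + \mathrm{Var}_{H_1}(T)}$, and invoke Chebyshev to separate the hypotheses. Under $H_0$ one has $\mathbb{E}_{H_0}[T]=0$ and $\mathrm{Var}_{H_0}(T) \asymp n^3$. Under $H_1$, $\mathbb{E}_{H_1}[T] \asymp n^3 \cdot \tau_\alpha$, where $\tau_\alpha$ is the excess probability that a signed triangle closes, i.e. roughly the third mixed moment $\mathbb{E}\big[f(\langle X_1,X_2\rangle) f(\langle X_2,X_3\rangle) f(\langle X_1,X_3\rangle)\big]$ for the centered connection function $f$; expanding this to leading order in the correlations, the dominant contribution comes from the term $\mathbb{E}[\langle X_1,X_2\rangle\langle X_2,X_3\rangle\langle X_1,X_3\rangle] = \sum_i \alpha_i^3 = \norm{\alpha}_3^3$, normalized by $\norm{\alpha}_2^3$ (the scale of a single inner product squared), giving $\tau_\alpha \asymp \norm{\alpha}_3^3 / \norm{\alpha}_2^3$. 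Hence $\mathbb{E}_{H_1}[T] \asymp n^3 \norm{\alpha}_3^3 / \norm{\alpha}_2^3$, and the test succeeds once $\big(n^3 \norm{\alpha}_3^3/\norm{\alpha}_2^3\big)^2 \gg n^3$, i.e. $n^3 \big(\norm{\alpha}_2/\norm{\alpha}_3\big)^6 \to \cdots$ — wait, this is exactly $\big(\norm{\alpha}_2/\norm{\alpha}_3\big)^6 / n^3 \to 0$ after rearranging. The remaining work is to also control $\mathrm{Var}_{H_1}(T)$ and show it does not exceed the square of the mean; this involves bounding covariances between overlapping signed triangles, which in turn requires moment bounds on products of inner products of up to five or six of the $X_i$'s.

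The main obstacle in both parts is the same: precise asymptotic control of the joint law of inner products $\langle X_i, X_j \rangle$ when the $X_i \sim \mathcal N(0,D_\alpha)$ are highly anisotropic, since the relevant expansions must be carried out to the first non-Gaussian order and the error terms must be shown to be uniformly small in terms of the norms $\norm{\alpha}_q$ rather than the dimension $d$. Unlike the sphere, there is no rotational symmetry to exploit, so I expect this to be handled via the characteristic function of $\langle X_1,X_2\rangle$ — which factorizes over coordinates as $\prod_i (1 + \alpha_i^2 \xi^2)^{-1/2}$-type expressions — and a careful contour/Fourier-inversion analysis to extract the density and its moments, matching the $\norm{\alpha}_2,\norm{\alpha}_3,\norm{\alpha}_4$ quantities that appear in the thresholds. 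Establishing that the signed-triangle variance under $H_1$ is genuinely of order $\max\{n^3, (\mathbb{E}_{H_1}T)^2/n^3\}$ or smaller, and that no larger subgraph statistic is needed, is the most delicate bookkeeping step.
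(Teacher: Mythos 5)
Your plan for part (a) follows the same route as the paper: the signed--triangle statistic, the heuristic that the excess triangle probability is driven by $\EE[\langle X_1,X_2\rangle\langle X_1,X_3\rangle\langle X_2,X_3\rangle]=\norm{\alpha}_3^3$ against the scale $\norm{\alpha}_2^3$, and a Chebyshev separation. But essentially all of the work lies in the step you defer: turning that heuristic into the two--sided estimate $p^3+\delta_p\left(\frac{\norm{\alpha}_3}{\norm{\alpha}_2}\right)^3\le \PP(E_p)\le p^3+\Delta\left(\frac{\norm{\alpha}_3}{\norm{\alpha}_2}\right)^3$ of Theorem \ref{thm: prob estimation}. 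There is no closed form for the joint density of the three inner products, so one cannot simply ``expand to leading order in the correlations''; the paper inverts the characteristic function $\prod_i(1+\alpha_i^2(a^2+b^2+c^2)+2\alpha_i^3abc\,\mathrm{i})^{-1/2}$ against the distributional Fourier transform of a product of shifted sign functions and controls the resulting principal--value integral on several scales, and it obtains the matching \emph{upper} bound (which you need for $\mathrm{Var}_{H_1}(T)$, via the covariance of two triangles sharing an edge) by a separate multivariate Berry--Esseen argument. Your sketch names the right answer and the right tool (characteristic functions) but supplies none of this analysis, so part (a) is a correct plan rather than a proof.

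Part (b) contains a genuine gap. The $\chi^2$ / likelihood--ratio expansion over subgraphs that you describe is not the argument of \cite{BG15} and is not known to work here; indeed your own accounting exposes the problem. The per--triangle contribution to a second--moment computation is $\left(\EE_{H_1}[\text{signed triangle}]\right)^2\asymp\left(\frac{\norm{\alpha}_3}{\norm{\alpha}_2}\right)^6$, so ``summing over $n^3$ triangles'' would give the threshold $\left(\frac{\norm{\alpha}_2}{\norm{\alpha}_3}\right)^6/n^3$ --- that is, it would prove Conjecture \ref{conj}, which is open precisely because the higher--order terms you dismiss as ``negligible'' are not under control. The quantity $\left(\frac{\norm{\alpha}_4}{\norm{\alpha}_2}\right)^4$ does not arise from graph triangles at all. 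The paper's actual argument works upstream of the thresholding: by data processing, $\mathrm{TV}(G(n,p),G(n,p,\alpha))\le \mathrm{TV}(M(n),W(n,\alpha))+\mathrm{TV}(K_p(M(n)),H_{p,\alpha}(M(n)))$, where $W(n,\alpha)$ is the normalized off--diagonal Wishart matrix and $M(n)$ is GOE; Pinsker plus the chain rule for relative entropy reduce the first term to $-\EE\ln\det\left(\mathbb{Y}D_\alpha\mathbb{Y}^T/\norm{\alpha}_2^2\right)$, where $\norm{\alpha}_4^4$ enters as $\EE\langle A_j,A_k\rangle^2$ for two rows --- a pairwise, matrix--level quantity --- and the factor $n^3$ arises as $n$ induction steps times $n^2$ pairs, together with small--ball and $\chi^2$--tail estimates to handle the event that the Wishart matrix is nearly singular. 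The second TV term, caused by the mismatch between the thresholds $t_{p,\alpha}/\norm{\alpha}_2$ and $\Phi^{-1}(p)$, is absent from your plan and requires its own entropy estimate. So for (b) you are missing the key reduction, and the route you propose would, as stated, either fail or prove a strictly stronger, currently conjectural, result.
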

Note that there is a gap between the bounds 2(a) and 2(b) (for example, if $\alpha_i \sim \frac{1}{\sqrt[3]{i}}$, then $\left(\frac{\norm{\alpha}_2}{\norm{\alpha}_3}\right)^6$ is order of $\frac{d}{\ln^2(d)}$, while $\left(\frac{\norm{\alpha}_2}{\norm{\alpha}_4}\right)^4$ is about $d^{\frac{2}{3}}$). We conjecture that the bound 2(a) is tight:
\begin{conjecture} \label{conj}
	Let $p \in (0,1)$ be fixed and assume that $\left(\frac{\norm{\alpha}_2}{\norm{\alpha}_3}\right)^6/n^3 \to \infty$. Then $$\mathrm{TV}(G(n,p),G(n,p,\alpha)) \to 0$$ 
\end{conjecture}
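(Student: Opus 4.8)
The plan is to follow the information-theoretic route behind Theorem~\ref{thm: main}(b) and the entropic CLT of~\cite{BG15}, but to push the estimates to their natural third-moment limit.

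\emph{Reduction to a Wishart-type comparison.} Write $X_i=D_\alpha^{1/2}g_i$ with $g_1,\dots,g_n$ i.i.d.\ standard Gaussian in $\RR^d$, and set
$$W:=\big(\langle X_i,X_j\rangle\big)_{1\le i<j\le n}=\big(g_i^{\top}D_\alpha g_j\big)_{i<j}\in\RR^{\binom n2},\qquad \gamma:=\mathcal N\big(0,\norm{\alpha}_2^2\,\mathrm{I}_{\binom n2}\big).$$
The graph $G(n,p,\alpha)$ is obtained from $W$ by coordinatewise thresholding at $t_{p,\alpha}$; applying the same map to $\gamma$ produces $G(n,p')$ with $|p-p'|\lesssim\norm{\alpha}_3^3/\norm{\alpha}_2^3$ by the one-dimensional Berry--Esseen theorem applied to $\langle X_1,X_2\rangle=\sum_k\alpha_kg_{1k}g_{2k}$. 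The hypothesis gives $n\,\norm{\alpha}_3^3/\norm{\alpha}_2^3=n\big/(\norm{\alpha}_2/\norm{\alpha}_3)^3\to0$, so $\mathrm{TV}(G(n,p),G(n,p'))\to0$, and by the data-processing inequality it suffices to prove
$$\mathrm{TV}\big(\mathcal L(W),\gamma\big)\longrightarrow 0.$$
Writing $W=\sum_{k=1}^{d}\alpha_kZ_k$ with $Z_k:=\big(g^{(k)}_ig^{(k)}_j\big)_{i<j}$ i.i.d.\ second-order Gaussian-chaos vectors of identity covariance, this is an entropic/total-variation CLT for a weighted sum of independent chaoses.

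\emph{Identifying the governing quantity.} Each $Z_k$ is centered with covariance exactly $\mathrm{I}_{\binom n2}$, so the first two cumulants of $\norm{\alpha}_2^{-1}W$ already match those of $\gamma$, and the obstruction to Gaussianity is carried by the higher cumulants. The third cumulant tensor of $Z_1$ is supported precisely on triples of edges $\{i,j\},\{j,k\},\{k,i\}$ forming a triangle, with entries $\EE[g_ig_j\cdot g_jg_k\cdot g_kg_i]=1$; hence $\norm{\kappa_3(Z_1)}^2\asymp n^3$, which is exactly the ``signed triangle'' structure that powers Theorem~\ref{thm: main}(a). By additivity of cumulants $\kappa_3\big(\norm{\alpha}_2^{-1}W\big)=(\norm{\alpha}_3/\norm{\alpha}_2)^3\,\kappa_3(Z_1)$, so
$$\norm{\kappa_3\big(\norm{\alpha}_2^{-1}W\big)}^2\;\asymp\;n^3\,(\norm{\alpha}_3/\norm{\alpha}_2)^6\;=\;n^3\big/\big(\norm{\alpha}_2/\norm{\alpha}_3\big)^6 .$$
The same phenomenon is visible on the Fourier side: writing the characteristic-function variable as a symmetric matrix $A=(\theta_{jk})$ with zero diagonal, a Gaussian-integral computation gives $\widehat{\mathcal L(W)}(A)=\prod_{k,l}\big(1-i\alpha_k\lambda_l(A)\big)^{-1/2}$, where $\lambda_l(A)$ are the eigenvalues of $A$ and $\sum_l\lambda_l(A)=\mathrm{tr}(A)=0$, so that expanding the logarithm
$$\log\widehat{\mathcal L(W)}(A)-\log\widehat{\gamma}(A)\;=\;-\tfrac i6\,\norm{\alpha}_3^3\,\mathrm{tr}(A^3)\;+\;\tfrac18\,\norm{\alpha}_4^4\,\mathrm{tr}(A^4)\;-\;\cdots,$$
with $\mathrm{tr}(A^3)$ being exactly the Fourier ``signed-triangle'' form. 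Thus the conjecture amounts to showing that the cubic term dominates throughout the effective region $\norm{A}_F\lesssim\norm{\alpha}_2^{-1}$, equivalently that $D\big(\mathcal L(W)\,\big\|\,\gamma\big)$ is controlled by $\norm{\kappa_3(\norm{\alpha}_2^{-1}W)}^2$; by Pinsker's inequality this yields $\mathrm{TV}(\mathcal L(W),\gamma)^2\lesssim n^3\big/(\norm{\alpha}_2/\norm{\alpha}_3)^6\to0$. (In the isotropic case $\alpha=(1,\dots,1)$ this predicts $D\lesssim n^3/d$, recovering Theorem~\ref{thm: BDER}(b).)

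\emph{Execution, and the main obstacle.} I would estimate $D\big(\mathcal L(W)\,\big\|\,\gamma\big)$ through a third-order Edgeworth-type correction: approximate the density of $\norm{\alpha}_2^{-1}W$ relative to $\gamma$ by $1+\tfrac16\langle\kappa_3,H_3\rangle$ (with $H_3$ the third Hermite tensor), note that the first-order term integrates to zero and the quadratic contribution to the divergence is $\asymp\norm{\kappa_3}^2$ --- a \emph{finite} quantity, since Hermite tensors against a Gaussian are polynomial --- and then bound the remainder. The difficulty, and the reason this stays a conjecture, is precisely the remainder: the $\chi^2$-divergence between a single chaos vector $Z_k$ and its Gaussian surrogate is \emph{infinite} (chaoses have only sub-exponential, not sub-Gaussian, tails), so the naive per-column second-order estimate is unavailable; carried out crudely --- as in the proof of Theorem~\ref{thm: main}(b) --- one is forced into a remainder scaling like $(\alpha_k/\norm{\alpha}_2)^4\cdot n^3$ per column, summing to $n^3\big/(\norm{\alpha}_2/\norm{\alpha}_4)^4$, exactly the weaker fourth-power exponent. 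To close the gap one must regularize --- for instance by grouping the columns $g^{(1)},\dots,g^{(d)}$ into blocks large enough that a local-CLT smoothing makes each block's increment genuinely comparable (in a tail-robust, relative-entropy sense) to a Gaussian, yet small enough that the cubic Edgeworth term still dominates --- and then prove that \emph{every} fourth- and higher-order remainder, including the $\norm{\alpha}_4^4\,\mathrm{tr}(A^4)$-type terms above, is uniformly dominated by the cubic one over the effective Fourier region. Establishing this domination without the crutch of a finite $\chi^2$ is the crux of the problem.
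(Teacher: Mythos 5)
The statement you are proving is Conjecture~\ref{conj}, which the paper itself leaves open: there is no proof of it anywhere in the text, and the authors explicitly present Theorem~\ref{thm: main}(b), with the weaker exponent $\left(\norm{\alpha}_2/\norm{\alpha}_4\right)^4$, as the best they can establish. Your proposal does not close that gap. The reductions you carry out are sound and essentially reproduce machinery already in the paper: the thresholding/data-processing reduction to comparing $\mathcal{L}(W)$ with a Gaussian is the setup of Section 5, the Berry--Esseen control of $|p-p'|$ is Lemma~\ref{lem: tp} / Lemma~\ref{lem: remainder bound}, and your identification of the third cumulant tensor (supported on triangles, with $\norm{\kappa_3}^2\asymp n^3(\norm{\alpha}_3/\norm{\alpha}_2)^6$) as the natural governing quantity is a correct and genuinely useful diagnosis of \emph{why} the conjectured threshold should be $\left(\norm{\alpha}_2/\norm{\alpha}_3\right)^6/n^3$ --- it is the Fourier-side twin of the signed-triangle statistic that drives the upper bound.

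The genuine gap is the one you name yourself in the final paragraph: you never prove that $\mathrm{Ent}\left[W\,||\,\gamma\right]$ (or the total variation) is actually dominated by $\norm{\kappa_3}^2$. The chain-rule argument of Lemma~\ref{lem: ln bound} conditions on $\mathbb{Y}$ and computes a Gaussian-vs-Gaussian relative entropy, which is intrinsically quadratic in $\mathrm{I}_n - \mathbb{Y}D_\alpha\mathbb{Y}^T/\norm{\alpha}_2^2$ and therefore produces fourth-moment quantities $\norm{\alpha}_4^4/\norm{\alpha}_2^4$; your Edgeworth alternative requires controlling the remainder beyond the cubic term uniformly over the effective Fourier region, and as you observe, each chaos summand has infinite $\chi^2$-divergence from its Gaussian surrogate, so no off-the-shelf second-order (Poincar\'e/$\chi^2$) bound applies. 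The proposed fix --- blocking the columns and invoking a local CLT to regularize before expanding --- is stated as a plan, not executed, and it is precisely where the difficulty lives: one must show that after blocking, the fourth- and higher-order corrections (the $\norm{\alpha}_4^4\,\mathrm{tr}(A^4)$ terms and worse) are dominated by the cubic one, which is not automatic since $\norm{\alpha}_4^4/\norm{\alpha}_2^4$ can be much larger than $\left(\norm{\alpha}_3^3/\norm{\alpha}_2^3\right)^{4/3}$ for skewed spectra. As it stands, your text is a credible research program for attacking an open problem, with the decisive step explicitly missing; it is not a proof.
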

In the following we describe some of the ideas used to prove Theorem \ref{thm: main}.\\

As discussed, the main idea underlying this work has to do with counting triangles. Given a graph $G$ we denote by $T(G)$ the number of triangles in the graph. It is easy to verify that $\EE\ T(G(n,p)) = \binom{n}{3}p^3$ and $\mathrm{Var}(T(G(n,p)))$ is of order $n^4$. In the isotropic case, standard calculations show that the expected number of triangles in $G(n,p,d)$ is boosted by a factor proportional to $1+ \frac{1}{\sqrt{d}}$. The first difficulty that arises is to find a precise estimate for the probability increment in the non-isotropic case. In this case, we show that there is a constant $\delta_p$ depending only on $p$ such that $\EE\ T(G(n,p,\alpha)) \geq \binom{n}{3}p^3\left(1+ \delta_p\left(\frac{\norm{\alpha}_3}{\norm{\alpha}_2}\right)^3\right)$. This would imply a non-negligible total variation distance as long as $\binom{n}{3}\left(\frac{\norm{\alpha}_3}{\norm{\alpha}_2}\right)^3$ is bigger than the standard deviation of $T(G(n,p))$. We incorporate the idea of using {\it signed triangles} which attain a similar difference between expected values but have a smaller variance. The number of {\it signed triangles} is defined as:
$$\tau(G) =  \sum\limits_{\{i,j,k\}\in\binom{[n]}{3}}(A_{i,j}-p)(A_{i,k}-p)(A_{j,k}-p),$$
where $A$ is the adjacency matrix of $G$, which is proportional to $\mathrm{Tr}((A-p\mathbf{1})^3)$. It is known that $\mathrm{Var}(\tau(G(n,p)))$ is only of order $n^3$. Resolving the value of $\mathrm{Var}(\tau(G(n,p,\alpha)))$ leads to the following result (which implies Theorem \ref{thm: main}(a)):
\begin{theorem} \label{thm: signed triangles}
	Let $p \in (0,1)$ be fixed and assume that $\left(\frac{\norm{\alpha}_2}{\norm{\alpha}_3}\right)^6/n^3 \to 0$. Then $$\mathrm{TV}(\tau(G(n,p)),\tau(G(n,p,\alpha))) \to 1$$ 
\end{theorem}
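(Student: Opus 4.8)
The plan is to separate the distributions of $\tau$ under $H_0$ and $H_1$ by comparing $\tau$ to a threshold and applying Chebyshev's inequality under each hypothesis. Put $\rho:=\left(\frac{\norm{\alpha}_3}{\norm{\alpha}_2}\right)^3$, and note that the hypothesis $\left(\frac{\norm{\alpha}_2}{\norm{\alpha}_3}\right)^6/n^3\to0$ is exactly $n^3\rho^2\to\infty$. I would need three inputs. \emph{(i)} Under $H_0$ the edges are independent, so at once $\EE[\tau(G(n,p))]=0$ and $\mathrm{Var}(\tau(G(n,p)))=\binom n3(p(1-p))^3=\Theta(n^3)$. \emph{(ii)} A mean-gap bound $\EE[\tau(G(n,p,\alpha))]\ge c_p\binom n3\rho$ for a constant $c_p=c_p(p)>0$: this is the signed-triangle form of the edge-interaction estimate already quoted (the one giving $\EE[T(G(n,p,\alpha))]\ge\binom n3 p^3(1+\delta_p\rho)$). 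Indeed the only non-vanishing third mixed cumulant of $\big(\langle X_1,X_2\rangle,\langle X_1,X_3\rangle,\langle X_2,X_3\rangle\big)$ is $\EE[\langle X_1,X_2\rangle\langle X_1,X_3\rangle\langle X_2,X_3\rangle]=\norm{\alpha}_3^3$, so after normalising by $\norm{\alpha}_2^3$ the joint law differs from the standard Gaussian, to leading order, by a term of size $\rho$, positive because $\alpha\ge0$; and $\EE[(A_{12}-p)(A_{13}-p)(A_{23}-p)]$ isolates exactly this term (an Edgeworth expansion pins down $c_p$). \emph{(iii)} A variance bound $\mathrm{Var}(\tau(G(n,p,\alpha)))=o(n^6\rho^2)$. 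Granting these, take $s:=\tfrac12\EE[\tau(G(n,p,\alpha))]=\Theta(n^3\rho)$: then $\PP_{H_0}(\tau>s)\le \mathrm{Var}(\tau(G(n,p)))/s^2=\Theta\!\big(1/(n^3\rho^2)\big)\to0$ and $\PP_{H_1}(\tau\le s)\le 4\,\mathrm{Var}(\tau(G(n,p,\alpha)))/\EE[\tau(G(n,p,\alpha))]^2\to0$, so $\mathrm{TV}(\tau(G(n,p)),\tau(G(n,p,\alpha)))\ge \PP_{H_1}(\tau>s)-\PP_{H_0}(\tau>s)\to1$.

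The heart of the matter is \emph{(iii)}. Writing $Y_T:=\prod_{\{a,b\}\subset T}(A_{ab}-p)$ for a triple $T$, one has $\mathrm{Var}(\tau(G(n,p,\alpha)))=\sum_{T,T'}\mathrm{Cov}(Y_T,Y_{T'})$, which I would split by $m:=|T\cap T'|$. If $m=0$ the two triangles depend on disjoint (hence independent) sets of points, so the covariance vanishes. If $m=1$, with shared vertex $i$, conditioning on $X_i$ makes the two triangles independent, so $\mathrm{Cov}(Y_T,Y_{T'})=\mathrm{Var}\!\big(\EE[Y_T\mid X_i]\big)$; a Hermite expansion of the threshold functions shows $\EE[Y_T\mid X_i]$ equals its mean $\Theta(\rho)$ plus an $L^2$-fluctuation of order $\rho$, so this covariance is $O(\rho^2)$ and the $m\le1$ block contributes $O(n^5\rho^2)$. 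If $m=3$ one recovers exactly $\binom n3(p(1-p))^3=\Theta(n^3)$, as under $H_0$.

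The dominant new contribution is $m=2$. For $T=\{i,j,k\}$, $T'=\{i,j,k'\}$, conditioning on $(X_i,X_j)$ makes the $X_k$-contribution and the $X_{k'}$-contribution independent, so $\EE[Y_TY_{T'}]=\EE\big[(A_{ij}-p)^2\,\psi(X_i,X_j)^2\big]$, where $\psi(X_i,X_j):=\EE[(A_{ik}-p)(A_{jk}-p)\mid X_i,X_j]$ is the covariance of two threshold events applied to the pair $\big(\langle X_i,X_k\rangle,\langle X_j,X_k\rangle\big)$, which is jointly Gaussian in $X_k$ with correlation coefficient $r(X_i,X_j)=\dfrac{\sum_m\alpha_m (X_i)_m(X_j)_m}{\sqrt{\big(\sum_m\alpha_m (X_i)_m^2\big)\big(\sum_m\alpha_m(X_j)_m^2\big)}}$. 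By Price's theorem $\psi$ is linear in $r$ near $r=0$ (indeed $|\psi|\le p(1-p)|r|$ pointwise), so $\mathrm{Cov}(Y_T,Y_{T'})=O\!\big(\EE[r(X_i,X_j)^2]\big)$, and a moment computation (with a concentration bound for $\sum_m\alpha_m(X_i)_m^2$ to handle the denominators) gives $\EE[r(X_i,X_j)^2]=O\!\big(\norm{\alpha}_4^4/\norm{\alpha}_2^4\big)$; summing over the $\Theta(n^4)$ such pairs, this block contributes $O\!\big(n^4(\norm{\alpha}_4/\norm{\alpha}_2)^4\big)$. Collecting everything, $\mathrm{Var}(\tau(G(n,p,\alpha)))=O\!\big(n^3+n^4(\norm{\alpha}_4/\norm{\alpha}_2)^4+n^5\rho^2\big)$. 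Now $\norm{\alpha}_4\le\norm{\alpha}_3$ gives $(\norm{\alpha}_4/\norm{\alpha}_2)^4\le\rho^{4/3}$, and $n^3\rho^2\to\infty$ forces $n^3\rho\to\infty$ (since $\rho\le1$); hence $n^4(\norm{\alpha}_4/\norm{\alpha}_2)^4\le n^4\rho^{4/3}=n^6\rho^2\,(n^3\rho)^{-2/3}=o(n^6\rho^2)$, while $n^3=o(n^6\rho^2)$ and $n^5\rho^2=o(n^6\rho^2)$, so $\mathrm{Var}(\tau(G(n,p,\alpha)))=o(n^6\rho^2)=o\!\big(\EE[\tau(G(n,p,\alpha))]^2\big)$, establishing \emph{(iii)}.

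I expect the main obstacle to be the $m=2$ estimate: one must show, uniformly over the model, both that the bivariate orthant probability is genuinely linear --- not merely Lipschitz --- in the correlation coefficient near $0$, so that the $m=2$ block is governed by $\EE[r^2]$ rather than $\EE[|r|]$, and that $\EE[r(X_i,X_j)^2]$ is of order $(\norm{\alpha}_4/\norm{\alpha}_2)^4$ (the matching lower bound here is precisely what forces the exponent in Theorem \ref{thm: main}(b)). Secondary technical points are the uniform control of the Hermite tails in the $m\le1$ block, and, for the constant $c_p$ in \emph{(ii)}, handling the fluctuations of the conditional variance $\sum_m\alpha_m(X_i)_m^2$ of the inner products (which is constant in the isotropic case but not here); these are the places where the characteristic-function estimates announced in the introduction are needed.
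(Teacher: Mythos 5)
Your proposal is correct in outline and follows the same overall strategy as the paper: a mean--variance separation argument on the signed-triangle statistic, with the mean gap $\EE[\tau(G(n,p,\alpha))]\gtrsim\binom{n}{3}\rho$ imported from the triangle-probability estimate and the conclusion via Chebyshev on both sides of the threshold $\tfrac12\EE[\tau(G(n,p,\alpha))]$. Where you genuinely diverge is in the key covariance estimate for two triangles sharing an edge. The paper's Lemma~\ref{lem: covariance} conditions on the Bernoulli variable $A_{1,2}$, asserts conditional independence of $\tau(1,2,3)$ and $\tau(1,2,4)$, and thereby expresses the covariance as a $p$-dependent multiple of $\left(\EE A_{1,2}A_{1,3}A_{2,3}-p^3\right)^2$, which it then controls by the \emph{upper} bound of Theorem~\ref{thm: prob estimation} (proved via multivariate Berry--Esseen), yielding $O(\rho^2)$ per pair and an $m=2$ block of $O(n^4\rho^2)$. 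You instead condition on the shared Gaussian points $(X_i,X_j)$ --- which is in fact the conditioning that literally yields conditional independence of the two remaining vertices, whereas conditioning on the coarser $\sigma$-algebra of $A_{1,2}$ alone does not --- and bound the conditional covariance of the two threshold events by the correlation coefficient of an exactly bivariate-normal pair, getting $O\bigl((\norm{\alpha}_4/\norm{\alpha}_2)^4\bigr)$ per pair and closing the gap to $o(n^6\rho^2)$ via $\norm{\alpha}_4\le\norm{\alpha}_3$ and $n^3\rho\to\infty$. This buys you a route that avoids the multivariate Berry--Esseen upper bound entirely and is more transparent about where the dependence actually lives; the cost is that you must control the fluctuation of the conditional marginals $\PP(A_{ik}=1\mid X_i)$ around $p$ (driven by the fluctuation of $\sum_m\alpha_m(X_i)_m^2$ around $\norm{\alpha}_2^2$), since $\psi$ is not only the conditional covariance of the two indicators but also contains the product of these marginal deviations; you flag this but do not carry it out (it works: that term is $O\bigl((\norm{\alpha}_4/\norm{\alpha}_2)^8\bigr)$ and is lower order). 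You also retain and bound the $|T\cap T'|=1$ block, which the paper dismisses as independent; your $O(\rho^2)$ per pair there is optimistic, but even the weaker bound $O\bigl((\norm{\alpha}_4/\norm{\alpha}_2)^4\bigr)\le\rho^{4/3}$ gives $n^5\rho^{4/3}=o(n^6\rho^2)$ under the hypothesis, so the block is harmless either way.

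The one place where your write-up substantially understates the work is input \emph{(ii)}: the lower bound $\EE A_{1,2}A_{1,3}A_{2,3}-p^3\ge\delta_p\rho$ is the technical heart of the matter, and ``an Edgeworth expansion pins down $c_p$'' compresses what is, in the paper, the entirety of Section~3 --- the characteristic-function formula for the joint law of the three inner products, the splitting of the frequency domain into balls and shells, and the comparison with the coordinate-independent surrogate needed to show the remainder is $O(\rho^{1+\eps})$ rather than merely $O(\rho)$. Since Theorem~\ref{thm: prob estimation} is stated and proved separately in the paper, citing it as an input is legitimate; just be aware that the heuristic via the third mixed cumulant is not itself a proof that the correction has a sign and a matching order uniformly in $\alpha$.
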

To prove Theorem \ref{thm: main}(b) we may view the random graph $G(n,p,\alpha)$ as a measurable function of a random $n \times n$ matrix $W(n,\alpha)$ with entries proportional to $\left\langle\gamma_i, \gamma_j \right\rangle$ where $\gamma_i$ are drawn {\it i.i.d} from $\mathcal{N}(0, D_\alpha)$ and $D_\alpha = \mathrm{diag}(\alpha)$. Similarly, $G(n,p)$ can be viewed as a function of an $n\times n$ GOE random matrix denoted by $M(n)$. In ~\cite{BDER14} Theorem \ref{thm: BDER}(b) was proven using direct calculations on the densities of the involved distributions. However, in our case, no simple formula exists, which makes their method inapplicable. The premise is instead proven using information theoretic tools, adopting ideas from \cite{BG15}. The main idea is to use Pinsker's inequality to bound the total variation distance by the respective relative entropy. Thus we are interested in
$$\mathrm{Ent}\left[W(n,\alpha)||M(n)\right].$$
Theorem \ref{thm: main}(b) will then follow from the next result:
\begin{theorem} \label{thm: entropy}
	Let $p \in (0,1)$ be fixed and assume that $\left(\frac{\norm{\alpha}_2}{\norm{\alpha}_4}\right)^4/n^3 \to \infty$. Then $$\mathrm{Ent}\left[W(n,\alpha)||M(n)\right] \to 0.$$ 
\end{theorem}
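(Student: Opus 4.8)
The plan is to estimate $\mathrm{Ent}[W(n,\alpha)\,\|\,M(n)]$ by revealing the matrix $W=W(n,\alpha)$ one vertex at a time and invoking the chain rule for relative entropy. Write $\gamma_1,\dots,\gamma_n$ for the i.i.d.\ $\cN(0,D_\alpha)$ points underlying $W$, so that $W_{ij}=\langle\gamma_i,\gamma_j\rangle/\norm{\alpha}_2$; it is harmless to take both $W$ and $M=M(n)$ hollow (this is all the induced graphs see), and then a direct moment computation shows that the off-diagonal entries of $W$ are uncorrelated and of unit variance, matching the covariance of $M$. Let $X_i:=(W_{ij})_{1\le j<i}$ be the block of edges joining vertex $i$ to the earlier ones, and let $P,Q$ denote the laws of $W,M$. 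The chain rule gives
$$\mathrm{Ent}[W\,\|\,M]=\sum_{i=2}^{n}\EE_{P}\Big[\mathrm{Ent}\big(P_{X_i\mid X_{<i}}\,\big\|\,Q_{X_i\mid X_{<i}}\big)\Big],$$
and under $Q$ the block $X_i$ is $\cN(0,I_{i-1})$, independent of $X_{<i}$. Under $P$, conditioned on $\gamma_1,\dots,\gamma_{i-1}$ the block $X_i$ is a linear image of the fresh Gaussian $\gamma_i$, hence $\cN(0,\Sigma^{(i)})$ with the random covariance $\Sigma^{(i)}_{j\ell}=\norm{\alpha}_2^{-2}\,\gamma_j^{\top}D_\alpha\gamma_\ell$. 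Since $X_{<i}$ is a function of $(\gamma_1,\dots,\gamma_{i-1})$ while the $Q$-conditional law is deterministic, convexity of relative entropy in its first argument yields
$$\mathrm{Ent}[W\,\|\,M]\;\le\;\sum_{i=2}^{n}\EE\Big[\mathrm{Ent}\big(\cN(0,\Sigma^{(i)})\,\big\|\,\cN(0,I_{i-1})\big)\Big]\;=\;\tfrac12\sum_{i=2}^{n}\EE\sum_{a}\phi\big(\mu_a(\Sigma^{(i)})\big),$$
where $\phi(x)=x-1-\log x\ge0$ and $\mu_a(\Sigma^{(i)})$ are the eigenvalues of $\Sigma^{(i)}$.

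To estimate the right-hand side I will use that $\Sigma^{(i)}$ is a weighted Wishart matrix: putting $\gamma_{jk}=\sqrt{\alpha_k}\,z_{jk}$ with $z_{jk}$ i.i.d.\ standard Gaussian and $\lambda_k:=\alpha_k/\norm{\alpha}_2$ (so $\sum_k\lambda_k^2=1$), one has $\Sigma^{(i)}=\sum_{k=1}^d\lambda_k^2\,\zeta_k\zeta_k^{\top}$ with $\zeta_k=(z_{1k},\dots,z_{i-1,k})$ i.i.d.\ $\cN(0,I_{i-1})$; in particular $\EE\,\Sigma^{(i)}=I_{i-1}$ and an elementary computation gives
$$\EE\,\norm{\Sigma^{(i)}-I_{i-1}}_F^2=(i-1)i\sum_{k=1}^d\lambda_k^4=(i-1)i\,\Big(\tfrac{\norm{\alpha}_4}{\norm{\alpha}_2}\Big)^{4}.$$
Since $\phi(x)\le C(x-1)^2$ for $x\ge\tfrac12$, on the event $\cE_i:=\{\norm{\Sigma^{(i)}-I_{i-1}}_{\mathrm{op}}\le\tfrac12\}$ we get $\mathrm{Ent}\big(\cN(0,\Sigma^{(i)})\,\|\,\cN(0,I_{i-1})\big)\le C\norm{\Sigma^{(i)}-I_{i-1}}_F^2$; summing the previous display over $i\le n$ bounds the contribution of the events $\cE_i$ by $O\!\big(n^3(\norm{\alpha}_4/\norm{\alpha}_2)^4\big)=O\!\big(n^3/(\norm{\alpha}_2/\norm{\alpha}_4)^4\big)$, which tends to $0$ under our hypothesis.

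The main obstacle will be the contribution of the complementary events $\cE_i^c$, on which the $-\log\det\Sigma^{(i)}$ term in the Gaussian relative entropy is not quadratically controlled and blows up when $\Sigma^{(i)}$ has a tiny eigenvalue. The hypothesis rescues this: by Cauchy--Schwarz $(\norm{\alpha}_2/\norm{\alpha}_4)^4\le d$, so $d\gg n^3$, and $\Sigma^{(i)}$ is a weighted Wishart matrix whose effective sample size $\big(\sum_k\lambda_k^4\big)^{-1}=(\norm{\alpha}_2/\norm{\alpha}_4)^4$ exceeds $n\ge i-1$ by a huge factor. Consequently $\Sigma^{(i)}$ is a.s.\ nonsingular, standard non-asymptotic bounds for the extreme eigenvalues of (weighted) Wishart matrices make $\PP(\cE_i^c)$ super-polynomially small in $n$, and the left tail of $\mu_{\min}(\Sigma^{(i)})$ keeps $\EE[\mathbf{1}_{\cE_i^c}\log(1/\mu_{\min}(\Sigma^{(i)}))]$ and $\EE[\mathbf{1}_{\cE_i^c}\,\mathrm{Tr}\,\Sigma^{(i)}]$ under control; a Cauchy--Schwarz estimate on $\EE\big[\mathbf{1}_{\cE_i^c}\,\mathrm{Ent}\big(\cN(0,\Sigma^{(i)})\,\|\,\cN(0,I_{i-1})\big)\big]$ then shows the $\cE_i^c$ terms are $o(1)$ as well. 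Putting the two parts together yields $\mathrm{Ent}[W(n,\alpha)\,\|\,M(n)]\to0$, as claimed; combined with Pinsker's inequality and the data-processing inequality this is exactly what the deduction of Theorem~\ref{thm: main}(b) requires.
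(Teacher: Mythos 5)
Your proposal is correct and follows essentially the same route as the paper: chain rule over vertices, conditioning on the underlying Gaussian points so that each conditional law is $\mathcal{N}(0,\Sigma^{(i)})$ with $\Sigma^{(i)}=\mathbb{Y}D_\alpha\mathbb{Y}^T/\norm{\alpha}_2^2$, the Gaussian relative-entropy formula, a second-moment (Frobenius) computation on the event that the spectrum stays near $1$ yielding the $n^3(\norm{\alpha}_4/\norm{\alpha}_2)^4$ bound, and tail estimates on the complementary event. The one place you defer to ``standard non-asymptotic bounds'' is exactly where the paper invests its technical effort: since $-\log\mu_{\min}$ is unbounded, a small probability for $\mathcal{E}_i^c$ alone does not suffice, and the paper controls $\int_0^{\xi}s^{-1}\PP(\mu_{\min}<s)\,ds$ via $\eps$-nets, a Bernstein bound for $\mu_{\max}$, and Lata{\l}a's small-ball inequality $\PP\bigl(\norm{D_\alpha Z}<t\norm{D_\alpha}_{HS}\bigr)\le t^{c\norm{\alpha}_2^2}$, using that $\norm{\alpha}_2^2\ge\bigl(\norm{\alpha}_2/\norm{\alpha}_4\bigr)^2\gg n$ — so the ingredients exist, but you would need to supply this small-ball estimate to close the argument.
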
 
We suspect, as stated in Conjecture \ref{conj}, that Theorem \ref{thm: main}(b) does not give a tight characterization of the lower bound. Indeed, in the dense regime of the isotropic case, {\it signed triangles} act as an optimal statistic. It would seem to reason that deforming the sphere shouldn't affect the utility of such a local tool.

\paragraph{Acknowledgments:}We would like to thank the anonymous referee for carefully reading this paper and for the thoughtful comments which helped improve the overall presentation.

\section{Preliminaries}
We work in $\RR^n$, equipped with the standard Euclidean structure $\langle \cdot, \cdot \rangle$. For $q \geq 1$, we denote by $\norm{\cdot}_q$ the corresponding $q$-norm. That is, for $(v_1,...,v_n)=v \in \RR^n$, $\norm{v}_q = \left(\sum\limits_{i=1}^n |v_i|^q\right)^{\frac{1}{q}}$. If $\alpha = \{\alpha_i\}_{i=1}^d$ is a multi-set with elements from $\RR$, we adopt the same notation for $\norm{\alpha}_q$. We abbreviate $\norm{\cdot}:=\norm{\cdot}_2 $, the usual Euclidean norm and denote by $\Sph$ the unit sphere under this norm. In our proofs, we will allow ourselves to use the letters $c, C,c',C', c_1,C_1$, etc. to denote absolute positive constants whose values may change between appearances. The letters $x,y,z$ will usually denote spatial variables while $a,b,c$ will denote the corresponding frequencies in the Fourier domain. The letters $X,Y,Z$ will usually be used as random variables and vectors. The imaginary unit will be denoted as $\mathrm{\bf{i}}$.\\
\\
Let $X$ be a real valued random variable. The characteristic function of $X$ is a function $\varphi_X: \RR \to \RR$, given by
$$\varphi_X(t) = \EE[e^{\mathrm{\bf{i}}tX}].$$
More generally, if $X$ is an $n$-dimensional random vector, then the characteristic function of $X$ is a function $\varphi_X: \RR^n \to \RR$ given by  
$$\varphi_X(t) = \EE[e^{\mathrm{\bf{i}}\langle t,X\rangle}].$$
By elementary Fourier analysis, one can use the characteristic function to recover the distribution, whenever the random vector is integrable. We will be interested in the specific case where the dimension of $X$ is $3$. Assume $X = (X^1,X^2,X^3)$ has a density, denoted by $f$, a characteristic function, denoted by $\varphi$ and (reflected) cumulative distribution function
$$F(t_1,t_2,t_3) = \PP(X^1>t_1, X^2>t_2, X^3>t_3),$$
with marginals onto the first 1 or 2 coordinates denoted as $F(t_1,t_2)$ and $F(t_1)$ respectively (remark that this is slightly a different definition than the usual notion of the cumulative distribution function, which results in a change of sign for the next identity). Then e.g., \cite[Theorem 5]{S91} states that
\begin{align} \label{invere char function}
\frac{\mathrm{\bf{i}}}{\pi^3}\mint{\times}\limits_{\RR^3}&\frac{\varphi(a,b,c)e^{-\mathrm{\bf{i}}(at_1+bt_2+ct_3)}}{abc}dadbdc =\\ &8F(t_1,t_2,t_3)-4(F(t_1,t_2)+F(t_2,t_3)+F(t_1,t_3))+2(F(t_1)+F(t_2)+F(t_3))-1, \nonumber
\end{align}
where the integral is taken as a Cauchy principal value; In $\RR^3$, the Cauchy principal value of a function $g$, which we henceforth denote by $\mint{\times}\limits_{\RR^3}g$, is defined as
$$\int\limits_0^{\infty}\int\limits_0^{\infty}\int\limits_0^{\infty}\Delta_c\Delta_b\Delta_a g(a,b,c)dadbdc,$$
where $\Delta_ag(a,b,c) := g(a,b,c)+g(-a,b,c)$ and likewise for $b,c$. In the following, for multivariate functions, we interpret the definition of an \emph{odd} (resp. \emph{even}) function in the following sense: $g$ is odd (resp. even) if it is antisymmetric (resp. symmetric) under change of sign of any coordinate, while keeping the values of the rest of the coordinates intact. We note that the principal value of an odd function vanishes, and if $g$ is integrable then $\mint{\times}\limits_{\RR^3}g = \int\limits_{\RR^3}g$. Furthermore, by denoting
$$\mathrm{sgn}_{(t_1,t_2,t_3)}(x,y,z) = \mathrm{sgn}(x-t_1)\mathrm{sgn}(y-t_2)\mathrm{sgn}(z-t_3),$$
a simple calculation shows the following equality:
\begin{align*}
\int\limits_{\RR^3}&f(x,y,z)\cdot\mathrm{sgn}_{(t_1,t_2,t_3)}(x,y,z)dxdydz = \\
&8F(t_1,t_2,t_3)-4(F(t_1,t_2)+F(t_2,t_3)+F(t_1,t_3))+2(F(t_1)+F(t_2)+F(t_3))-1.
\end{align*}
Since the Fourier transform is an isometry we have that  
\begin{equation} \label{Fourier isometry}
\int\limits_{\RR^3}f\cdot\mathrm{sgn}_{(t_1,t_2,t_3)} =\frac{1}{\pi^3} \mint{\times}\limits_{\RR^3}\varphi\cdot\widehat{\mathrm{sgn}}_{(t_1,t_2,t_3)}, 
\end{equation}
where $\widehat{\mathrm{sgn}}_{(t_1,t_2,t_3)}$ is the Fourier transform of $\mathrm{sgn}_{(t_1,t_2,t_3)}$, when considered as a tempered distribution (for more information on the topic, see \cite{hormander2015analysis}).
Putting all of the above together yields
\begin{equation} \label{shifted sign}
\widehat{\mathrm{sgn}}_{(t_1,t_2,t_3)}(a,b,c)= \frac{\mathrm{\bf{i}}e^{-\mathrm{\bf{i}}(at_1+bt_2+ct_3)}}{abc}.
\end{equation}
\\
For a positive semi-definite $n \times n$ matrix $\Sigma$, we denote by $\mathcal{N}(0,\Sigma)$ the law of the centered Gaussian distribution with covariance $\Sigma$. If $X \sim \mathcal{N}(0,\Sigma)$ then $X^TX$ has the law $\mathcal{W}_n(\Sigma, 1)$ of the Wishart distribution with $1$ degree of freedom. The characteristic function of $X^TX$ is known (see ~\cite{eaton}) and given by
\begin{equation} \label{char of wishart}
\Theta \to \det\left(\mathrm{I} - 2\mathrm{\bf{i}}\Theta\Sigma\right)^{-\frac{1}{2}}.
\end{equation}
If $Z$ is distributed as a standard Gaussian random variable, then $Z^2$ has the $\chi^2$ distribution with $1$ degree of freedom. For such a distribution, we have $\EE[\chi^2] = 1$ and $\mathrm{Var}(\chi^2)=2$. The $\chi^2$ distribution has a sub-exponential tail which may be bounded using a Bernstein's type inequality (~\cite{V12}), in the following way. If $\{\chi^2_i\}_{i=1}^n$, are independent $\chi^2$ random variables, then for every $(v_1,...,v_n) = v \in \RR^n$ and every $t>0$
\begin{equation} \label{berenstein type}
\PP\left(\left|\sum v_i\chi_i^2-\sum v_i\right|\geq t\right) \leq 2\exp\left(-\min\left(\frac{t}{2\norm{v}_{\infty}},\frac{t^2}{4\norm{v}^2_{2}}\right)\right).
\end{equation}
Let $X_1,...,X_n$ be independent random variables with $0$ mean and variance $\EE[X_i^2] = \sigma_i^2$. Define $s_n = \sqrt{\sum\limits_{i=1}^n \sigma_i^2}$ and $S_n = \sum\limits_{i=1}^n\frac{X_i}{s_n}$. Under appropriate regularity conditions the central limit theorem states that $S_n$ converges in distribution to $\mathcal{N}(0,1)$.\\
\\
Berry-Esseen's inequality ~\cite{P95} quantifies this convergence. 
Suppose that the absolute third moments of $X_i$ exist and $\EE[|X_i|^3] = \rho_i$. If we denote by $Z$ a standard Gaussian and define $S_n$ as above then, for every $x \in \RR$,
\begin{equation} \label{prelim berry eseen}
|\PP(S_n < x) - \PP(Z<x)| \leq \frac{\sum\limits_{i=1}^n\rho_i}{s_n^3}.
\end{equation}
This can be generalized to higher dimensions, as found in ~\cite[Theorem 1.1]{bentkus2005lyapunov}. In that case assume $X_1,...,X_n$ are independent random vectors in $\RR^d$ and $S_n = \sum\limits_{i=1}^{n}X_i$ has covariance $\Sigma^2$. Assume that $\Sigma$ is invertible and denote $\EE[|\Sigma^{-1}X_i|^3]=\rho_i$. If $Z_d$ is a $d$-dimensional standard Gaussian vector, then there exists a universal constant $C_{\mathrm{be}}>0$, such that for any convex set $A$:
\begin{equation} \label{multi-berry-eseen}
|\PP(\Sigma^{-1}S_n \in A) - \PP(Z_d \in A)| \leq C_{\mathrm{be}}d^{\frac{1}{4}}\sum\limits_i\rho_i.
\end{equation}
For a random vector $X$ on $\RR^n$ with density $f$, the differential entropy of $X$ is defined
$$\mathrm{Ent}[X] =  -\int\limits_{\RR^n}f(x)\ln(f(x))dx.$$
If $Y$ is another random vector with density $g$, the relative entropy of $X$ with respect to $Y$ is 
$$\mathrm{Ent}[X||Y] = \int\limits_{\RR^n}f(x)\ln\left(\frac{f(x)}{g(x)}\right)dx.$$
Pinsker's inequality connects between the relative entropy and the total variation distance,
\begin{equation} \label{pinsker}
\mathrm{TV}(X,Y) \leq \sqrt{\frac{1}{2}\mathrm{Ent}[X||Y]}.
\end{equation}
The chain rule for relative entropy states that for any random vectors $X_1,X_2,Y_1,Y_2$,
\begin{equation} \label{chain rule}
\mathrm{Ent}[(X_1,X_2)||(Y_1,Y_2)] = \mathrm{Ent}[X_1||Y_1]+\EE_{x\sim \lambda_1}\mathrm{Ent}[X_2|X_1=x||Y_2|Y_1 = x],
\end{equation}
where $\lambda_1$ is the marginal of $X_1$, and $X_2|X_1=x$ is the distribution of $X_2$ conditioned on the event $X_1 = x$ (similarly for $Y_2|Y_1 = x$).
\section{Estimates for a triangle in a random geometric graph}
In this section we derive a lower bound for the probability that an induced subgraph, of size 3, of a random geometric graph forms a triangle. This calculation is instrumental for the derivation of Theorem \ref{thm: main}(a). Using the notation of the introduction, let $X_1,X_2,X_3 \sim \mathcal{N}(0,D_{\alpha})$ be independent normal random vectors with coordinates $X_1^i,X_2^i,X_3^i$ for $1 \leq i \leq d$. We denote by $f$ the joint density of $(\langle X_1, X_2\rangle, \langle X_1, X_3\rangle,\langle X_2, X_3\rangle)$. Consider the event
$$E_p = \{\langle X_1, X_2\rangle \geq t_{p,\alpha},\langle X_1, X_3\rangle \geq t_{p,\alpha},\langle X_2, X_3\rangle \geq t_{p,\alpha}\},$$
that the corresponding vertices form a triangle in $G(n,p,\alpha)$.
The main result of this section is the following theorem.
\begin{theorem} \label{thm: prob estimation}
Let $p \in (0,1)$ and assume $\norm{\alpha}_\infty=1$. One has 
$$p^3+\Delta\left(\frac{\norm{\alpha}_3}{\norm{\alpha}_2}\right)^3 \geq \PP(E_p) \geq p^3+\delta_p\left(\frac{\norm{\alpha}_3}{\norm{\alpha}_2}\right)^3$$
whenever $\norm{\alpha}_2 > c_p$, for constants $\Delta,\delta_p,c_p >0$ which may depend only on $p$.
\end{theorem}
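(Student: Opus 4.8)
\emph{Sketch of the approach.} The plan is to evaluate $\PP(E_p)$ through the characteristic function of the normalised triple
$$Y \;:=\; \tfrac{1}{\norm{\alpha}_2}\big(\langle X_1,X_2\rangle,\ \langle X_1,X_3\rangle,\ \langle X_2,X_3\rangle\big),\qquad \tau \;:=\; \tfrac{t_{p,\alpha}}{\norm{\alpha}_2},$$
and to read off the term of order $\theta:=\big(\norm{\alpha}_3/\norm{\alpha}_2\big)^3$. The first step is a reduction to sign-correlations: using $\mathbbm{1}_{\{x\ge\tau\}}=\tfrac12(1+\mathrm{sgn}(x-\tau))$, expanding the product of the three indicators, using $\PP(Y_i\ge\tau)=p$ (which is the definition of $t_{p,\alpha}$) and that permuting $X_1,X_2,X_3$ permutes the coordinates of $Y$ (so $Y$ is exchangeable), one obtains
$$\PP(E_p)=p^3+\tfrac18\Big(3\big(q-(2p-1)^2\big)+\big(s-(2p-1)^3\big)\Big),$$
where $q:=\EE[\mathrm{sgn}(Y_1-\tau)\mathrm{sgn}(Y_2-\tau)]$ and $s:=\EE[\mathrm{sgn}(Y_1-\tau)\mathrm{sgn}(Y_2-\tau)\mathrm{sgn}(Y_3-\tau)]$, the constant being exactly $p^3$ because $\tfrac18(1+(2p-1))^3=p^3$. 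The upper bound is then almost free: $E_p$ is the event that $Y$ lands in the convex set $\{y\in\RR^3:y_1,y_2,y_3\ge\tau\}$, while $\norm{\alpha}_2\,Y=\sum_{k=1}^d V_k$ with $V_k:=\big((X_1)_k(X_2)_k,(X_1)_k(X_3)_k,(X_2)_k(X_3)_k\big)$ independent, mean zero, covariance $\alpha_k^2\mathrm{I}_3$ and $\EE\|V_k\|^3=c\,\alpha_k^3$; hence the multivariate Berry-Esseen inequality \eqref{multi-berry-eseen} (in dimension $3$, with $\sum_k\rho_k=c\,\theta$) gives $\PP(E_p)=\bar\Phi(\tau)^3+O(\theta)=p^3+O(\theta)$, where $\bar\Phi$ (and $\phi$, below) is the standard normal tail (and density), using $|\bar\Phi(\tau)-p|=O(\theta)$ from one-dimensional Berry-Esseen. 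The substance is the lower bound: we must show $q-(2p-1)^2=o(\theta)$ and that $s-(2p-1)^3$ has a \emph{positive} leading term of order $\theta$.

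For this I would compute $\varphi_Y$ exactly. Since the three inner products are quadratic forms in the Gaussian $(X_1,X_2,X_3)\in\RR^{3d}$, whose covariance factors as $\mathrm{I}_3\otimes D_\alpha$, the Wishart characteristic function \eqref{char of wishart} — applied with the $3\times3$ coefficient matrix $B=\tfrac12\left(\begin{smallmatrix}0&a&b\\a&0&c\\b&c&0\end{smallmatrix}\right)$ and the determinant identity $\det\big(\mathrm{I}_3-\tfrac{2\mathrm{i}\alpha_k}{\norm{\alpha}_2}B\big)=1+\tfrac{\alpha_k^2}{\norm{\alpha}_2^2}(a^2+b^2+c^2)+\tfrac{2\mathrm{i}\alpha_k^3}{\norm{\alpha}_2^3}abc$ — yields
$$\varphi_Y(a,b,c)=\prod_{k=1}^d\Big(1+\tfrac{\alpha_k^2}{\norm{\alpha}_2^2}(a^2+b^2+c^2)+\tfrac{2\mathrm{i}\,\alpha_k^3}{\norm{\alpha}_2^3}\,abc\Big)^{-1/2}.$$
Taking logarithms and Taylor-expanding, the leading behaviour is governed by $\sum_k\alpha_k^2=\norm{\alpha}_2^2$ and $\sum_k\alpha_k^3=\norm{\alpha}_3^3$, so $\varphi_Y(a,b,c)=e^{-(a^2+b^2+c^2)/2}\big(1-\mathrm{i}\theta\,abc\big)+\mathcal{E}(a,b,c)$; since $\norm{\alpha}_\infty=1$, every higher moment sum obeys $\norm{\alpha}_j^j/\norm{\alpha}_2^j\le\norm{\alpha}_4^4/\norm{\alpha}_2^4\le\theta/\norm{\alpha}_2$ for $j\ge4$, so $\mathcal{E}$ should integrate to $o(\theta)$. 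As $\varphi_Y$ loses its $abc$-term whenever one frequency is set to $0$, the one- and two-dimensional marginals of $Y$ are Gaussian up to $O(\norm{\alpha}_4^4/\norm{\alpha}_2^4)=o(\theta)$ (their third moments vanish); in particular $\bar\Phi(\tau)=p+o(\theta)$.

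Finally I would substitute into the inversion formula \eqref{invere char function}. Its principal part $e^{-(a^2+b^2+c^2)/2}$ is the characteristic function of a standard Gaussian triple, contributing $(2\bar\Phi(\tau)-1)^2=(2p-1)^2+o(\theta)$ to $q$ and $(2\bar\Phi(\tau)-1)^3=(2p-1)^3+o(\theta)$ to $s$. The correction $-\mathrm{i}\theta\,abc\,e^{-(a^2+b^2+c^2)/2}$ is exactly the Fourier transform of $\theta\,y_1y_2y_3\,\phi_3(y)$ (with $\phi_3$ the density of $\mathcal{N}(0,\mathrm{I}_3)$), so by the Parseval identity \eqref{Fourier isometry} it does not affect $q$ (setting $c=0$ kills $abc$) and adds to $s$ the amount $\theta\big(\int_{\RR}u\,\phi(u)\,\mathrm{sgn}(u-\tau)\,du\big)^3=8\,\phi(\tau)^3\,\theta$. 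Collecting terms,
$$\PP(E_p)=p^3+\phi(\tau)^3\,\theta+o(\theta).$$
Since $Y_1$ has unit variance, $\tau$ is bounded in terms of $p$ (e.g.\ by Chebyshev's inequality), so $\phi(\tau)^3$ is bounded below by a positive constant depending only on $p$; taking $c_p$ large enough that the $o(\theta)$ remainder is at most half of $\phi(\tau)^3\theta$ yields the theorem, with $\delta_p$ and $\Delta$ depending only on $p$.

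I expect the real obstacle to be making this last computation rigorous. The integral in \eqref{invere char function} converges only as a Cauchy principal value, $\varphi_Y$ need not lie in $L^1(\RR^3)$ (it decays only polynomially along the directions governed by the few possibly-large coordinates of $\alpha$), and the Taylor expansion of $\log\varphi_Y$ is legitimate only while $|(a,b,c)|\lesssim\norm{\alpha}_2$. One therefore has to split the frequency space, exploit that $\varphi_Y$ splits into a part even in all three variables and a part odd in all three simultaneously — which matches the parity of $\widehat{\mathrm{sgn}}_{(\tau,\tau,\tau)}$ from \eqref{shifted sign} and is what makes the principal value manageable — and control the high-frequency remainder through the product structure of $\varphi_Y$, crucially using $\norm{\alpha}_\infty=1$ and that $\norm{\alpha}_2>c_p$ forces the dimension $d>c_p^2$ to be large. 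By contrast, once the expansion above is in hand the positivity needed for the lower bound is automatic, the Gaussian density being strictly positive; this is the analytic shadow of the heuristic that the triangle inequality biases the product $\langle X_1,X_2\rangle\langle X_1,X_3\rangle\langle X_2,X_3\rangle$ toward positive values and thereby inflates the triangle probability.
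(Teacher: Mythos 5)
Your proposal is correct in substance and follows the same core route as the paper: the exact Wishart characteristic function \eqref{char of wishart} for the triple of inner products, Fourier inversion against shifted sign functions as in \eqref{invere char function}--\eqref{shifted sign}, extraction of the $\mathrm{i}\,\theta\,abc$ term as the source of the probability boost, and the multivariate Berry--Esseen bound \eqref{multi-berry-eseen} for the upper bound. The deferred step you flag -- controlling the remainder $\mathcal{E}$ in the principal-value integral by splitting frequency space, exploiting parity, and using the product structure of $\varphi_Y$ together with $\norm{\alpha}_\infty=1$ and $\norm{\alpha}_2>c_p$ -- is precisely where the paper spends most of its effort (Lemmas \ref{lem: integral bound}, \ref{lem: lower bound half}, \ref{lem: coordinate free version}, and the four-region decomposition in the proof of Lemma \ref{lem: lower bound half}), so your diagnosis of the obstacle is accurate.

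Two differences are worth recording. First, you aim for the sharp two-sided asymptotic $\PP(E_p)=p^3+\phi(\tau)^3\theta+o(\theta)$, whereas the paper only proves one-sided bounds with non-matching constants and organizes the lower bound differently: it separates $I_p$ into the $\mathrm{Im}(\varphi)\cos$ and $\mathrm{Re}(\varphi)\sin$ pieces and handles the latter by an $L^1$ comparison of $\mathrm{Re}(\varphi_1)$ with the characteristic function $\psi_1$ of the coordinate-independent surrogate (Lemma \ref{lem: coordinate free version}), rather than by your marginal-by-marginal expansion. Your sharper target forces a genuinely stronger input than the paper's Lemma \ref{lem: tp}: a plain Berry--Esseen error $|\bar\Phi(\tau)-p|=O(\theta)$ would contribute an $O(\theta)$ term of uncontrolled sign to $(2\bar\Phi(\tau)-1)^3$, so you really do need the Edgeworth-type refinement $|\bar\Phi(\tau)-p|=O(\norm{\alpha}_4^4/\norm{\alpha}_2^4)=O(\theta/\norm{\alpha}_2)$, and you correctly supply the reason (vanishing third moments of the symmetric marginal); this must be proved with the same characteristic-function control, not quoted from \eqref{prelim berry eseen}. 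Second, your handling of the pairwise term $q$ is more careful than the paper's: the paper asserts $\EE A_{1,2}A_{1,3}=p^2$ exactly, which fails for $p\neq\tfrac12$ (conditionally on $X_1$ the two edges are i.i.d.\ Bernoulli with a random parameter depending on $\sum_k\alpha_k(X_1)_k^2$, so Jensen gives a strict inequality); your bound $q-(2p-1)^2=O(\norm{\alpha}_4^4/\norm{\alpha}_2^4)=o(\theta)$ is what is actually true and is exactly what is needed for the term not to pollute the leading order.
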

\subsection{Lower bound; the case $p=\frac{1}{2}$}
It will be instructive to begin the discussion with the (easier) case $p = \frac{1}{2}$, in which $t_{p,\alpha} = 0$. We are thus interested in the probability that $\langle X_1, X_2\rangle,\langle X_1, X_3\rangle,\langle X_2, X_3\rangle > 0$. Note that the triplet $(\langle X_1, X_2\rangle, \langle X_1, X_3\rangle,\langle X_2, X_3\rangle)$ can be realized as a linear combination of upper off-diagonal elements taken from $d$ independent $3$-dimensional Wishart random matrices (see below for an elaborated explanation). Unfortunately, there is no known closed expression for the density of such a distribution. The following lemma utilizes the characteristic function of the joint distribution to derive a closed expression for the desired probability.   \\
\begin{lemma} \label{lem: into char function}
\begin{equation} \label{triangle prob}
\PP \left (E_{\frac{1}{2}} \right )= \frac{1}{8}+\mint{\times}\limits_{\mathbb{R}^3}\frac{\mathrm{\bf{i}}}{8abc\pi^3} \left(\prod\limits_i(1 +\alpha_i^2(a^2 +b^2+c^2) + 2\alpha_i^3abc\mathrm{\bf{i}})^{-\frac{1}{2}}\right) \ dadbdc.
\end{equation}
\end{lemma}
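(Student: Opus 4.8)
The plan is to apply the Fourier inversion formula \eqref{invere char function} to the random vector $X = (\langle X_1,X_2\rangle, \langle X_1,X_3\rangle, \langle X_2,X_3\rangle)$, evaluated at $t_1=t_2=t_3 = 0$ (which is the correct threshold since $p = \tfrac12$ gives $t_{p,\alpha}=0$). At this point the right-hand side of \eqref{invere char function} collapses: every marginal CDF $F(0)$, $F(0,0)$ equals the corresponding orthant probability for a centered, symmetric-under-negation distribution, so $F(0) = \tfrac12$ and $F(0,0) = \tfrac14$ (the pairwise distribution of inner products is symmetric under flipping the sign of any one $X_i$). Hence the right-hand side becomes $8F(0,0,0) - 4\cdot 3\cdot\tfrac14 + 2\cdot 3\cdot\tfrac12 - 1 = 8\PP(E_{1/2}) - 1$, and rearranging gives $\PP(E_{1/2}) = \tfrac18 + \tfrac18\cdot\frac{1}{\pi^3}\mint{\times}\limits_{\RR^3}\frac{\varphi(a,b,c)}{abc}\,dadbdc$, which is already the desired shape once we identify $\varphi$ and note the convention that $\widehat{\mathrm{sgn}}_{(0,0,0)}(a,b,c) = 1/(abc)$ from \eqref{shifted sign}. (One must be slightly careful that the factor of $i$ appearing in \eqref{triangle prob} is bundled into $\varphi$; see below.)

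The heart of the computation is therefore to compute $\varphi(a,b,c) = \EE\exp\big(\mathrm{i}(a\langle X_1,X_2\rangle + b\langle X_1,X_3\rangle + c\langle X_2,X_3\rangle)\big)$. First I would observe that the exponent splits as a sum over coordinates $i = 1,\dots,d$: writing $Y^i = (X_1^i, X_2^i, X_3^i) \sim \cN(0, \alpha_i^2 \mathrm{I}_3)$ independently across $i$, the exponent is $\mathrm{i}\sum_i (a X_1^i X_2^i + b X_1^i X_3^i + c X_2^i X_3^i) = \mathrm{i}\sum_i (Y^i)^T \Theta\, Y^i$ with the fixed symmetric matrix
$$\Theta = \frac{1}{2}\begin{pmatrix} 0 & a & b \\ a & 0 & c \\ b & c & 0\end{pmatrix}.$$
By independence, $\varphi(a,b,c) = \prod_i \EE\exp(\mathrm{i}(Y^i)^T\Theta Y^i)$. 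Each factor is the characteristic function of a $1$-degree-of-freedom Wishart (really just a quadratic form in a Gaussian) with covariance $\Sigma_i = \alpha_i^2 \mathrm{I}_3$, so by \eqref{char of wishart} it equals $\det(\mathrm{I}_3 - 2\mathrm{i}\Theta\Sigma_i)^{-1/2} = \det(\mathrm{I}_3 - 2\mathrm{i}\alpha_i^2\Theta)^{-1/2}$. It then remains to expand the $3\times 3$ determinant $\det(\mathrm{I}_3 - 2\mathrm{i}\alpha_i^2\Theta)$ explicitly: this is a routine $3\times 3$ computation giving $1 + \alpha_i^4(a^2+b^2+c^2) - 2\mathrm{i}\alpha_i^6 \cdot(\text{something})$ — I would need to track the powers carefully, but the off-diagonal structure of $\Theta$ makes the diagonal minors vanish and the full determinant picks up the $a^2+b^2+c^2$ term from the $2\times 2$ minors and the $abc$ term from the product of off-diagonal entries. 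Comparing with the claimed product $\prod_i(1 + \alpha_i^2(a^2+b^2+c^2) + 2\alpha_i^3 abc\,\mathrm{i})^{-1/2}$ suggests that the $\alpha_i$ in the statement is really playing the role of what I have called $\alpha_i^2$ (i.e. the covariance eigenvalue), so the determinant should read $1 + \lambda_i(a^2+b^2+c^2) + 2\lambda_i^{3/2}abc\,\mathrm{i}$ with $\lambda_i = \alpha_i^2$ — I will reconcile this bookkeeping against the normalization $\langle X_1,X_2\rangle$ uses so that the final formula matches \eqref{triangle prob} exactly, including the stray factor of $i/8$ (which comes from rewriting $\varphi/(abc)$ after accounting for how the real/imaginary parts of $\varphi$ interact with the odd kernel $1/(abc)$ under the principal value — the even part of $\varphi$ integrates against the odd kernel to zero, so only the imaginary, odd part survives, which is where the explicit $i$ originates).

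The main obstacle I anticipate is \emph{not} any single algebraic identity but rather the justification of the principal-value manipulations: a priori $\varphi(a,b,c)/(abc)$ is not absolutely integrable near the coordinate hyperplanes $a = 0$, $b=0$, $c=0$, so the inversion formula \eqref{invere char function} must be invoked in exactly the Cauchy-principal-value sense defined in the Preliminaries, and one must check the hypotheses of \cite[Theorem 5]{S91} — in particular that $X$ has a bounded continuous density (which follows since, for $\norm{\alpha}_2$ large enough, $\varphi$ decays; this is presumably where the hypothesis "$\norm{\alpha}_2 > c_p$"-type condition will eventually be needed, though for the present lemma it may suffice that the distribution is non-degenerate). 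I would handle this by verifying that the symmetry reduction above (pairing $(a,b,c)$ with its sign flips via the $\Delta_a\Delta_b\Delta_c$ operators) makes each one-dimensional slice of the integral a genuine improper Riemann integral, and that the product formula for $\varphi$ is manifestly an entire function of $(a,b,c)$ whose modulus is $\le \prod_i |1 + \alpha_i^2(a^2+b^2+c^2) + 2\mathrm{i}\alpha_i^3 abc|^{-1/2} \le 1$, so there are no convergence issues away from the origin and the only delicate point is the behavior of the odd part near $0$, which is exactly what the principal value is designed to accommodate. Once these analytic points are in place, the identity \eqref{triangle prob} follows by assembling the three ingredients: the reduction of the RHS of \eqref{invere char function} at the origin to $8\PP(E_{1/2}) - 1$, the product formula for $\varphi$, and the extraction of the odd part.
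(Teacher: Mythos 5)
Your proposal follows essentially the same route as the paper: reduce to the inversion formula evaluated at the origin via the sign-flip symmetry of the $X_i$ (the paper phrases this through the identity $4\left(\PP(E_{\frac{1}{2}})+\PP(I_{\frac{1}{2}})\right)=1$ together with the $\mathrm{sgn}(xyz)$ integral, but it is the same symmetry and yields the same $8\PP(E_{\frac{1}{2}})-1$), and then compute $\varphi$ as a product over coordinates of Wishart characteristic functions via \eqref{char of wishart}. The one loose end you flag — the powers of $\alpha_i$ — resolves immediately from the paper's convention that $D_\alpha=\mathrm{diag}(\alpha)$, so each coordinate triple $(X_1^i,X_2^i,X_3^i)$ has covariance $\alpha_i\mathrm{I}_3$ (not $\alpha_i^2\mathrm{I}_3$) and $\det\left(\mathrm{I}_3-2\mathrm{i}\alpha_i\Theta\right)=1+\alpha_i^2(a^2+b^2+c^2)+2\alpha_i^3abc\,\mathrm{i}$ exactly as claimed.
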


\begin{proof}
Consider the event $\{\langle X_1, X_2\rangle > 0, \langle X_1, X_3\rangle < 0,\langle X_2, X_3\rangle<0 \}$. The map $(x,y,z) \mapsto (x, y, -z)$ preserves the law of $(X_1,X_2,X_3)$. Thus, 
\begin{flalign*}
\mathbb{P}(\{\langle X_1, X_2\rangle > 0,& \langle X_1, X_3\rangle < 0,\langle X_2, X_3\rangle<0 \}) &&\\&= \mathbb{P}(\{\langle X_1, X_2\rangle > 0, \langle X_1, X_3\rangle > 0,\langle X_2, X_3\rangle>0 \}).&&
\end{flalign*} By the same argument, 
\begin{flalign*}
\mathbb{P}(\{\langle X_1, X_2\rangle > 0,& \langle X_1, X_3\rangle > 0,\langle X_2, X_3\rangle<0 \})&&\\ &= \mathbb{P}(\{\langle X_1, X_2\rangle < 0, \langle X_1, X_3\rangle < 0,\langle X_2, X_3\rangle<0\}).&&
\end{flalign*}
We denote the event on the right side by $\PP \left (I_{\frac{1}{2}} \right )$, the probability of an induced independent set on $3$ vertices.\\

From the above observation, it is clear that $4\left(\PP(E_{\frac{1}{2}}) + \PP(I_{\frac{1}{2}})\right) = 1$. Also, we may note that $\int\limits_{\mathbb{R}^3} \mathrm{sgn}(xyz)\cdot f(x,y,z)\ dxdydz = 4\left(\PP(E_{\frac{1}{2}}) - \PP(I_{\frac{1}{2}})\right)$. Combining the two equalities yields $\PP(E_{\frac{1}{2}}) =\frac{1}{8} + \frac{1}{8} \int\limits_{\mathbb{R}^3} \mathrm{sgn}(xyz)\cdot f(x,y,z)\ dxdydz$.
As noted, no closed expression for $f$ is known, so the calculation of the above integral cannot be carried out in a straightforward manner. Instead, \eqref{Fourier isometry} allows us to rewrite the integral as
$$\int\limits_{\mathbb{R}^3} f(x,y,z)\cdot \mathrm{sgn}(xyz) \ dxdydz = \frac{1}{\pi^3}\mint{\times}\limits_{\mathbb{R}^3} \varphi(a,b,c)\cdot \widehat{\mathrm{sgn}}(abc) \ dadbdc,$$
where $\varphi$ is the characteristic function of $f$, and $\widehat{\mathrm{sgn}}$ is the Fourier transform of $\mathrm{sgn}_{(0,0,0)}$ as in \eqref{shifted sign}.\\

Thus, we are required to calculate $\varphi(a,b,c)$. Consider three independent normal random variables, $X,Y,Z$, with mean $0$ and variance $\sigma^2$, the characteristic function of $(XY,XZ,YZ)$ is defined by $(a,b,c)\to E[\exp(i(a\cdot XY + b\cdot XZ +c \cdot YZ))]$. We have that 
\[
a\cdot XY + b\cdot XZ +c \cdot YZ =\\ 
\mathrm{Tr}  \Bigg( \left[ \begin{array}{ccc}
0 &\frac{a}{2} & \frac{b}{2} \\
\frac{a}{2} & 0 & \frac{c}{2} \\
\frac{b}{2} & \frac{c}{2} & 0 \end{array} \right]  \cdot 
\left[ \begin{array}{ccc}
X^2 & XY & XZ \\
XY & Y^2 & YZ \\
XZ & YZ & Z^2 \end{array} \right]\Bigg). \]
If we consider the Wishart distribution $\mathcal{W}_3(\Sigma_\sigma, 1)$, where $\Sigma_\sigma$ is a $\sigma^2$ scalar matrix, we note that the above function equals the characteristic function of $\mathcal{W}_3(\Sigma_\sigma, 1)$ on the matrix $\left[ \begin{array}{ccc}
0 &\frac{a}{2} & \frac{b}{2} \\
\frac{a}{2} & 0 & \frac{c}{2} \\
\frac{b}{2} & \frac{c}{2} & 0 \end{array} \right]. $
Using the formula \eqref{char of wishart}, this equals $\det\Bigg(\left[ \begin{array}{ccc}
1 &-\mathrm{\bf{i}}\sigma^2 a & -\mathrm{\bf{i}}\sigma^2 b \\
-\mathrm{\bf{i}}\sigma^2 a& 1 & -\mathrm{\bf{i}}\sigma^2 c \\
-\mathrm{\bf{i}}\sigma^2 b & -\mathrm{\bf{i}}\sigma^2 c& 1 \end{array} \right] \Bigg)^{-\frac{1}{2}}$,
which may be written otherwise as $(1 + (\sigma^2)^2(a^2 + b^2 + c^2) + 2(\sigma^2)^3abc\mathrm{\bf{i}})^{-\frac{1}{2}}$.\\
\\
By the convolution-multiplication theorem ~\cite[Theorem 3.3.2]{durrett2010probability}, the characteristic function of a sum of independent variables is the multiplication of their characteristic functions, it then follows that: \begin{equation}\label{define phi}
\varphi(a,b,c) = \prod\limits_{i=1}^{d}(1 + \alpha_i^2(a^2+b^2+c^2) +2\alpha_i^3 abc\mathrm{\bf{i}})^{-\frac{1}{2}},
\end{equation}
which results in:
\begin{equation*}
\mint{\times}\limits_{\mathbb{R}^3}  \varphi(a,b,c)\cdot\widehat{\mathrm{sgn}}(abc) \ dadbdc= \mint{\times}\limits_{\mathbb{R}^3}\frac{\mathrm{\bf{i}}}{abc}\prod\limits_i(1 +\alpha_i^2(a^2 +b^2+c^2) + 2\alpha_i^3abc\mathrm{\bf{i}})^{-\frac{1}{2}}\ dadbdc.
\end{equation*}
This concludes the proof.
\end{proof}

In view of the above, it suffices to estimate the integral in \eqref{triangle prob}. We will show that the integral of the expression
$$\mathrm{Re}\left(\frac{\mathrm{\bf{i}}}{abc}\prod\limits_i(1 +\alpha_i^2(a^2 +b^2+c^2) + 2\alpha_i^3abc\mathrm{\bf{i}})^{-\frac{1}{2}}\right),$$
is concentrated in a ball of radius $\frac{1}{\norm{\alpha}_2}$, and that inside this ball, the above expression is very close in value to $\norm{\alpha}_3^3$. From this, it will follow that
$$\PP\left(E_{\frac{1}{2}}\right) \simeq \frac{1}{8} +  \left(\frac{\norm{\alpha}_3}{\norm{\alpha}_2}\right)^3.$$
The next result will be used to control the integral outside of the aforementioned ball.
\begin{lemma} \label{lem: integral bound}
Let $n \geq 3$ and $\gamma = \{\gamma_i\}_{i=1}^d$, suppose that $\gamma_i \in [0,1]$ for $1\leq i \leq d$. Define
$$I(T) = \int\limits_T^\infty \frac{r^2\ dr}{\sqrt{\prod\limits_i\left(1+\gamma_i^2r^2\right)}}, \ \ \forall T \geq 1,$$
and denote $\norm{\gamma}_2^2 = \sum\limits_i\gamma_i^2$, then there exist constants $c_n,C_n>0$, depending only on $n$, such that whenever $\norm{\gamma}_2^2>c_n$ we have that $I(T) \leq C_n\left(\frac{1}{\norm{\gamma}_2^{2}}\right)^{\frac{n}{2}}\frac{1}{T^{n-3}}$.
\end{lemma}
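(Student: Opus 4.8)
The plan is to reduce the entire statement to a single pointwise lower bound for the denominator of the integrand, of the shape $\sqrt{\prod_i(1+\gamma_i^2 r^2)}\ge c_n^{-1}\,r^{m}\norm{\gamma}_2^{m}$, valid for all $r\ge 1$ and a suitable integer $m>3$, and then to integrate the resulting power of $r$. The heuristic behind such a bound is that, although each factor $1+\gamma_i^2 r^2$ contributes only a power $r^2$, the hypothesis $\gamma_i\in[0,1]$ forces at least $\norm{\gamma}_2^2$ of the $\gamma_i$ to be nonzero, so the product accumulates many powers of $r$ once $\norm{\gamma}_2^2$ is large.

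To make this quantitative I would expand the product as a polynomial in $r^2$. Writing $e_k:=e_k(\gamma_1^2,\dots,\gamma_d^2)$ for the $k$-th elementary symmetric polynomial of the $\gamma_i^2$, we have $\prod_i(1+\gamma_i^2 r^2)=\sum_{k=0}^d r^{2k}e_k\ge r^{2m}e_m$ for every $m\le d$, so everything comes down to bounding $e_m$ from below by a power of $\norm{\gamma}_2^2=\sum_i\gamma_i^2$. I expect \emph{this} to be the main obstacle, and it is precisely where the constraint $\gamma_i\in[0,1]$ is essential — without a uniform bound on the $\gamma_i$ the quantity $e_m$ can vanish (e.g.\ if fewer than $m$ of them are nonzero). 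The estimate I would establish is: if $x_1,\dots,x_d\in[0,1]$ satisfy $\sum_i x_i=S$ with $\lfloor S\rfloor\ge m$, then $e_m(x)\ge\binom{\lfloor S\rfloor}{m}\ge (S/(2m))^m$. The first inequality is an extreme-point argument on the polytope $\{x\in[0,1]^d:\sum_i x_i=S\}$: if two coordinates $x_i,x_j$ both lie in $(0,1)$, then, freezing the other $d-2$ coordinates $x'$ and the sum $x_i+x_j$, the identity $e_m(x)=e_m(x')+(x_i+x_j)e_{m-1}(x')+x_ix_j\,e_{m-2}(x')$ exhibits $e_m$ as a concave (downward) parabola in $x_i$, since the coefficient of $x_i^2$ is $-e_{m-2}(x')\le 0$; hence $e_m$ attains its minimum when $x_i$ or $x_j$ hits $\{0,1\}$, and iterating drives all but at most one coordinate into $\{0,1\}$. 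At such a vertex the configuration is $\lfloor S\rfloor$ coordinates equal to $1$, one equal to $S-\lfloor S\rfloor$, the rest $0$, where $e_m$ equals $\binom{\lfloor S\rfloor}{m}+(S-\lfloor S\rfloor)\binom{\lfloor S\rfloor}{m-1}\ge\binom{\lfloor S\rfloor}{m}$. The second inequality is the elementary bound $\binom{k}{m}\ge (k/m)^m$ combined with $\lfloor S\rfloor\ge S/2$ (valid for $S\ge 2$).

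Feeding this back with $S=\norm{\gamma}_2^2$ — which we may assume exceeds $c_n\ge 2m$, so that $\lfloor S\rfloor\ge m$ and $S\ge 2$ — gives $e_m\ge\norm{\gamma}_2^{2m}/(2m)^m$, hence $\sqrt{\prod_i(1+\gamma_i^2 r^2)}\ge (2m)^{-m/2}r^m\norm{\gamma}_2^m$ for all $r\ge 0$, and therefore
$$I(T)\le (2m)^{m/2}\norm{\gamma}_2^{-m}\int_T^\infty r^{2-m}\,dr .$$
For $m\ge 4$ this integral converges and equals $T^{3-m}/(m-3)$, so taking $m=n$ immediately settles the case $n\ge 4$ with $c_n=2n$ and $C_n=(2n)^{n/2}/(n-3)$. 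The borderline value $n=3$ is the only technical wrinkle, since then $m=3$ gives the divergent $\int_T^\infty r^{-1}\,dr$; I would circumvent it by simply using $m=4$ even when $n=3$, which yields $I(T)\le 64\,\norm{\gamma}_2^{-4}T^{-1}$, and then, since $T\ge 1$ and $\norm{\gamma}_2^2>c_3$ is large, $\norm{\gamma}_2 T\ge 1$ gives $I(T)\le 64\,\norm{\gamma}_2^{-3}$, the desired bound with $C_3=64$. (Equivalently one may run the whole argument uniformly with $m=n+1$ for every $n\ge 3$, absorbing the leftover factor $(\norm{\gamma}_2 T)^{-1}\le c_n^{-1/2}$ into the constant to obtain $c_n=2(n+1)$ and $C_n=(2(n+1))^{n/2}/(n-2)$.)
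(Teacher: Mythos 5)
Your proposal is correct, and its skeleton coincides with the paper's: both proofs discard all but the degree-$2m$ term of $\prod_i(1+\gamma_i^2r^2)$, reducing everything to a lower bound on the elementary symmetric polynomial $e_m$ of the $\gamma_i^2$ in terms of $S=\norm{\gamma}_2^2$, and then integrate the remaining power of $r$. Where you diverge is in how that combinatorial bound is proved. The paper uses the recursion $e_n=\frac{1}{n}\sum_i\gamma_i^2\,e_{n-1}(\gamma\setminus\{\gamma_i\})$ together with the observation that deleting one coordinate costs at most $1$ of the squared norm (this is where $\gamma_i\le 1$ enters), yielding $e_n\ge\frac{1}{n!}\prod_{k=0}^{n-1}(S-k)$ by induction. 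Your extreme-point argument on the polytope $\{x\in[0,1]^d:\sum x_i=S\}$ instead identifies the exact minimizer of $e_m$ and gives the essentially equivalent bound $\binom{\lfloor S\rfloor}{m}$; it is slightly longer to set up but arguably more transparent about why the constraint $x_i\le 1$ is indispensable. A genuine advantage of your write-up is the treatment of $n=3$: the paper's computation ends with a factor $\frac{1}{n-3}$ coming from $\int_T^\infty r^{-(n-2)}dr$, which diverges at $n=3$, so as literally written its proof covers only $n\ge 4$ (harmless for the paper, since the lemma is invoked only with $n=4$ and $n>606$, but a gap relative to the stated hypothesis $n\ge3$). Your device of running the argument with $m=n+1$ and absorbing the leftover factor $(\norm{\gamma}_2T)^{-1}\le 1$ repairs this cleanly. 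All the individual steps you cite check out: the identity $e_m(x)=e_m(x')+(x_i+x_j)e_{m-1}(x')+x_ix_j e_{m-2}(x')$, the concavity in $x_i$ at fixed $x_i+x_j$, the vertex value $\binom{\lfloor S\rfloor}{m}+(S-\lfloor S\rfloor)\binom{\lfloor S\rfloor}{m-1}$, and the bounds $\binom{k}{m}\ge(k/m)^m$ and $\lfloor S\rfloor\ge S/2$ for $S\ge2$.
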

\begin{proof}
Indeed, assume $\norm{\gamma}_2^2 > n$. Note that necessarily $d \geq n$ in this case. Thus we can give a non trivial lower bound of $\prod\limits_i\left(1 +\gamma_i^2r^2\right)$ by considering the sum of all products of $n$ different elements of $\gamma$. That is
$$\prod\limits_i\left(1 +\gamma_i^2r^2\right) \geq\left(\sum\limits_{\substack{S \subset \gamma \\ |S| = n}}\prod\limits_{\gamma_j\in S}\gamma_j^2\right)r^{2n}.$$
We claim now that: 
\begin{equation} \label{bound}
\sum\limits_{\substack{S \subset \gamma \\ |S| = n}}\prod\limits_{\gamma_j\in S}\gamma_j^2 \geq \frac{1}{n!}\prod_{k=0}^{n-1}\left(\norm{\gamma}_2^2-k\right).
\end{equation}
To see that, we may rewrite 
$$\sum\limits_{\substack{S \subset \gamma \\ |S| = n}}\prod\limits_{\gamma_i\in S}\gamma_i^2 =\frac{1}{n} \sum\limits_i \gamma_i^2\sum\limits_{\substack{S \subset \gamma \setminus\{\gamma_i\} \\ |S| = n-1}}\prod\limits_{\gamma_j\in S}\gamma_j^2,$$
where we have counted each $S \subset \gamma$, $n$ times. But, $\gamma_i \leq 1$ for every $1 \leq i \leq d$, and so $\norm{\gamma \setminus \{\gamma_i\}}_2^2 > \norm{\gamma}_2^2 - 1$. \eqref{bound} now follows by induction, since
$$\frac{1}{n}\sum\limits_i \gamma_i^2\sum\limits_{\substack{S \subset \gamma \setminus\{\gamma_i\} \\ |S| = n-1}}\prod\limits_{\gamma_j\in S}\gamma_j^2\geq\frac{1}{n} \sum\limits_i \gamma_i^2\frac{1}{(n-1)!} \prod\limits_{k=0}^{n-2}(\norm{\gamma}_2^2-1-k) = \frac{1}{n!}\prod_{k=0}^{n-1}\left(\norm{\gamma}_2^2-k\right)$$
If we further assume that $\norm{\gamma}_2^2 \geq 2n$, then $\norm{\gamma}_2^2 - k > \frac{1}{2}\norm{\gamma}_2^2$, for every $0 \leq k \leq n-1$. Plugging this into \eqref{bound} produces
$$ \prod\limits_i\left(1 +\gamma_i^2r^2\right) \geq \left(\frac{\norm{\gamma}_2^2}{n!2}\right)^{n}r^{2n},$$
which implies
$$I(T) \leq \left(\frac{n!2}{\norm{\gamma}_2^{2}}\right)^{\frac{n}{2}}\int\limits_{T}^\infty\frac{dr}{r^{n-2}} = \frac{(n!2)^n}{n-3} \left(\frac{1}{\norm{\gamma}_2^2}\right)^{\frac{n}{2}}\frac{1}{T^{n-3}},$$ 
as desired.\\
\end{proof}
{\it Remark:} The constants obtained in the above proof are far from optimal, but will suffice for our needs.

We will use the above result in order to bound from below the integral in formula \eqref{triangle prob}.  For this, we will assume W.L.O.G. that $\alpha$ is normalized in the following way: \begin{equation}\label{normalization}
\alpha_1 = 1 \mbox{ and } \alpha_i \in [0,1] \mbox{ for }  1 \leq i \leq d.
\end{equation} 
We note that this normalization yields the following properties for $n,m \in \NN$, which we shall use freely:
\begin{itemize}
\item For every $k>0$, $\norm{\alpha}_k^k \geq 1$ and thus $\left(\norm{\alpha}_k^k\right)^n \leq \left(\norm{\alpha}_k^k\right)^m$ when $n \leq m$.
\item $\alpha_i^n \geq \alpha_i^m$ and $\norm{\alpha}_n^n \geq \norm{\alpha}_m^m$ when $n \leq m$.
\item For any $n>2$ and $\eps>0$ there exists $c>0$ such that whenever $\norm{\alpha}_2^2 > c$ we have $\left(\frac{\norm{\alpha}_n}{\norm{\alpha}_2}\right)^n<\eps$.
\end{itemize}
\begin{lemma} \label{lem: lower bound half}
There exists a constant $c_{1/2} > 0$ such that whenever $\norm{\alpha}_2^2>c_{1/2}$ then $$ \mint{\times}\limits_{\mathbb{R}^3}\frac{\mathrm{\bf{i}}}{abc}\prod\limits_i\left(1 +\alpha_i^2(a^2 +b^2+c^2) + 2\alpha_i^3abc\mathrm{\bf{i}}\right)^{-\frac{1}{2}}\ dadbdc \geq \frac{1}{100}\left(\frac{\norm{\alpha}_3}{\norm{\alpha}_2}\right)^3.$$
\end{lemma}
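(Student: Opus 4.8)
The plan is to estimate the principal value in \eqref{triangle prob} by turning it into an absolutely convergent integral over the positive octant and analysing that integrand through its polar form. Write $\varphi$ as in \eqref{define phi} and $g=\frac{\mathrm i}{abc}\varphi$. Each factor of $\varphi$ has positive real part, so the principal branch of $z\mapsto z^{-1/2}$ applies; flipping the sign of any one of $a,b,c$ conjugates every factor (it reverses the sign of $abc$) and flips the sign of $\mathrm i/abc$, so $g(-a,b,c)=\overline{g(a,b,c)}$ and likewise in $b,c$. Hence $\Delta_c\Delta_b\Delta_a g=8\,\mathrm{Re}\,g$ on $(0,\infty)^3$, the principal value in \eqref{triangle prob} equals $8\int_{(0,\infty)^3}\mathrm{Re}\,g=-8\int_{(0,\infty)^3}\frac{\mathrm{Im}\,\varphi}{abc}\,da\,db\,dc$, and it suffices to show this integral is $\ge\tfrac1{64}(\norm{\alpha}_3/\norm{\alpha}_2)^3$. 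Writing the $i$-th factor of $\varphi$ as $(1+\alpha_i^2\rho^2)(1+\mathrm i w_i)$ with $\rho^2=a^2+b^2+c^2$, $w_i:=\tfrac{2\alpha_i^3abc}{1+\alpha_i^2\rho^2}\ge0$ and $Q(\rho):=\prod_i(1+\alpha_i^2\rho^2)^{-1/2}$, the identity $1+\mathrm i w=\sqrt{1+w^2}\,e^{\mathrm i\arctan w}$ gives
\[
-\frac{\mathrm{Im}\,\varphi}{abc}=\frac{Q(\rho)}{abc}\Bigl(\prod_i(1+w_i^2)^{-1/4}\Bigr)\sin\Bigl(\tfrac12\sum_i\arctan w_i\Bigr),
\]
and, using $|\sin x|\le|x|$, $\arctan w\le w$, $\prod_i(1+w_i^2)^{-1/4}\le1$, the crude bound $\bigl|\tfrac{\mathrm{Im}\,\varphi}{abc}\bigr|\le\tfrac12\tfrac{Q(\rho)}{abc}\sum_i w_i=Q(\rho)\sum_i\tfrac{\alpha_i^3}{1+\alpha_i^2\rho^2}\le Q(\rho)\norm{\alpha}_3^3$.

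Fix a large absolute constant $R$ and split the octant at $\rho=R/\norm{\alpha}_2$. On the core $\mathcal{I}=\{(a,b,c)\in(0,\infty)^3:\rho\le R/\norm{\alpha}_2\}$, AM--GM gives $abc\le\rho^3/(3\sqrt3)$, hence $\sum_i w_i\le 2abc\,\norm{\alpha}_3^3\le\tfrac{2R^3}{3\sqrt3}(\norm{\alpha}_3/\norm{\alpha}_2)^3\le1$ once $\norm{\alpha}_2$ is large enough (using that $(\norm{\alpha}_3/\norm{\alpha}_2)^3\to0$, as noted after \eqref{normalization}). On $\mathcal{I}$ every $w_i\le1$, so $\arctan w_i\ge\tfrac\pi4 w_i$ and $\tfrac12\sum\arctan w_i\le\tfrac12<\tfrac\pi2$; combining $\sin x\ge\tfrac2\pi x$ on $[0,\tfrac\pi2]$ with $\prod_i(1+w_i^2)^{-1/4}\ge e^{-1/4}$ (since $\sum w_i^2\le(\sum w_i)^2\le1$) gives, on $\mathcal{I}$,
\[
-\frac{\mathrm{Im}\,\varphi}{abc}\ \ge\ \frac{e^{-1/4}}{4}\frac{Q(\rho)}{abc}\sum_i w_i\ =\ \frac{e^{-1/4}}{2}\,Q(\rho)\sum_i\frac{\alpha_i^3}{1+\alpha_i^2\rho^2}\ \ge\ 0.
\]
Moreover on $\mathcal{I}$ one has $\rho^2\le R^2/\norm{\alpha}_2^2\le1$ (taking $\norm{\alpha}_2^2\ge R^2$), so $Q(\rho)\ge e^{-\rho^2\norm{\alpha}_2^2/2}$ and $\sum_i\tfrac{\alpha_i^3}{1+\alpha_i^2\rho^2}\ge\tfrac{\norm{\alpha}_3^3}{1+\rho^2}\ge\tfrac{\norm{\alpha}_3^3}{2}$; passing to polar coordinates on the octant and substituting $u=r\norm{\alpha}_2$,
\[
\int_{\mathcal{I}}\Bigl(-\frac{\mathrm{Im}\,\varphi}{abc}\Bigr)\ \ge\ \frac{e^{-1/4}}{4}\norm{\alpha}_3^3\cdot\frac\pi2\int_0^{R/\norm{\alpha}_2}e^{-r^2\norm{\alpha}_2^2/2}r^2\,dr\ =\ \frac{e^{-1/4}\pi}{8}\Bigl(\int_0^{R}u^2e^{-u^2/2}\,du\Bigr)\frac{\norm{\alpha}_3^3}{\norm{\alpha}_2^3}.
\]

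For the tail, the crude bound and polar coordinates give $\bigl|\int_{(0,\infty)^3\setminus\mathcal{I}}\tfrac{\mathrm{Im}\,\varphi}{abc}\bigr|\le\norm{\alpha}_3^3\cdot\tfrac\pi2\int_{R/\norm{\alpha}_2}^\infty Q(r)r^2\,dr$. By Lemma~\ref{lem: integral bound} with $n=4$ on the range $r\ge1$ (which yields a bound $\le C_4\norm{\alpha}_2^{-4}\le C_4R^{-1}\norm{\alpha}_2^{-3}$ since $\norm{\alpha}_2\ge R$), together with the estimate $\prod_i(1+\alpha_i^2r^2)\ge c\,\norm{\alpha}_2^8 r^8$ on $R/\norm{\alpha}_2\le r\le1$ (bounding the product below by a constant times the product of any four of its factors, exactly as in the proof of that lemma), this radial integral is at most $C'R^{-1}\norm{\alpha}_2^{-3}$ for an absolute constant $C'$; hence the tail contributes at least $-\tfrac{\pi C'}{2R}\tfrac{\norm{\alpha}_3^3}{\norm{\alpha}_2^3}$. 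Since $\int_0^Ru^2e^{-u^2/2}\,du\uparrow\sqrt{\pi/2}$ and $\tfrac{e^{-1/4}\pi}{8}\sqrt{\pi/2}>\tfrac1{64}$, one first picks $R$ so large that $\tfrac{e^{-1/4}\pi}{8}\int_0^Ru^2e^{-u^2/2}\,du-\tfrac{\pi C'}{2R}>\tfrac1{64}$, then picks $c_{1/2}$ large enough (depending only on this absolute $R$) to justify $\norm{\alpha}_2^2\ge R^2$, the smallness $\tfrac{2R^3}{3\sqrt3}(\norm{\alpha}_3/\norm{\alpha}_2)^3\le1$, and the hypothesis $\norm{\alpha}_2^2>c_4$ of Lemma~\ref{lem: integral bound}. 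Adding the core and tail estimates and multiplying by $8$ gives the claim.

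The one genuine obstacle is that $\mathrm{Im}\,\varphi$ is \emph{not} sign-definite on the octant: when $abc$ is large the phase $\tfrac12\sum_i\arctan w_i$ overshoots $\tfrac\pi2$ (and eventually $\pi$), so the $\sin$ factor oscillates and there is no pointwise lower bound on $-\mathrm{Im}\,\varphi/abc$. The split handles this because the mass of $Q(\rho)=\prod_i(1+\alpha_i^2\rho^2)^{-1/2}$ sits at scale $\rho\sim\norm{\alpha}_2^{-1}$; on that core --- precisely because $(\norm{\alpha}_3/\norm{\alpha}_2)^3\to0$ --- all the $w_i$ are simultaneously $\le1$, the integrand is honestly nonnegative and of the right order $\norm{\alpha}_3^3\norm{\alpha}_2^{-3}$, while beyond it the decay of $Q$ (it has at least $\approx\norm{\alpha}_2^2$ factors, hence decays faster than any fixed power of $\rho$) makes the oscillatory part a lower-order correction. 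It is essential to take the core at the scale $\norm{\alpha}_2^{-1}$, not at a fixed scale, since that is both where $Q$ concentrates and where the $w_i$ stay controlled.
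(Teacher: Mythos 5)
Your proof is correct, and while it follows the same overall strategy as the paper --- pass to the polar form of the characteristic function $\varphi$, extract the main contribution from a ball of radius $\sim 1/\norm{\alpha}_2$ where $\prod_i(1+\alpha_i^2\rho^2)^{-1/2}$ concentrates, and kill the tail with Lemma~\ref{lem: integral bound} --- your decomposition is genuinely different and arguably cleaner. The paper uses four zones: a quantitative lower bound on the ball of radius $1/\norm{\alpha}_2$ via the Taylor-type inequalities $\sin x\ge x-x^2$, $\arctan x\ge x-x^2$ (which produce error terms of order $(\norm{\alpha}_3/\norm{\alpha}_2)^6$ that must be subtracted), then a \emph{positivity-only} zone out to radius $\norm{\alpha}_2^{-11/12}$, and only beyond that an absolute-value bound. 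That intermediate zone is forced on the paper because the crude bound $|\mathrm{Im}\,\varphi/(abc)|\le\norm{\alpha}_3^3\prod_i(1+\alpha_i^2\rho^2)^{-1/2}$ integrated from radius exactly $1/\norm{\alpha}_2$ is of the \emph{same} order as the main term, with a non-negligible constant. You sidestep this by enlarging the core to radius $R/\norm{\alpha}_2$ with a tunable absolute constant $R$: on that core all the $w_i$ are simultaneously $\le 1$ (using $(\norm{\alpha}_3/\norm{\alpha}_2)^3\to 0$), so the Jordan-type bounds $\sin x\ge\frac2\pi x$ and $\arctan w\ge\frac\pi4 w$ give a pointwise nonnegative lower bound with no error terms to subtract, while the tail picks up the factor $1/R$ that makes it beatable by the Gaussian integral $\int_0^R u^2e^{-u^2/2}\,du$. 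Your symmetry reduction $\Delta_c\Delta_b\Delta_a g=8\,\mathrm{Re}\,g$ on the octant is a slightly more careful justification of the paper's passage from the principal value to $\int_{\RR^3}\mathrm{Im}(\varphi)/(abc)$ (and fixes an inconsequential sign-convention slip there), and your tail estimate on $[R/\norm{\alpha}_2,1]$ via $\prod_i(1+\alpha_i^2r^2)\ge c\norm{\alpha}_2^8r^8$ is a legitimate reuse of the argument inside Lemma~\ref{lem: integral bound}. All constants check out ($\tfrac{e^{-1/4}\pi}{8}\sqrt{\pi/2}\approx 0.38>\tfrac1{64}$), so the argument is complete.
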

\begin{proof}
First, we have the privilege of knowing the integral evaluates to some probability. Therefore, the principal value of it's imaginary part must vanish. This becomes evident by noting that the imaginary part is an odd function. Thus, we are interested in:
\begin{align} \label{big integral}
\nonumber \mathrm{Re}&\left(\mint{\times}\limits_{\mathbb{R}^3}\frac{\mathrm{\bf{i}}}{abc}\prod\limits_i(1 +\alpha_i^2(a^2 +b^2+c^2) + 2\alpha_i^3abc\mathrm{\bf{i}})^{-\frac{1}{2}}\ dadbdc \right)\\ \nonumber
=&\mint{\times}\limits_{\mathbb{R}^3}\frac{-1}{abc}\mathrm{Im}\Bigg( \prod\limits_i(1 +\alpha_i^2(a^2 +b^2+c^2) + 2\alpha_i^3abc\mathrm{\bf{i}})^{-\frac{1}{2}} \Bigg)\ dadbdc \\ \nonumber
=&\mint{\times}\limits_{\mathbb{R}^3} \frac{-\sin\left(\arg\left(\prod\limits_i\left( 1+\alpha_i^2(a^2+b^2+c^2) + 2\alpha_i^3abc\mathrm{\bf{i}}\right)^{-\frac{1}{2}}\right)\right)}{abc \left | \prod\limits_i\Big( 1+\alpha_i^2(a^2+b^2+c^2) +2\alpha_i^3abc\mathrm{\bf{i}}\Big)^{\frac{1}{2}} \right |}dadbdc\\
=& \mint{\times}\limits_{\mathbb{R}^3} \frac{\sin\Big(\frac{1}{2}\sum\limits_i\arctan\left(\frac{2\alpha_i^3abc}
{1+\alpha_\mathrm{\bf{i}}^2\left(a^2+b^2+c^2\right)}\right)\Big)}{abc\prod\limits_i\left(\left(1+\alpha_i^2(a^2+b^2+c^2)\right)^2 +
4\alpha_i^6a^2b^2c^2\right)^{\frac{1}{4}}}
dadbdc \nonumber \\
=&\mint{\times}\limits_{\mathbb{R}^3}\frac{-\mathrm{Im}(\vphi(a,b,c))}{abc}\ dadbdc=\int\limits_{\mathbb{R}^3}\frac{-\mathrm{Im}(\vphi(a,b,c))}{abc}\ dadbdc,
\end{align}
where $\vphi$ is as in \eqref{define phi}. It is straightforward to verify that $\mathrm{Im}(\vphi(a,b,c) ) = O(abc)$, which implies that the above integrand is actually integrable, and thus justifies the last equality. We will estimate the above integral in several steps.
\subsubsection*{Step 1 - The integral is bounded from below on $B_1 = \Big\{x \in \RR^3 : \norm{x}^2 \leq \frac{1}{\norm{\alpha}^2_2} \Big\}$, the ball of radius $\frac{1}{\norm{\alpha}_2}$.}
First, we will prove that the following holds:
\begin{equation} \label{bound-arg}
\sin\left(\frac{1}{2}\sum\limits_i\arctan\left(\frac{2\alpha_i^3abc}{1+\alpha_i^2(a^2+b^2+c^2)}\right)\right) \geq \sum\limits_i\frac{\alpha_i^3abc}{1+\alpha_i^2(a^2+b^2+c^2)} - 3\norm{\alpha}_3^6(abc)^2.
\end{equation} 
Indeed, since $\sin(x) \geq x - x^2$ we have that

\begin{align*}
&\sin\left(\frac{1}{2}\sum\limits_i\arctan\left(\frac{2\alpha_i^3abc}{1+\alpha_i^2(a^2+b^2+c^2)}\right)\right)\\
\geq& \frac{1}{2}\sum\limits_i\arctan\left(\frac{2\alpha_i^3abc}{1+\alpha_i^2(a^2+b^2+c^2)}\right) - \frac{1}{4}\left(\sum\limits_i\arctan\left(\frac{2\alpha_i^3abc}{1+\alpha_i^2(a^2+b^2+c^2)}\right)\right)^2 \\
\geq&\frac{1}{2}\sum\limits_i\arctan\left(\frac{2\alpha_i^3abc}{1+\alpha_i^2(a^2+b^2+c^2)}\right) - \left(\sum\limits_i\alpha_i^3\right)^2(abc)^2.
\end{align*}
With the last inequality following from the fact that $\arctan^2(x) \leq x^2$.
Now, using the inequality $\arctan(x) \geq x - x^2$ yields
\begin{align*}
&\frac{1}{2}\sum\limits_i\arctan\left(\frac{2\alpha_i^3abc}{1+\alpha_i^2(a^2+b^2+c^2)}\right) - \left(\sum\limits_i\alpha_i^3\right)^2(abc)^2\\
\geq&\sum\limits_i\frac{\alpha_i^3abc}{1+\alpha_i^2(a^2+b^2+c^2)} - 2\left(\sum\limits_i\alpha_i^6\right)(abc)^2 - \left(\sum\limits_i\alpha_i^3\right)^2(abc)^2\\
\geq& \sum\limits_i\frac{\alpha_i^3abc}{1+\alpha_i^2(a^2+b^2+c^2)} - 3\norm{\alpha}_3^6(abc)^2.
\end{align*}
When $(a,b,c)\in B_1$, then $\alpha_i^2(a^2+b^2+c^2) \leq \frac{\alpha_i^2}{\norm{\alpha}_2^2} \leq 1$ and we have
\begin{equation} \label {bound arg 2}
\sum\limits_i\frac{\alpha_i^3abc}{1+\alpha_i^2(a^2+b^2+c^2)}- 3\norm{\alpha}_3^6(abc)^2 \geq \frac{1}{2}\norm{\alpha}_3^3abc - 3\norm{\alpha}_3^6(abc)^2.
\end{equation}
\\
Next, we note that for $(a,b,c) \in B_1$: $$1 \geq \frac{1}{\prod\limits_i\Big[\big(1+\alpha_i^2(a^2+b^2+c^2)\big)^2 +
4\alpha_i^6a^2b^2c^2\Big]^{\frac{1}{4}}} \geq \frac{1}{\prod\limits_i\Big[\left(1+\frac{\alpha_i^2}{\norm{\alpha}^2_2}\right)^2 + \frac{4\alpha_i^6}{\norm{\alpha}_2^6}\Big]^{\frac{1}{4}}}.$$ 
Since, in \eqref{normalization}, we've assumed that $\alpha_i \leq 1$ for each $i$ while $\sum\limits_{i} \alpha_i^2 \geq 1$, we may now lower bound the above by $\frac{1}{\prod\limits_i\left(1+\frac{7\alpha_i^2}{\norm{\alpha}_2^2}\right)^{-\frac{1}{4}}}$, and since $\ln\left(\prod\limits_i\left(1+\frac{7\alpha_i^2}{\norm{\alpha}_2^2}\right)\right)\leq\frac{7}{\norm{\alpha}_2^2}\sum\limits_i\alpha_i^2=7$, we have
\begin{equation} \label{bound-norm}
\frac{1}{\prod\limits_i\left(1+\frac{7\alpha_i^2}{\norm{\alpha}_2^2}\right)^{\frac{1}{4}}} \geq e^{-2}.\end{equation}
By combining \eqref{bound arg 2} and \eqref{bound-norm} into \eqref{define phi} we may see for $(a,b,c) \in B_1$ the following holds: $$ \text{Im}\left(\varphi(a,b,c)\right) \geq \left(\frac{1}{2}\norm{\alpha}_3^3abc - 3\norm{\alpha}_3^6\left(abc\right)^2\right)e^{-2} \mbox{ when } abc > 0.$$
Also, it is not hard to see that $\mathrm{Im}(\vphi)$ is an odd function, which makes $\frac{\text{Im}(\varphi(a,b,c))}{abc}$ even. Hence, if $H =\left\{(a,b,c)\in\RR^3|abc>0\right\}$, then
$$\int\limits_{B_1} \frac{\text{Im}(\varphi(a,b,c))}{abc} dadbdc= 2\int\limits_{B_1\cap H} \frac{\text{Im}(\varphi(a,b,c))}{abc} dadbdc.$$
Finally, since the volume of $B_1$ is $\frac{4\pi}{3\norm{\alpha}_2^3}$, and as long as $\norm{\alpha}^2_2$ is large enough:
\begin{align*}
&\int\limits_{B_1\cap H} \frac{-\text{Im}(\varphi(a,b,c))}{abc}dadbdc\geq \frac{1}{e^2}\int\limits_{B_1 \cap H}\left(\frac{1}{2}\norm{\alpha}_3^3 - 3\norm{\alpha}_3^6abc\right)dadbdc\\ 
\geq& \frac{\pi}{3e^2}\left(\frac{\norm{\alpha}_3}{\norm{\alpha}_2}\right)^3 - \frac{3\norm{\alpha}_3^6}{e^2}\int\limits_{B_1}|abc|\ dadbdc\geq \frac{\pi}{3e^2}\left(\frac{\norm{\alpha}_3}{\norm{\alpha}_2}\right)^3 - \frac{3}{e^2}\left(\frac{\norm{\alpha}_3}{\norm{\alpha}_2}\right)^6,
\end{align*}
where the last inequality uses the fact
$$\int\limits_{B_1}|abc|\ dadbdc \leq \frac{1}{\norm{\alpha}_2^6}.$$
Now, by using the properties of the normalization \eqref{normalization}, $\norm{\alpha}^3_3 \leq \norm{\alpha}^2_2$. Thus,
$$\left(\frac{\norm{\alpha}_3}{\norm{\alpha}_2}\right)^6 \leq \frac{1}{\norm{\alpha}_2}\left(\frac{\norm{\alpha}_3}{\norm{\alpha}_2}\right)^3,$$ and there exists a constant $c_1> 0$ such that whenever $\norm{\alpha}_2^2 > c_1$ then 
$$\int\limits_{B_1} \frac{-\text{Im}(\varphi(a,b,c))}{abc} dadbdc>\frac{\pi}{4e^2} \left(\frac{\norm{\alpha}_3}{\norm{\alpha}_2}\right)^3 > \frac{1}{10} \left(\frac{\norm{\alpha}_3}{\norm{\alpha}_2}\right)^3 .$$ 

\subsubsection*{Step 2 - The integrand is positive on $B_2 = \left\{x \in \RR^3 : \norm{x}^2 \leq \frac{1}{\norm{\alpha}_2^{22/12}} \right\}$, the ball of radius $\frac{1}{\norm{\alpha}_2^{11/12}}$.}

We first note that whenever $\left | \sum\limits_i\arctan\left(\frac{2\alpha_i^3abc}
 {1+\alpha_i^2(a^2+b^2+c^2)}\right) \right | < \pi$, then
 $$\sin\left(\arg\left(\prod\limits_i\left[ 1+\alpha_i^2(a^2+b^2+c^2) +2\alpha_i^3abc\mathrm{\bf{i}}\right]\right)\right)$$ has the same sign as that of $abc$, which in turn implies that $\frac{-\mathrm{Im}(\vphi(a,b,c))}{abc} > 0$. Thus, it will suffice to show that whenever $(a,b,c) \in B_2$ and $abc>0$, we have $\sum\limits_i\arctan\left(\frac{2\alpha_i^3abc}
  {1+\alpha_i^2(a^2+b^2+c^2)}\right) < \pi$.\\
\\
Indeed, for $(a,b,c) \in B_2$, $abc < \left(\norm{\alpha}_2^{-11/12}\right)^3 \leq \frac{1}{\norm{\alpha}_2^2}$ which, under the assumption $abc>0$, results in 
$$\sum\limits_i\arctan\left(\frac{2\alpha_i^3abc}
  {1+\alpha_i^2(a^2+b^2+c^2)}\right) \leq \sum\limits_i\frac{2\alpha_i^3abc}
    {1+\alpha_i^2(a^2+b^2+c^2)} \leq  \frac{2\norm{\alpha}_3^3}{\norm{\alpha}_2^2} < 2 <\pi,$$ as desired.

\subsubsection*{Step 3 - The absolute value of the integrand is negligible on the spherical shell $B \setminus B_2$ where $B$ is the unit ball in $\RR^3$.}

Observe that,   
\begin{equation} \label{norm3 bound}
\left|\frac{\sin\left(\frac{1}{2}\sum\limits_i\arctan\left(\frac{2\alpha^3abc}
{1+\alpha_i^2(a^2+b^2+c^2)}\right)\right)}
{abc}\right| \leq \frac{1}{2}\sum\limits_i \frac{\frac{2\alpha_i^3|abc|}{1+\alpha_i^2(a^2+b^2+c^2)}}{|abc|} \leq \norm{\alpha}_3^3.
\end{equation} 
On the other hand, for $(a,b,c) \notin B_2$ we have that :
\begin{align*}
\frac{1}{\prod\limits_i\Big[\left(1+\alpha_i^2(a^2+b^2+c^2)\right)^2 + 4\alpha_i^6a^2b^2c^2\Big]^{\frac{1}{4}}} \leq&
\frac{1}{\prod\limits_i(1+\alpha_i^2(a^2+b^2+c^2))^{\frac{1}{2}}} \\\leq&\prod\limits_i\left(1+\frac{\alpha_i^2}{\norm{\alpha}_2^{22/12}}\right)^{-\frac{1}{2}}.
\end{align*}
Using the elementary inequality $\ln(1+x) \geq x - \frac{x^2}{2}$ for $x>0$ yields: $$\ln\left(\prod\limits_i\left(1+\frac{\alpha_i^2}{\norm{\alpha}_2^{22/12}}\right)\right) = \sum\limits_i\ln\left(1+\frac{\alpha_i^2}{\norm{\alpha}_2^{22/12}}\right) \geq \norm{\alpha}_2^{2/12} - \frac{ \norm{\alpha}_4^4}{2\norm{\alpha}_2^{44/12}} \geq \norm{\alpha}_2^{2/12} -1$$
where the last inequality follows from the fact that $\norm{\alpha}_4^4 \leq \norm{\alpha}_2^2$. In turn, this implies $$\prod\limits_i\left(1+\frac{\alpha_i^2}{\norm{\alpha}_2^{22/12}}\right)^{-\frac{1}{2}} \leq e^{-\frac{\norm{\alpha}_2^{2/12}-1}{2}}.$$\\
Finally, since the volume of the unit ball is $\frac{4\pi}{3}$, this gives
\begin{equation} \label{step 2}
\int\limits_{B\setminus B_2} \left|\frac{\text{Im}(\varphi(a,b,c))}{abc}\right|dadbdc <\frac{4\pi}{3} \norm{\alpha}_3^3e^{-\frac{\norm{\alpha}^{2/12}_2-1}{2}}.
\end{equation} Consequently, there is a constant $c_2$ such that whenever $\norm{\alpha}_2^2 > c_2$ then 
\begin{equation*} 
\int\limits_{B\setminus B_2} \left|\frac{\text{Im}(\varphi(a,b,c))}{abc}\right|dadbdc\leq\frac{1}{100}\left(\frac{\norm{\alpha}_3}{\norm{\alpha}_2}\right)^3.
\end{equation*}

\subsubsection*{Step 4 - The integral is negligible outside of $B$.}

For $(a,b,c) \notin B$ we use \eqref{norm3 bound} to achieve $$\frac{\sin\Big(\frac{1}{2}\sum\limits_i\arctan\left(\frac{2\alpha_i^3abc}
{1+\alpha_i^2\left(a^2+b^2+c^2\right)}\right)\Big)}{abc\prod\limits_i\left(\left(1+\alpha_i^2(a^2+b^2+c^2)\right)^2 +
4\alpha_i^6a^2b^2c^2\right)^{\frac{1}{4}}} < \frac{\norm{\alpha}_3^3}{\prod\limits_i\left(1+\alpha_i^2(a^2+b^2+c^2)\right)^{\frac{1}{2}}}.$$
By passing to spherical coordinates we obtain: 
\begin{align*}
\int\limits_{\RR^3 \setminus B} \frac{1}{\prod\limits_i\left(1+\alpha_i^2(a^2+b^2+c^2)\right)^{\frac{1}{2}}}dadbdc= 
4\pi \int\limits_1^\infty \frac{r^2\ dr}{\prod\limits_i(1+\alpha_i^2r^2)^{\frac{1}{2}}}.
\end{align*}
Applying Lemma \ref{lem: integral bound} with $n=4$ and $T=1$, shows the existence of constants $C,c_3'>0$ such that whenever $\norm{\alpha}_2^2>c_3'$,
\begin{equation*} 
\int\limits_1^\infty \frac{r^2\ dr}{\prod\limits_i(1+\alpha_i^2r^2)^{\frac{1}{2}}} \leq C\left(\frac{1}{\norm{\alpha}_2^2}\right)^2 = C\frac{1}{\norm{\alpha}_2^4}.
\end{equation*}
Thus, there exists a constant $c_3=\max(c_3',(16C)^2)$ such that whenever $\norm{\alpha}_2^2 > c_3$ then 
\begin{equation*}
\int\limits_{\RR^3 \setminus B}\left|\frac{\text{Im}(\varphi(a,b,c))}{abc}\right|dadbdc \leq\frac{1}{100}\left(\frac{\norm{\alpha}_3}{\norm{\alpha}_2}\right)^3.
\end{equation*}

\subsubsection*{Final Step - $\int\limits_{\RR^3} \frac{-\text{Im}(\varphi(a,b,c))}{abc}dadbdc \geq \frac{1}{100}\left(\frac{\norm{\alpha}_3}{\norm{\alpha}_2}\right)^3$}

We may now decompose the integral 
$$\int\limits_{\RR^3} \frac{-\text{Im}(\varphi(a,b,c))}{abc} dadbdc= \int\limits_{B_2} \frac{-\text{Im}(\varphi(a,b,c))}{abc}dadbdc + \int\limits_{\RR^3 \setminus B_2} \frac{-\text{Im}(\varphi(a,b,c))}{abc}dadbdc$$
Letting $\norm{\alpha}_2^2 > \max(c_1,c_2,c_3)$ steps 1 and 2 show that 
\begin{equation}\label{big in ball}
\int\limits_{B_2} \frac{-\text{Im}(\varphi(a,b,c))}{abc}dadbdc \geq \frac{1}{10}\left(\frac{\norm{\alpha}_3}{\norm{\alpha}_2}\right)^3,
\end{equation}
while steps 2 and 3 show
\begin{equation*} \label{small not in ball}
\int\limits_{\RR^3 \setminus B_2}\left| \frac{\text{Im}(\varphi(a,b,c))}{abc}\right|dadbdc \leq \frac{2}{100}\left(\frac{\norm{\alpha}_3}{\norm{\alpha}_2}\right)^3.
\end{equation*}
The required bound then follows by combining the above two estimates.

\end{proof}
\subsection{Arbitrary $0 < p < 1$}
We now consider the case for arbitrary $p$. First, we would like to derive bounds on the behavior of $t_{p,\alpha}$, which constitute the following lemma.
\begin{lemma} \label {lem: tp}
Let $p \in (0,1)$ and denote by $\Phi$ the cumulative distribution function of the standard Gaussian. If $t_p = \Phi^{-1}(p)$ then $\norm{\alpha}_2t_p - k_p\leq t_{p,\alpha} \leq \norm{\alpha}_2t_p + k_p$, for a constant $k_p$ depending only on $p$. Furthermore, if $p':=\Phi\left(\frac{t_{p,\alpha}}{\norm{\alpha}_2}\right)$ then $|p-p'| \leq 3\left(\frac{\norm{\alpha}_3}{\norm{\alpha}_2}\right)^3$.
\end{lemma}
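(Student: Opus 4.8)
The plan is to compare the law of $Y:=\langle X_1,X_2\rangle$ with the centered Gaussian of matching variance, via the one–dimensional Berry--Esseen inequality \eqref{prelim berry eseen}, and to read off both assertions from the resulting quantile estimate. Writing each coordinate as $X_k^i=\sqrt{\alpha_i}\,g_k^i$ with $\{g_k^i\}$ i.i.d.\ standard Gaussians, we have $Y=\sum_{i=1}^{d}\alpha_i\,g_1^i g_2^i$, a sum of independent mean–zero terms with $\EE\bigl[(\alpha_i g_1^i g_2^i)^2\bigr]=\alpha_i^2$ and $\EE\bigl|\alpha_i g_1^i g_2^i\bigr|^3=\alpha_i^3\bigl(\EE|g|^3\bigr)^2=\tfrac{8}{\pi}\alpha_i^3$. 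Hence $\mathrm{Var}(Y)=\norm{\alpha}_2^2$, and applying \eqref{prelim berry eseen} to $S:=Y/\norm{\alpha}_2$ gives
\[
\sup_{x\in\RR}\bigl|\PP(S<x)-\Phi(x)\bigr|\ \le\ \frac{8}{\pi}\,\frac{\norm{\alpha}_3^{3}}{\norm{\alpha}_2^{3}}\ \le\ 3\Bigl(\frac{\norm{\alpha}_3}{\norm{\alpha}_2}\Bigr)^{3}\ =:\ \eta ,
\]
using $\tfrac{8}{\pi}<3$. Since $\alpha_i\le 1$ we also have $\norm{\alpha}_3^{3}\le\norm{\alpha}_2^{2}$, and therefore the crude bound $\eta\le\tfrac{8}{\pi}\,\norm{\alpha}_2^{-1}$, which is what lets us turn an error of size $\eta$ for $S$ into an error of size $O_p(1)$ for $Y$.

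The bound on $p'$ is immediate. By definition of $t_{p,\alpha}$ we have $\PP(Y\ge t_{p,\alpha})=p$, i.e.\ $\PP\bigl(S<t_{p,\alpha}/\norm{\alpha}_2\bigr)=1-p$, so evaluating the displayed inequality at $x=t_{p,\alpha}/\norm{\alpha}_2$ yields $\bigl|\Phi\bigl(t_{p,\alpha}/\norm{\alpha}_2\bigr)-(1-p)\bigr|\le\eta$. Writing $p'$ for the edge probability that the rescaled threshold $t_{p,\alpha}/\norm{\alpha}_2$ induces under the exactly Gaussian model — that is, $1-p':=\Phi\bigl(t_{p,\alpha}/\norm{\alpha}_2\bigr)$, which is the quantity $p'$ of the statement written so as to be manifestly well defined — the left–hand side equals $|p-p'|$, and hence $|p-p'|\le\eta\le 3\bigl(\norm{\alpha}_3/\norm{\alpha}_2\bigr)^{3}$. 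No largeness of $\norm{\alpha}_2$ is used here.

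For the bound on $t_{p,\alpha}$ itself one must invert $\Phi$, and this is the only step requiring care, since $\Phi^{-1}$ is not uniformly Lipschitz on $(0,1)$. From $\bigl|\Phi\bigl(t_{p,\alpha}/\norm{\alpha}_2\bigr)-(1-p)\bigr|\le\eta$ and $\eta\le\tfrac{8}{\pi}\norm{\alpha}_2^{-1}$, pick $c_p$ so that $\norm{\alpha}_2>c_p$ forces $\eta<\tfrac12\min(p,1-p)$; then both $\Phi\bigl(t_{p,\alpha}/\norm{\alpha}_2\bigr)$ and $1-p$ lie in a fixed compact subinterval of $(0,1)$ on which $\Phi^{-1}$ is Lipschitz with some constant $L_p<\infty$, so $\bigl|\,t_{p,\alpha}/\norm{\alpha}_2-\Phi^{-1}(1-p)\,\bigr|\le L_p\,\eta$. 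Multiplying by $\norm{\alpha}_2$ and using $\norm{\alpha}_3^{3}\le\norm{\alpha}_2^{2}$ gives $\bigl|\,t_{p,\alpha}-\norm{\alpha}_2\,\Phi^{-1}(1-p)\,\bigr|\le\tfrac{8}{\pi}L_p$, i.e.\ the asserted two–sided bound with $k_p:=\tfrac{8}{\pi}L_p$ (here $\Phi^{-1}(1-p)=-\Phi^{-1}(p)=-t_p$). In the complementary range $\norm{\alpha}_2\le c_p$ the inequality is trivial: $\norm{\alpha}_2|t_p|\le c_p|t_p|$, while $\mathrm{Var}(Y)=\norm{\alpha}_2^{2}\le c_p^{2}$ together with Chebyshev's inequality (or \eqref{berenstein type}) bounds $|t_{p,\alpha}|$ by a constant depending only on $p$; enlarging $k_p$ accordingly finishes the proof. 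The only real obstacle, then, is this case split caused by the blow–up of $(\Phi^{-1})'$ near the endpoints of $(0,1)$; the rest is a direct invocation of Berry--Esseen and the elementary inequality $\norm{\alpha}_3^{3}\le\norm{\alpha}_2^{2}$.
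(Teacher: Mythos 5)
Your proof is correct and follows the same core strategy as the paper: decompose $\langle X_1,X_2\rangle$ into the independent coordinate products $\alpha_i g_1^i g_2^i$, compute the second and third absolute moments, and apply the one-dimensional Berry--Esseen inequality \eqref{prelim berry eseen} to get $\sup_x|\PP(S<x)-\Phi(x)|\le 3(\norm{\alpha}_3/\norm{\alpha}_2)^3$, from which the bound on $|p-p'|$ is immediate. Where you diverge is the inversion step for the first claim: the paper applies Lagrange's theorem with $m=\inf_{s\in[t_p,t]}\Phi'(s)$ and asserts $t_{p,\alpha}\in\norm{\alpha}_2 t_p\pm 3/m$, leaving implicit why $m$ is bounded below by a constant depending only on $p$ (which requires knowing a priori that $t$ stays in a bounded neighborhood of $t_p$); you instead restrict to $\norm{\alpha}_2>c_p$ so that the Berry--Esseen error is below $\tfrac12\min(p,1-p)$, invoke Lipschitz continuity of $\Phi^{-1}$ on a fixed compact subinterval, and dispose of the complementary range $\norm{\alpha}_2\le c_p$ by Chebyshev. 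This is more careful than the paper's own argument and actually closes a small gap in it. One cosmetic point: your $\Phi$ is the genuine CDF while the paper's proof implicitly uses $\Phi$ for the upper tail $\PP(Z>\cdot)$ (it writes $\PP(Z>t_p)=p$ for $t_p=\Phi^{-1}(p)$), so your constant $\Phi^{-1}(1-p)=-t_p$ is the same quantile the paper intends; since the lemma is only ever used through $|t_p|$, the sign discrepancy is harmless, but it is worth flagging that you are correcting the statement's convention rather than contradicting it.
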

\begin{proof}
Let $W = \frac{\langle X_1 , X_2 \rangle}{\norm{\alpha}_2}$ where $X_1,X_2$ are defined as in the beginning of the section. We may consider $\langle X_1, X_2 \rangle$ as sum of independent random variables $X_1^i\cdot X_2^i$, where for each $1\leq i \leq d$, $X_1^i$ and $X_2^i$ are independently distributed as $\mathcal{N}(0,\alpha_i)$. It then holds that $\EE[X_1^i\cdot X_2^i] = 0$, $\EE[(X_1^i\cdot X_2^i)^2] = \alpha_i^2$. The absolute third moments are given as a product of absolute third moments of Gaussians. That is, $\EE[|X_1^i\cdot X_2^i|^3] = \frac{8\alpha_i^3}{\pi} < 3\alpha_i^3$.\\
\\
Let $t$ be such that $p = \PP(W \geq t)$, in which case we also have $t_{p,\alpha} = t\norm{\alpha}_2$. Note that $$\frac{\sum\limits_i\EE[|X_1^i\cdot X_2^i|^3]}{\left(\sum\limits_i\EE[(X_1^i\cdot X_2^i)^2]\right)^{3/2}} \leq \frac{3\norm{\alpha}_3^3}{\norm{\alpha}_2^3}.$$ Thus, if $Z$ is a standard normal random variable, Berry-Esseen's inequality, \eqref{prelim berry eseen}, yields for every $s \in \RR$: 
\begin{equation*}
|\PP(W > s) - \PP(Z > s)| \leq \frac{3\norm{\alpha}_3^3}{\norm{\alpha}_2^3}.
\end{equation*}
If $t_p = \Phi^{-1}(p)$ then $\PP(Z > t_p) = p$ and 
$$|\Phi(t_p)-\Phi(t)|=|\PP(Z > t_p) -\PP(Z > t)|=|\PP(W > t) -\PP(Z > t)|\leq \frac{3\norm{\alpha}_3^3}{\norm{\alpha}_2^3}.$$
Since $|p - p'| = |\Phi(t_p)-\Phi(t)|$, this shows the second part of the statement. To finish the proof, denote $m= \inf\limits_{s\in[t_p,t]}(\Phi'(s))$. By Lagrange's theorem
$$m|t_p - t|\leq |\Phi(t_p)-\Phi(t)|\leq \frac{3\norm{\alpha}_3^3}{\norm{\alpha}_2^3} \leq \frac{3}{\norm{\alpha}_2},$$
which shows $t_{p,\alpha}\in \norm{\alpha}_2t_p \pm \frac{3}{m}$.
\end{proof}
Before proceeding, we need some further definitions. Let $X'_1,X'_2,X'_3$ be independent copies of $X_1,X_2,X_3$ and consider the joint distribution $(\langle X_1,X_2\rangle,\langle X_1',X_3\rangle,\langle X_2',X_3'\rangle)$. This distribution has independent coordinates. Denote its density by $g$ and corresponding characteristic function by $\psi$.
If $N_1,N_2$ are two independent standard Gaussians then the characteristic function of their product can be derived from \eqref{char of wishart} as $\EE e^{\mathrm{\bf{i}}tN_1N_2} = \left(1+t^2\right)^{-\frac{1}{2}}$. From this, it follows that the characteristic function of $\langle X_1,X_2\rangle$ is $\EE e^{\mathrm{\bf{i}}t\langle X_1,X_2\rangle} = \prod\limits_i\left(1+\alpha_i^2t^2\right)^{-\frac{1}{2}}$
, and we have, by independence
\begin{equation} \label{define psi}
\psi(a,b,c) = \prod\limits_i\left((1+\alpha_i^2a^2)(1+\alpha_i^2b^2)(1+\alpha_i^2c^2)\right)^{-\frac{1}{2}}.
\end{equation}
We denote by $\psi_1\left(a',b',c'\right) = \psi\left(\frac{a'}{\norm{\alpha}_2},\frac{b'}{\norm{\alpha}_2},\frac{c'}{\norm{\alpha}_2}\right)$ and $\varphi_1\left(a',b',c'\right) = \vphi\left(\frac{a'}{\norm{\alpha}_2},\frac{b'}{\norm{\alpha}_2},\frac{c'}{\norm{\alpha}_2}\right)$ for the characteristic function $\vphi$, \eqref{define phi}.
The following result will help us relate the independent version of the distribution and the original one.
\begin{lemma} \label{lem: coordinate free version}
There exist absolute constants $c,C,\eps>0$ such that whenever $\norm{\alpha}_2^2>c$ then $$\int\limits_{\RR^3}|\mathrm{Re}(\varphi_1) - \psi_1|da'db'dc' \leq C \left(\frac{\norm{\alpha}_3}{\norm{\alpha}_2}\right)^{3+\eps}.$$
\end{lemma}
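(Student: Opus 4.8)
The plan is to separate the phase of $\vphi_1$ from its modulus. Writing $\mathrm{Re}(\vphi_1)=|\vphi_1|\cos(\arg\vphi_1)$ and using $1-\cos\theta\le\theta^2/2$ gives the pointwise estimate
\[|\mathrm{Re}(\vphi_1)-\psi_1|\le\bigl||\vphi_1|-\psi_1\bigr|+\tfrac12|\vphi_1|(\arg\vphi_1)^2,\]
and I would bound the two integrals separately. It is convenient to set $\tilde\alpha_i:=\alpha_i/\norm{\alpha}_2$, so that $\sum_i\tilde\alpha_i^2=1$ and $\max_i\tilde\alpha_i^2=\norm{\alpha}_2^{-2}$ (recall $\norm{\alpha}_\infty=1$); writing $x=(a',b',c')$, $u=\norm{x}^2$, $q=a'^2b'^2+b'^2c'^2+a'^2c'^2$, $w=a'b'c'$, we have $\vphi_1(x)=\prod_i(1+\tilde\alpha_i^2u+2\tilde\alpha_i^3w\,\mathrm{i})^{-1/2}$ and $\psi_1(x)=\prod_i(1+\tilde\alpha_i^2u+\tilde\alpha_i^4q+\tilde\alpha_i^6w^2)^{-1/2}$. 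The auxiliary fact I use throughout is a sharpening of \eqref{bound}: its inductive proof, run with $\max_j\tilde\alpha_j^2$ in place of the crude bound $1$, gives $\sum_{|S|=m}\prod_{j\in S}\tilde\alpha_j^2\ge\frac1{m!}\prod_{k=0}^{m-1}(1-k\norm{\alpha}_2^{-2})\ge(m!\,2^m)^{-1}$ once $\norm{\alpha}_2^2\ge2m$; hence, for each fixed $m$ and $\norm{\alpha}_2$ larger than an absolute constant, $\prod_i(1+\tilde\alpha_i^2r^2)\ge c_m(1+r^2)^m$ for all $r$, and also $\prod_i(1+\tilde\alpha_i^2r^2)\ge e^{r^2/2}$ for $|r|\le\norm{\alpha}_2$ (there $\tilde\alpha_i^2r^2\le1$ and $\ln(1+t)\ge t/2$, so $\sum_i\ln(1+\tilde\alpha_i^2r^2)\ge\tfrac12r^2$).

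For the phase term I would use that $|\arctan t|\le|t|$ yields the uniform bound $|\arg\vphi_1(x)|=\bigl|\tfrac12\sum_i\arctan\tfrac{2\tilde\alpha_i^3w}{1+\tilde\alpha_i^2u}\bigr|\le\bigl(\sum_i\tilde\alpha_i^3\bigr)|w|=(\norm{\alpha}_3/\norm{\alpha}_2)^3|w|$ on all of $\RR^3$. Since $|\vphi_1|\le\prod_i(1+\tilde\alpha_i^2u)^{-1/2}$ and $w^2\le u^3/27$, passing to polar coordinates and using $\prod_i(1+\tilde\alpha_i^2r^2)\ge c_{10}(1+r^2)^{10}$ gives $\int_{\RR^3}|\vphi_1|w^2\le C\int_0^\infty r^8(1+r^2)^{-5}\,dr\le C'$; hence this term is $\le\tfrac12(\norm{\alpha}_3/\norm{\alpha}_2)^6\cdot C'$, which is already smaller than $(\norm{\alpha}_3/\norm{\alpha}_2)^{3+\eps}$ for any $\eps\le3$.

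For the modulus term $\bigl||\vphi_1|-\psi_1\bigr|$ I would localize to $B_R=\{\norm{x}\le R\}$ with $R:=\norm{\alpha}_2^{1/3}$. Outside $B_R$ both integrals are super-polynomially small, since $|\vphi_1|\le\prod_i(1+\tilde\alpha_i^2\norm{x}^2)^{-1/2}$ is $\le e^{-\norm{x}^2/4}$ for $\norm{x}\le\norm{\alpha}_2$ and $\le c_{10}^{-1/2}\norm{x}^{-10}$ for $\norm{x}>\norm{\alpha}_2$, so $\int_{B_R^c}|\vphi_1|\le C(e^{-R^2/8}+\norm{\alpha}_2^{-7})\le\norm{\alpha}_2^{-4}$ for $\norm{\alpha}_2$ large, and likewise for $\psi_1$ (which has the product form $\psi_1=g_1(a')g_1(b')g_1(c')$ with $g_1(s)=\prod_i(1+\tilde\alpha_i^2s^2)^{-1/2}$, each factor bounded as above). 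On $B_R$ I write $|\vphi_1|-\psi_1=\psi_1(e^D-1)$ with $D=\ln|\vphi_1|-\ln\psi_1$. Expanding the logarithms, $\ln|\vphi_1|=-\tfrac12\sum_i\ln(1+\tilde\alpha_i^2u)-\tfrac14\sum_i\ln(1+\tfrac{4\tilde\alpha_i^6w^2}{(1+\tilde\alpha_i^2u)^2})$ while $\ln\psi_1=-\tfrac12\sum_i\ln(1+\tilde\alpha_i^2u)-\tfrac12\sum_i\ln(1+\tfrac{\tilde\alpha_i^4q+\tilde\alpha_i^6w^2}{1+\tilde\alpha_i^2u})$, so the quadratic term cancels and $\ln(1+t)\le t$ yields the global bound $|D|\le\tfrac12q\sum_i\tilde\alpha_i^4+\tfrac32w^2\sum_i\tilde\alpha_i^6\le(\norm{\alpha}_4/\norm{\alpha}_2)^4(\tfrac12q+\tfrac32w^2)$, where $\sum_i\tilde\alpha_i^4=(\norm{\alpha}_4/\norm{\alpha}_2)^4$ and $\sum_i\tilde\alpha_i^6=\norm{\alpha}_6^6/\norm{\alpha}_2^6\le\norm{\alpha}_4^4/\norm{\alpha}_2^6\le(\norm{\alpha}_4/\norm{\alpha}_2)^4$ (using $\norm{\alpha}_\infty=1$ and $\norm{\alpha}_2\ge1$). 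On $B_R$ one has $\tfrac12q+\tfrac32w^2\le R^6=\norm{\alpha}_2^2$, hence $|D|\le\norm{\alpha}_4^4/\norm{\alpha}_2^2\le1$, so $|e^D-1|\le e|D|$ there and
\[\int_{B_R}\bigl||\vphi_1|-\psi_1\bigr|\le e\int_{\RR^3}\psi_1|D|\le e\Bigl(\tfrac{\norm{\alpha}_4}{\norm{\alpha}_2}\Bigr)^4\int_{\RR^3}\psi_1\bigl(\tfrac12q+\tfrac32w^2\bigr)\le C\Bigl(\tfrac{\norm{\alpha}_4}{\norm{\alpha}_2}\Bigr)^4,\]
the last integral being an absolute constant: by the product structure of $\psi_1$ it is a combination of $\int_\RR g_1(s)s^{2k}\,ds$ with $k\le1$, each finite and $\alpha$-independent since $g_1(s)\le c_4^{-1/2}(1+s^2)^{-2}$.

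Collecting the three contributions, $\int_{\RR^3}|\mathrm{Re}(\vphi_1)-\psi_1|\le C\bigl[(\norm{\alpha}_3/\norm{\alpha}_2)^6+(\norm{\alpha}_4/\norm{\alpha}_2)^4+\norm{\alpha}_2^{-4}\bigr]$, and since $\norm{\alpha}_\infty=1$ forces $\norm{\alpha}_4^4=\sum_i\alpha_i^4\le\sum_i\alpha_i^3=\norm{\alpha}_3^3\le\norm{\alpha}_3^4$ while $\norm{\alpha}_3\ge1$, each of the three terms is dominated by $(\norm{\alpha}_3/\norm{\alpha}_2)^4$; so the statement holds with $\eps=1$ and $c,C$ the absolute constants accumulated along the way. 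I expect the main nuisance to be that \eqref{bound} becomes vacuous under the normalization $\norm{\tilde\alpha}_2=1$, forcing one to re-derive it while tracking the size $\max_i\tilde\alpha_i^2=\norm{\alpha}_2^{-2}$, together with the accompanying (routine but tedious) Gaussian/polynomial tail estimates; the only step carrying real content is peeling the phase of $\vphi_1$ off first — it sits at the smaller scale $(\norm{\alpha}_3/\norm{\alpha}_2)^6$ — so that the residual discrepancy between the two moduli is precisely the one governed by $\norm{\alpha}_4$.
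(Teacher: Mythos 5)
Your proof is correct, and it reaches the stated bound by a cleaner route than the paper's, with a strictly better exponent. Both arguments rest on the same two ideas — peel the phase of $\varphi_1$ off its modulus (the phase contributes only at the smaller scale $(\norm{\alpha}_3/\norm{\alpha}_2)^6$), and observe that the quadratic parts of $\ln|\varphi_1|$ and $\ln\psi_1$ cancel, leaving a residue governed by $(\norm{\alpha}_4/\norm{\alpha}_2)^4$ — but the executions differ. The paper passes through $|\psi_1-\mathrm{Re}(\varphi_1)|\le|\ln\psi_1-\ln\mathrm{Re}(\varphi_1)|$ and Taylor-expands both logarithms with $O(\cdot)$ remainders, localizing to a ball of radius $(\norm{\alpha}_2/\norm{\alpha}_3)^{0.005}$ and invoking Lemma \ref{lem: integral bound} with $n>606$ for the tail, ending with $\eps=0.015$. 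You instead go through $|\varphi_1|$ via $1-\cos\theta\le\theta^2/2$, compute the difference of logarithms exactly (so the cancellation of the $-\tfrac12\tilde\alpha_i^2u$ terms is an identity rather than an expansion), and control $\bigl||\varphi_1|-\psi_1\bigr|=\psi_1|e^D-1|$ multiplicatively; your tail estimate on $\{\norm{x}>\norm{\alpha}_2^{1/3}\}$ uses a rescaled form of \eqref{bound} that tracks $\max_i\tilde\alpha_i^2=\norm{\alpha}_2^{-2}$. That last ingredient is not optional: as you observe, Lemma \ref{lem: integral bound} as stated is vacuous for the normalized weights $\tilde\alpha_i=\alpha_i/\norm{\alpha}_2$ (its hypothesis $\norm{\gamma}_2^2>c_n$ cannot hold when $\norm{\gamma}_2^2=1$), so the paper's own tail step tacitly needs exactly the sharpening you supply. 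Your bookkeeping buys the conclusion with $\eps=1$, i.e. $C(\norm{\alpha}_3/\norm{\alpha}_2)^4$, which is the natural exponent here since $(\norm{\alpha}_4/\norm{\alpha}_2)^4\le\norm{\alpha}_3^3/\norm{\alpha}_2^4$ under the normalization \eqref{normalization}; this is more than enough for the application in Lemma \ref{lem: lower bound general}.
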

\begin{proof}
Note that since $\psi_1$ and $\varphi_1$ are characteristic functions, then $|\psi_1|,|\re(\varphi_1)| \leq 1$. So, $|\psi_1 - \re(\varphi_1)| \leq |\ln(\psi_1)-\ln(\re(\varphi_1))|$. Now, let 
$$B_{0.01} = \left \{x \in \RR^3 : ||x||^2 \leq \left(\frac{\norm{\alpha}_2}{\norm{\alpha}_3}\right)^{0.01} \right \}.$$
 Clearly, $|\re(\varphi_1)| \leq |\varphi_1| = \prod\limits_i\left((1+\frac{\alpha_i^2}{\norm{\alpha}_2^2}(a'^2+b'^2+c'^2))^2 +
 4\frac{\alpha_i^6}{\norm{\alpha}_2^6}a'^2b'^2c'^2\right)^{-\frac{1}{4}}$, and since
 \begin{equation*}
 |a'b'c'| \leq \left(a'^2+b'^2+c'^2\right)^{\frac{3}{2}}\leq\left(\frac{\norm{\alpha}_2}{\norm{\alpha}_3}\right)^{0.015} \mbox{ for } (a',b',c') \in B_{0.01},
 \end{equation*}
 we have
 $$\left|\arg\left(1+\frac{\alpha_i^2}{\norm{\alpha}_2^2}(a'^2+b'^2+c'^2)+2\frac{\alpha_i^3}{\norm{\alpha}_2^3}a'b'c'\mathrm{\bf{i}}\right)\right| \leq 2\frac{\alpha_i^3}{\norm{\alpha}_2^3}|a'b'c'|\leq 2\frac{\alpha_i^3}{\norm{\alpha}_2^3}\left(\frac{\norm{\alpha}_2}{\norm{\alpha}_3}\right)^{0.015}.$$ By using the inequality $\cos(x) \geq 1 - x^2$, we achieve
 \begin{align*}
 \re(\varphi_1) \geq \cos\left(2\frac{\norm{\alpha}_3^3}{\norm{\alpha}_2^3}\left(\frac{\norm{\alpha}_2}{\norm{\alpha}_3}\right)^{0.015}\right)|\vphi_1|\geq \left(1 - 4\frac{\norm{\alpha}_3^6}{\norm{\alpha}_2^6}\left(\frac{\norm{\alpha}_2}{\norm{\alpha}_3}\right)^{0.03}\right)\left|\varphi_1\right|.
  \end{align*}
Using the above, together with the triangle inequality gives
\begin{equation} \label{in ball}
\left|\ln(\psi_1)-\ln(\re(\varphi_1))\right| \leq |\ln(\psi_1)-\ln(|\varphi_1|)| + \left|\ln\left(1 - 4\frac{\norm{\alpha}_3^6}{\norm{\alpha}_2^6}\left(\frac{\norm{\alpha}_2}{\norm{\alpha}_3}\right)^{0.03}\right)\right|.
\end{equation}
For $x\in (0,\frac{1}{2})$ we have the inequality $|\ln(1-x)|\leq 2x$, thus, as long as $\norm{\alpha}_2^2$ is large enough
\begin{equation*} 
\left|\ln\left(1 - 4\frac{\norm{\alpha}_3^6}{\norm{\alpha}_2^6}\left(\frac{\norm{\alpha}_2}{\norm{\alpha}_3}\right)^{0.03}\right)\right| \leq  8\frac{\norm{\alpha}_3^6}{\norm{\alpha}_2^6}\left(\frac{\norm{\alpha}_2}{\norm{\alpha}_3}\right)^{0.03},
\end{equation*}
and
\begin{equation} \label{ln-remainder}
8\int\limits_{B_{0.01}}\frac{\norm{\alpha}_3^6}{\norm{\alpha}_2^6}\left(\frac{\norm{\alpha}_2}{\norm{\alpha}_3}\right)^{0.03}da'db'dc' \leq 32\pi\frac{\norm{\alpha}_3^6}{\norm{\alpha}_2^6}\left(\frac{\norm{\alpha}_2}{\norm{\alpha}_3}\right)^{0.045} = 32\pi\left(\frac{\norm{\alpha}_3}{\norm{\alpha}_2}\right)^{5.955}.
\end{equation}
By using the inequality $|\ln(1+x)-x| \leq x^2$ for $x>0$ we bound $\ln(\psi_1)$ with
\begin{align*}
\ln(\psi_1(a',b',c'))=&-\frac{1}{2}\sum\limits_i\left[\ln\left(1+\frac{\alpha_i^2a'^2}{\norm{\alpha}_2^2}\right)+\ln\left(1+\frac{\alpha_i^2b'^2}{\norm{\alpha}_2^2}\right)+\ln\left(1+\frac{\alpha_i^2c'^2}{\norm{\alpha}_2^2}\right)\right]\\=& -\frac{1}{2}\left(a'^2+b'^2+c'^2\right) + O\left(\frac{\norm{\alpha}_4^4}{\norm{\alpha}_2^4}\right)\left(a'^4+b'^4+c'^4\right).
\end{align*}
Similar considerations show
\begin{align} \label{char taylor}
\ln(|\varphi_1|) =& -\frac{1}{4}\sum \ln \left(1 + \frac{2\alpha_i^2}{\norm{\alpha}_2^2}(a'^2+b'^2+c'^2)+\frac{\alpha_i^4}{\norm{\alpha}_2^4}(a'^2+b'^2+c'^2)^2+4\frac{\alpha_i^6}{\norm{\alpha}_2^{6}}a'^2b'^2c'^2\right)\nonumber\\=& -\frac{1}{2}\left(a'^2+b'^2+c'^2\right) - \frac{\norm{\alpha}_4^4}{4\norm{\alpha}_2^4}(a'^2+b'^2+c'^2)^2 - \frac{\norm{\alpha}_6^6}{\norm{\alpha}_2^{6}}a'^2b'^2c'^2\nonumber\\
+& O\left(\frac{\norm{\alpha}_4^4}{\norm{\alpha}_2^4}\right)\left((a'^2+b'^2+c'^2)^2 + \left(a'^2+b'^2+c'^2\right)^4 + a'^4b'^4c'^4\right)\nonumber\\
=&-\frac{1}{2}\left(a'^2+b'^2+c'^2\right) + O\left(\frac{\norm{\alpha}_4^4}{\norm{\alpha}_2^4}\right)\left(1 + \left(a'^2+b'^2+c'^2\right)^6\right).
\end{align}
The above shows the existence of a constant $C>0$ such that
\begin{align} \label{ln-in-ball}
\nonumber &\int\limits_{B_{0.01}}|\ln(\psi_1) - \ln(|\varphi_1|)| \leq C\left(\frac{\norm{\alpha}_4}{\norm{\alpha}_2}\right)^4\int\limits_{B_{0.01}}(a'^2+b'^2+c'^2)^6\ da'db'dc'\\ 
=& 4\pi C\left(\frac{\norm{\alpha}_4}{\norm{\alpha}_2}\right)^4\left(\frac{\norm{\alpha}_2}{\norm{\alpha}_3}\right)^{0.075}\leq 4\pi C\left(\frac{\norm{\alpha}_3}{\norm{\alpha}_2}\right)^4\left(\frac{\norm{\alpha}_2}{\norm{\alpha}_3}\right)^{0.075} = 4\pi C\left(\frac{\norm{\alpha}_3}{\norm{\alpha}_2}\right)^{3.925}.
\end{align}
By combining \eqref{ln-remainder},\eqref{ln-in-ball} and \eqref{in ball}, we obtain
$$\int\limits_{B_{0.01}}|\psi_1 - \re(\varphi_1)|da'db'dc' \leq \pi(4 C+32)\left(\frac{\norm{\alpha}_3}{\norm{\alpha}_2}\right)^{3.925}.$$
To bound the integral in $\RR^3 \setminus B_{0.01}$ we proceed in similar fashion to step 3 in Lemma \ref{lem: lower bound half}. First, note that
$$|\varphi_1|,|\psi_1| \leq \frac{1}{\prod\limits_i \left(1+\frac{\alpha_i^2}{\norm{\alpha}_2^2}(a'^2+b'^2+c'^2)\right)^{\frac{1}{2}}}.$$
Denoting $r = \sqrt{a'^2+b'^2+c'^2}, T = \left(\frac{\norm{\alpha}_2}{\norm{\alpha}_3}\right)^{0.005}$ and passing to spherical coordinates yields
$$\int\limits_{\RR^3 \setminus B_{0.01}}|\re(\varphi_1) - \psi_1|da'db'dc' \leq \int\limits_{\RR^3 \setminus B_{0.01}}|\re(\varphi_1)| + |\psi_1|da'db'dc' \leq  8\pi\int\limits_{T}^{\infty}\frac{r^2\ d r}{\prod\limits_i\left(1+\frac{\alpha_i^2}{\norm{\alpha}_2^2}r^2\right)^{\frac{1}{2}}}.$$
Invoking Lemma \ref{lem: integral bound} with $n>606$ shows the existence of constants $C,c>0$ such that
$$\int\limits_{T}^{\infty}\frac{r^2\ d r}{\prod\limits_i\left(1+\frac{\alpha_i^2}{\norm{\alpha}_2^2}r^2\right)^{\frac{1}{2}}} \leq CT^{-603} = C\left(\frac{\norm{\alpha}_3}{\norm{\alpha}_2}\right)^{3.015},$$
whenever $\norm{\alpha}_2^2>c$. This concludes the proof when we take $\eps = 0.015$.
\end{proof}
We are now ready to bound from below the probability of an induced triangle occurring in the general setting. Set $p \in (0,1)$ and $t := t_{p,\alpha}$. We are interested in the event $$\Big\{ \min\left(\langle X_1,X_2\rangle,\langle X_1,X_3\rangle,\langle X_2,X_3\rangle\right) > t\Big\}.$$
As before, let $f$ be the joint density of $(\langle X_1,X_2\rangle,\langle X_1,X_3\rangle,\langle X_2,X_3\rangle)$ and consider the integral:
$$I_p := \int\limits_{\RR^3} f(x,y,z) \mathrm{sgn}(x-t)\mathrm{sgn}(y-t)\mathrm{sgn}(z-t)\ dxdydz.$$
Note that, in the above formula, replacing $f$ with $g$, the density of the coordinate-independent version, as defined above, would yield $I_p = p^3 + 3(1-p)^2p - 3(1-p)p^2 - (1-p)^3 = (2p - 1)^3$.\\
\\
For the rest of this section, our goal will be to show that $I_p$ is large, compared to $(2p-1)^3$. That is, the dependency between the coordinates induces an increased probability for triangles and induced edges.
As in \eqref{invere char function}, we may write the Fourier transform of $\mathrm{sgn}(x-t)\mathrm{sgn}(y-t)\mathrm{sgn}(z-t)$ as $\widehat{\mathrm{sgn}}(a,b,c)e^{-2\pi\mathrm{\bf{i}}t(a+b+c)}$. Thus, by \eqref{Fourier isometry}, we have the equality
$$I_p = \frac{1}{\pi^3}\mint{\times}\limits_{\RR^3}\varphi(a,b,c)\widehat{\mathrm{sgn}}(a,b,c)e^{-2\pi \mathrm{\bf{i}}t(a+b+c)}\ dadbdc,$$   
where $\varphi$, as in \eqref{define phi}, is the characteristic function of $f$.
Since $I_p$ represents a real number, we only need to consider the real part of the integral:
\begin{align*}
I_p =&\frac{1}{\pi^3} \mint{\times} \mathrm{Re}\left(\varphi(a,b,c)\widehat{\mathrm{sgn}}(a,b,c)\right)\cos(2\pi t(a+b+c))dadbdc \\
+&\frac{1}{\pi^3}\mint{\times} \mathrm{Im} \left(\varphi(a,b,c)\widehat{\mathrm{sgn}}(a,b,c)\right)\sin(2\pi t(a+b+c))dadbdc\\
=&\frac{1}{\pi^3} \mint{\times}\frac{-\mathrm{Im}\left(\varphi(a,b,c)\right)}{abc}\cos(2\pi t(a+b+c))dadbdc\\
+&\frac{1}{\pi^3}\mint{\times} \frac{ \mathrm{Re}\left(\varphi(a,b,c)\right)}{abc}\sin(2\pi t(a+b+c))dadbdc.
\end{align*}
We denote $$I'_p = \frac{1}{\pi^3}\mint{\times}\frac{\mathrm{Re} \left(\varphi(a,b,c)\right)}{abc}\sin(2\pi t(a+b+c))dadbdc$$ and $$I''_p = \frac{1}{\pi^3}\mint{\times}\frac{-\mathrm{Im}\left(\varphi(a,b,c)\right)}{abc}\cos(2\pi t(a+b+c))dadbdc.$$\\
In the proof of Lemma \ref{lem: lower bound half} we have seen that
$$\mint{\times}\frac{-\mathrm{Im}\left(\varphi(a,b,c)\right)}{abc}dadbdc,$$
is mostly concentrated near the origin. Thus, we should expect that
$$\mint{\times}\frac{-\mathrm{Im}\left(\varphi(a,b,c)\right)}{abc}\cos(2\pi t(a+b+c))dadbdc \simeq \mint{\times}\frac{-\mathrm{Im}\left(\varphi(a,b,c)\right)}{abc}dadbdc.$$
So that $I''_p$ represents the increase in probability.
From Lemma \ref{lem: coordinate free version}, we know that $\mathrm{Re}(\vphi)$ is close to $\psi$, the characteristic function of the coordinate-independent version, which means that $I'_p$ should be close to $(2p-1)^3$. The next two claims will formalize this intuition. We begin by showing that $I''_p$ is large. 
\begin{claim} \label{I''}
	Fix $p \in (0,1)$. There exist constants $\delta'_p,c_p>0$ depending only on $p$ such that whenever $\norm{\alpha}_2^2>c_p$ then $I''_p  \geq 2\delta'_p\left(\frac{\norm{\alpha}_3}{\norm{\alpha}_2}\right)^3$. 
\end{claim}
\begin{proof}
	First, it is not hard to see that the integrand in $I''_p$ is continuous, up to a removable discontinuity, and we may pass to standard integration. Let $R$ be an arbitrary orthogonal transformation which takes $(1,0,0)$ to $\frac{1}{\sqrt{3}}(1,1,1)$. Consider the set
	$$K= R\left(\left[-\frac{1}{\norm{\alpha}^{11/12}_2},\frac{1}{\norm{\alpha}^{11/12}_2}\right]\times\left[-\frac{1}{\norm{\alpha}^{11/12}_2},\frac{1}{\norm{\alpha}^{11/12}_2}\right]\times\left[-\frac{1}{\norm{\alpha}^{11/12}_2},\frac{1}{\norm{\alpha}^{11/12}_2}\right]\right).$$
	Note that if $B_2=\Big\{x \in \RR^3 | \norm{x}^2\leq \frac{1}{\norm{\alpha}_2^{22/12}}\Big\}$ and $B_2'=\Big\{x \in \RR^3 | \norm{x}^2\leq \frac{4}{\norm{\alpha}_2^{22/12}}\Big\}$ then,
	$$B_2 \subset K \subset B_2'.$$
	Now, recall from \eqref{big integral} that, 
	$$\frac{-\mathrm{Im}(\vphi(a,b,c))}{abc} = \frac{\sin\Big(\frac{1}{2}\sum\limits_i\arctan\left(\frac{2\alpha_i^3abc}
		{1+\alpha_i^2\left(a^2+b^2+c^2\right)}\right)\Big)}{abc\prod\limits_i\left(\left(1+\alpha_i^2(a^2+b^2+c^2)\right)^2 +
		4\alpha_i^6a^2b^2c^2\right)^{\frac{1}{4}}}.$$
	From \eqref{norm3 bound} and \eqref{bound-arg}, we have
	$$\norm{\alpha}_3^3 \geq \frac{\sin\left(\frac{1}{2}\sum\limits_i\arctan\left(\frac{2\alpha_i^3abc}{1+\alpha_i^2\left(a^2+b^2+c^2\right)}\right)\right)}{abc} \geq \sum\limits_i\frac{\alpha_i^3}{1+\alpha_i^2(a^2+b^2+c^2)} - 3\norm{\alpha}_3^6|abc|.$$
	Along with the inequality $\frac{\alpha_i^3}{1+\alpha_i^2\left(a^2+b^2+c^2\right)}\geq\alpha_i^3\left(1-\alpha_i^2\left(a^2+b^2+c^2\right)\right)$, the above yields
	$$\left|\frac{\sin\Big(\frac{1}{2}\sum\limits_i\arctan\left(\frac{2\alpha_i^3abc}
		{1+\alpha_i^2\left(a^2+b^2+c^2\right)}\right)\Big)}{abc} - \norm{\alpha}_3^3\right| \leq \norm{\alpha}^5_5(a^2+b^2+c^2) - 3\norm{\alpha}_3^6|abc|.$$
	Therefore
	\begin{align} \label{first approx}
	&\int\limits_{K}\frac{-\mathrm{Im}\left(\varphi(a,b,c)\right)}{abc}\cos(2\pi t(a+b+c))dadbdc \nonumber\\ 
	\geq \norm{\alpha}_3^3&\int\limits_{K}\frac{\cos(2\pi t(a+b+c))dadbdc}{\prod\limits_i\left(\left(1+\alpha_i^2\left(a^2+b^2+c^2\right)\right)^2 +
		4\alpha_i^6a^2b^2c^2\right)^{\frac{1}{4}}} \nonumber \\
	- 3\norm{\alpha}_3^6&\int\limits_{K}|abc|dadbdc -\norm{\alpha}_5^5\int\limits_{K}(a^2+b^2+c^2)dadbdc,
	\end{align}
	with 
	\begin{align*}
	3&\norm{\alpha}_3^6\int\limits_{K}|abc|dadbdc\leq C_1\frac{\norm{\alpha}^6_3}{\norm{\alpha}^{5.5}_2} = C_1\left(\frac{\norm{\alpha}_3}{\norm{\alpha}_2}\right)^3 \frac {\norm{\alpha}_3^3} {\norm{\alpha}_2^2} \frac{1}{\norm{\alpha}_2^{0.5}} \leq C_1\left(\frac{\norm{\alpha}_3}{\norm{\alpha}_2}\right)^3\frac{1}{\norm{\alpha}_2^{0.5}} ,\\
	&\norm{\alpha}_5^5\int\limits_{K}(a^2+b^2+c^2)dadbdc \leq C_1\frac{\norm{\alpha}^5_5}{\norm{\alpha}^{55/12}_2}
	\leq C_1\left(\frac{\norm{\alpha}_3}{\norm{\alpha}_2}\right)^3\frac{1}{\norm{\alpha}_2},
	\end{align*}
	for an absolute constant $C_1>0$. Recalling that $$|\vphi(a,b,c)|=\prod\limits_i\left(\left(1+\alpha_i^2(a^2+b^2+c^2)\right)^2 +
	4\alpha_i^6a^2b^2c^2\right)^{-\frac{1}{4}},$$
	we would like approximate $|\vphi(a,b,c)|$ by $e^{-\frac{\norm{\alpha}_2^2}{2}\left(a^2+b^2+c^2\right)}$. For that, we note that
	$$\left||\vphi(a,b,c)| - e^{\frac{-\norm{\alpha}_2^2}{2}\left(a^2+b^2+c^2\right)} \right| \leq \left|\ln\left(|\vphi(a,b,c)|\right) - \ln\left(e^{\frac{-\norm{\alpha}_2^2}{2}\left(a^2+b^2+c^2\right)}\right) \right|.$$
	Since $\left|\ln(x+1)-x\right|\leq x^2$, similar considerations as in $\eqref{char taylor}$, show for $(a,b,c) \in K$:
	\begin{align*}
	\ln(|\varphi|) =& -\frac{1}{4}\sum \ln \left(1 + 2\alpha_i^2\left(a^2+b^2+c^2\right)+\alpha_i^4\left(a^2+b^2+c^2\right)^2+4\alpha_i^6a^2b^2c^2\right)\nonumber\\
	=& -\frac{\norm{\alpha}_2^2}{2}\left(a^2+b^2+c^2\right) - \frac{\norm{\alpha}_4^4}{4}\left(a^2+b^2+c^2\right)^2 - \norm{\alpha}_6^6a^2b^2c^2\nonumber\\
	+& O\left(\norm{\alpha}_4^4\right)\left(\left(a^2+b^2+c^2\right)^2 + \left(a^2+b^2+c^2\right)^4 + a^4b^4c^4\right)\nonumber\\
	=&-\frac{\norm{\alpha}_2^2}{2}\left(a^2+b^2+c^2\right) + O\left(\norm{\alpha}_4^4\right)\left(a^2+b^2+c^2\right)^2.
	\end{align*} 
	This shows the existence of an absolute constant $C_2 >0$ such that for $(a,b,c) \in K$
	$$ \left||\vphi(a,b,c)| - e^{\frac{-\norm{\alpha}_2^2}{2}\left(a^2+b^2+c^2\right)} \right|\leq C_2\norm{\alpha}_4^4\left(a^2+b^2+c^2\right)^2.$$
	Hence
	\begin{align} \label{second approx}
	&\int\limits_{K}|\vphi(a,b,c)|\cos(2\pi t(a+b+c))dadbdc \nonumber\\
	\geq &\int\limits_{K}e^{-\frac{\norm{\alpha}_2^2}{2}\left(a^2+b^2+c^2\right)}\cos\left(2\pi t(a+b+c)\right)dadbdc - C_2\norm{\alpha}^4_4\int\limits_{K}\left(a^2+b^2+c^2\right)^2dadbdc,
	\end{align}
	and 
	$$C_2\norm{\alpha}^4_4\int\limits_{K}\left(a^2+b^2+c^2\right)^2dadbdc \leq C_3\frac{\norm{\alpha}^4_4}{\norm{\alpha}^{77/12}_2}\leq C_3\left(\frac{\norm{\alpha}_3}{\norm{\alpha}_2}\right)^3\frac{1}{\norm{\alpha}_2^3},$$
	for an absolute constant $C_3>0$.
	By rotational invariance of $e^{-\frac{\norm{\alpha}_2^2}{2}\left(a^2+b^2+c^2\right)}$, we may apply $R$ as a unitary coordinate change, which shows
	\begin{align} \label{coor change}
	&\int\limits_{K}e^{-\frac{\norm{\alpha}_2^2}{2}\left(a^2+b^2+c^2\right)}\cos(2\pi t(a+b+c))dadbdc =\int\limits_{R^{-1}K}e^{-\frac{\norm{\alpha}_2^2}{2}\left(a^2+b^2+c^2\right)}\cos(2\sqrt{3}\pi ta)dadbdc \nonumber\\
	=&\int\limits_{-\frac{1}{\norm{\alpha}^{11/12}_2}}^{\frac{1}{\norm{\alpha}^{11/12}_2}}e^{-\frac{\norm{\alpha}_2^2}{2}c^2}dc\int\limits_{-\frac{1}{\norm{\alpha}^{11/12}_2}}^{\frac{1}{\norm{\alpha}^{11/12}_2}}e^{-\frac{\norm{\alpha}_2^2}{2}b^2}db\int\limits_{-\frac{1}{\norm{\alpha}^{11/12}_2}}^{\frac{1}{\norm{\alpha}^{11/12}_2}}e^{-\frac{\norm{\alpha}_2^2}{2}a^2}\cos(\sqrt{12}\pi t a)da\nonumber \\
	=&\frac{1}{\norm{\alpha}^3_2}\int\limits_{-\norm{\alpha}_2^{1/12}}^{\norm{\alpha}_2^{1/12}}e^{-\frac{c^2}{2}}dc \int\limits_{-\norm{\alpha}_2^{1/12}}^{\norm{\alpha}_2^{1/12}}e^{-\frac{b^2}{2}}db \int\limits_{-\norm{\alpha}_2^{1/12}}^{\norm{\alpha}_2^{1/12}}e^{-\frac{a^2}{2}}\cos\left(\sqrt{12}\pi \frac{t}{\norm{\alpha}_2}a\right)da,
	\end{align}
	where the last equality is a result of a second coordinate change. By Lemma \ref{lem: tp}, we know that  $$|t_p| - \frac{k_p}{\norm{\alpha}_2}\leq \left|\frac{t}{\norm{\alpha}_2}\right| \leq |t_p| + \frac{k_p}{\norm{\alpha}_2}$$ for constants $k_p,t_p$ depending on $p$. Also, a calculation shows that 
	$$\int\limits_{-\infty}^{\infty}e^{-\frac{a^2}{2}}\cos\left(\sqrt{12}\pi \frac{t}{\norm{\alpha}_2}a\right)da=\sqrt{2\pi}e^{-6\pi^2\frac{t^2}{\norm{\alpha}_2^2}}.$$
	Note that if $\norm{\alpha}_2^{2/12} \geq 12\pi^2\left(|t_p| + k_p\right)^2 + 2$, then
	$$\int\limits_{|a| > \norm{\alpha}_2^{1/12}}e^{-\frac{a^2}{2}}da \leq 2 e^{-\frac{\norm{\alpha}_2^{2/12}}{2}}\leq \frac{2}{e} e^{-12\pi^2\frac{\left(|t_p| + k_p\right)^2}{2}} \leq \frac{1}{2}\sqrt{2\pi}e^{-6\pi^2\frac{t^2}{\norm{\alpha}_2^2}}.$$ 
	That is, if $\norm{\alpha}_2^{2/12}$ is larger than some constant, which depends only on $t$, we have
	$$\int\limits_{-\norm{\alpha}_2^{1/12}}^{\norm{\alpha}_2^{1/12}}e^{-\frac{a^2}{2}}\cos\left(\sqrt{12}\pi \frac{t}{\norm{\alpha}_2}a\right)da \geq \frac{1}{2}\sqrt{2\pi}e^{-6\pi^2\frac{t^2}{\norm{\alpha}_2^2}}.$$
	Together with the observation $\int\limits_{-1}^{1}e^{\frac{-x^2}{2}}dx > 1$, this shows that the expression \eqref{coor change} is bounded from below by $\frac{1}{2}\sqrt{2\pi}e^{-6\pi^2\frac{t^2}{\norm{\alpha}_2^2}}$.
	Combining the above, along with \eqref{first approx} and \eqref{second approx} shows 
	\begin{align*}
	&\int\limits_{K}\frac{-\mathrm{Im}\left(\varphi(a,b,c)\right)}{abc}\cos(2\pi t(a+b+c))dadbdc \\
	\geq\norm{\alpha}_3^3&\int\limits_{K}\cos(2\pi t(a+b+c))\left|\vphi(a,b,c)\right|dadbdc
	- 2C_1\left(\frac{\norm{\alpha}_3}{\norm{\alpha}_2}\right)^3\frac{1}{\norm{\alpha}_2^{0.5}}\\
	\geq\norm{\alpha}_3^3&\int\limits_{K}e^{-\frac{\norm{\alpha}_2^2}{2}\left(a^2+b^2+c^2\right)}\cos(2\pi t(a+b+c))dadbdc - C_3\left(\frac{\norm{\alpha}_3}{\norm{\alpha}_2}\right)^6- 2C_1\left(\frac{\norm{\alpha}_3}{\norm{\alpha}_2}\right)^3\frac{1}{\norm{\alpha}_2^{0.5}} \\
	\geq\frac{1}{2}&\sqrt{2\pi}e^{-6\pi^2\frac{t^2}{\norm{\alpha}_2^2}}\left(\frac{\norm{\alpha}_3}{\norm{\alpha}_2}\right)^3 - C_3\left(\frac{\norm{\alpha}_3}{\norm{\alpha}_2}\right)^6- 2C_1\left(\frac{\norm{\alpha}_3}{\norm{\alpha}_2}\right)^3\frac{1}{\norm{\alpha}_2^{0.5}} \geq 4\delta'_p\left(\frac{\norm{\alpha}_3}{\norm{\alpha}_2}\right)^3.
	\end{align*}
	whenever $\norm{\alpha}_2^2>c_p''$, for $c_p'',\delta'_p$ constants, depending only on $p$.
	From \eqref{step 2}, we can choose a constant $c'_p>c''_p>0$ such that
	$$\int\limits_{\RR^3 \setminus B_2}\left| \text{Re}\left(\varphi(a,b,c)\widehat{\mathrm{sgn}}(a,b,c)\right)\right|dadbdc < 2\delta'_p\left(\frac{\norm{\alpha}_3}{\norm{\alpha}_2}\right)^3,$$
	whenever $\norm{\alpha}_2^2 >c'_p$. Thus $$I''_p > \int\limits_{K}\frac{-\text{Im}\left(\varphi(a,b,c)\right)}{abc}dadbdc - \int\limits_{\RR^3 \setminus B_2}\left|\frac{\text{Im}\left(\varphi(a,b,c)\right)}{abc}\right|dadbdc \geq 2\delta'_p\left(\frac{\norm{\alpha}_3}{\norm{\alpha}_2}\right)^3.$$\\
\end{proof}
It now remains to show that the difference between $I'_p$ and $(2p-1)^3$ is small, compared to $I''_p$.
\begin{claim} \label{I'}
		Fix $p \in (0,1)$, there exists a constant $c_p>0$ depending only on $p$ such that whenever $\norm{\alpha}_2^2>c_p$ then $\vert I'_p - (2p-1)^3\vert \leq \delta'_p\left(\frac{\norm{\alpha}_3}{\norm{\alpha}_2}\right)^3$, where $\delta'_p$ is the same as in Claim \ref{I''}. 
\end{claim}
\begin{proof}
	Let $g$ be the density of the coordinate free version of $f$, as in Lemma \ref{lem: coordinate free version}, and let $\psi$ be its characteristic function \eqref{define psi}. Evidently, we have the equality:
	\begin{align*}
	&\frac{1}{\pi^3}\mint{\times}\limits_{\RR^3}\psi(a,b,c)\widehat{\mathrm{sgn}}(a,b,c)e^{-2\pi \mathrm{\bf{i}}t(a+b+c)}dadbdc= (2p-1)^3.
	\end{align*}
	Thus, by rewriting $I'_p$ as
	$$\frac{1}{\pi^3}\mint{\times}\limits_{\RR^3} (\mathrm{Re} \left(\varphi(a,b,c)\right)+\psi(a,b,c) -\psi(a,b,c))\frac{\sin(2\pi t(a+b+c))}{abc}dadbdc,$$
	we obtain
	$$I'_p = (2p-1)^3 + \frac{1}{\pi^3}\mint{\times}\limits_{\RR^3}(\mathrm{Re} \left(\varphi(a,b,c)\right) -\psi(a,b,c))\frac{\sin(2\pi t(a+b+c))}{abc}dadbdc.$$
	Next, we rewrite $\sin(2\pi t(a+b+c))$ as:
	\begin{align*}
	&\sin(2\pi t a)\sin(2\pi t b)\sin(2\pi t c) + \cos(2\pi t a)\cos(2\pi t b)\sin(2\pi t c) +\\
	& \cos(2\pi t a)\sin(2\pi t b)\cos(2\pi t c) + \sin(2\pi t a)\cos(2\pi t b)\cos(2\pi t c).
	\end{align*}
	Recall
	\begin{align*}
	\vphi(a,b,c)&= \prod\limits_{i=1}^{d}(1 + \alpha_i^2(a^2+b^2+c^2) +2\alpha_i^3 abc\mathrm{\bf{i}})^{-\frac{1}{2}},\\
	\psi(a,b,c) &= \prod\limits_i\left((1+\alpha_i^2a^2)(1+\alpha_i^2b^2)(1+\alpha_i^2c^2)\right)^{-\frac{1}{2}}.
	\end{align*}
	One may now verify that $\mathrm{Re}(\varphi(a,b,c)- \psi(a,b,c))\frac{1}{abc}$ is an odd function. For a function $h$, we've defined $\Delta_ch(a,b,c) = h(a,b,c) + h(a,b,-c).$ Thus,
	$$\Delta_c\left(\frac{\mathrm{Re} (\varphi(a,b,c))-\psi(a,b,c)}{abc}\sin(2\pi t a)\cos(2\pi t b)\cos(2\pi t c)\right) = 0.$$ 
	Looking at the principal value, we see that: 
	\begin{align*}
	\mint{\times}\limits_{\RR^3} \frac{\mathrm{Re} (\varphi(a,b,c))-\psi(a,b,c)}{abc}\sin(2\pi t a)\cos(2\pi t b)\cos(2\pi t c)dadbdc = 0,
	\end{align*}
	and the same can be said for the other similar terms. We are then left to consider an integrable function:
	$$I'_p -(2p-1)^3 = \int\limits_{\RR^3} \frac{\sin(2\pi ta)\sin(2\pi tb)\sin(2\pi tc)}{abc}
	(\mathrm{Re}(\vphi(a,b,c)-\psi(a,b,c))dadbdc.$$
	By making the substitution $a' = \norm{\alpha}_2a, b' = \norm{\alpha}_2b, c' = \norm{\alpha}_2c$, and denoting $t' = \frac{t}{\norm{\alpha}_2}$ the above equals
	$$\int\limits_{\RR^3} \frac{\sin(2\pi t'a')\sin(2\pi t'b')\sin(2\pi t'c')}{a'b'c'}
	\left(\mathrm{Re}(\vphi_1(a',b',c')-\psi_1(a',b',c'))\right)da'db'dc',$$
	where $\varphi_1$ and $\psi_1$ are as in Lemma \ref{lem: coordinate free version}.
	By Lemma \ref{lem: tp}, we know that $|t'| < |t_p| + \frac{k_p}{\norm{\alpha}_2}$. Thus
	$$ \sup\limits_{(a',b',c')\in \RR^3}\left|\left(\frac{\sin(2\pi t'a')\sin(2\pi t'b')\sin(2\pi t'c)}{a'b'c'}\right)\right|\leq \left(2\pi\left(|t_p| + \frac{k_p}{\norm{\alpha}_2}\right)\right)^3.$$
	And so
	\begin{equation*}
	|I'_p -(2p-1)^3|\leq \left(2\pi\left(|t_p| + \frac{k_p}{\norm{\alpha}_2}\right)\right)^3\int\limits_{\RR^3}
	\left|\mathrm{Re}(\vphi_1(a',b',c'))-\psi_1(a',b',c')\right|da'db'dc'.
	\end{equation*}
	Lemma \ref{lem: coordinate free version} asserts that $\int\limits_{\RR^3}
	|\mathrm{Re}(\vphi_1)-\psi_1|\leq C\left(\frac{\norm{\alpha}_3}{\norm{\alpha}_2}\right)^{3+\eps}$ for large enough $\norm{\alpha}_2^2$.
	Thus,
	$$I'_p - (2p-1)^3 \leq \left(2\pi\left(|t_p| + \frac{k_p}{\norm{\alpha}_2}\right)\right)^3C\left(\frac{\norm{\alpha}_3}{\norm{\alpha}_2}\right)^{3+\eps}.$$
	Since we've assumed $\alpha$ to be normalized as in \eqref{normalization}, $\frac{\norm{\alpha}_3}{\norm{\alpha}_2}$ can be made as small as needed. The proof concludes by choosing $c_p > c'_p$ to be such that
	\begin{equation*}
	\left(2\pi\left(|t_p| + \frac{k_p}{\norm{\alpha}_2}\right)\right)^3C\left(\frac{\norm{\alpha}_3}{\norm{\alpha}_2}\right)^{3+\eps} < \delta'_p\left(\frac{\norm{\alpha}_3}{\norm{\alpha}_2}\right)^{3} \mbox{ whenever } \norm{\alpha}_2^2>c_p.
	\end{equation*}
\end{proof}

Combining Claims \ref{I'} and \ref{I''}, we have thus established
\begin{lemma} \label{lem: lower bound general}
	Fix $p \in (0,1)$. There exist constants $\delta'_p,c_p>0$ depending only on $p$ such that whenever $\norm{\alpha}_2^2>c_p$ then $I_p  \geq (2p-1)^3 + \delta'_p\left(\frac{\norm{\alpha}_3}{\norm{\alpha}_2}\right)^3$. 
\end{lemma}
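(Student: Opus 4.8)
The goal is to transfer the $p=\tfrac12$ estimate (Lemma \ref{lem: lower bound half}) to arbitrary $p$. The natural plan is to express $I_p$ as a Fourier integral, just as in Lemma \ref{lem: into char function}, but now with the shifted sign functions $\mathrm{sgn}(x-t)\mathrm{sgn}(y-t)\mathrm{sgn}(z-t)$, so that by \eqref{Fourier isometry} and \eqref{shifted sign},
\[
I_p = \frac{1}{\pi^3}\mint{\times}\limits_{\RR^3}\frac{\varphi(a,b,c)e^{-\mathrm{i}t(a+b+c)}}{abc}\,dadbdc,
\]
with $\varphi$ as in \eqref{define phi}. Then I would rescale variables by $\norm{\alpha}_2$, writing $a=a'/\norm{\alpha}_2$ etc., so the integrand becomes $\varphi_1(a',b',c')e^{-\mathrm{i}t'(a'+b'+c')}/(a'b'c')$ where $t' = t/\norm{\alpha}_2 \approx t_p$ by Lemma \ref{lem: tp}. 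The point of the rescaling is that $\varphi_1$ is close to the characteristic function of a product of standard Gaussians (this is the content of the Taylor expansions in Lemma \ref{lem: coordinate free version}), so the leading-order behavior of $I_p$ should match $(2p-1)^3$, and the correction should be governed by $\mathrm{Im}(\varphi_1)$, which carries the $\left(\tfrac{\norm{\alpha}_3}{\norm{\alpha}_2}\right)^3$ factor exactly as in the $p=\tfrac12$ analysis.

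**Key steps.** First, split $\varphi_1 = \re(\varphi_1) + \mathrm{i}\,\mathrm{Im}(\varphi_1)$ and correspondingly $e^{-\mathrm{i}t'(a'+b'+c')} = \cos(t'(a'+b'+c')) - \mathrm{i}\sin(t'(a'+b'+c'))$; collecting the real part of the product gives four terms. Second, replace $\re(\varphi_1)$ by $\psi_1$ using Lemma \ref{lem: coordinate free version}: the total error incurred is at most $C(\norm{\alpha}_3/\norm{\alpha}_2)^{3+\eps}$ times $\sup |\cos/\sin / (a'b'c')|$-type factors — here one must be a little careful since $1/(a'b'c')$ is not bounded, but near the origin the oscillatory factors vanish to matching order, and away from the origin both $|\varphi_1|$ and $|\psi_1|$ decay, so this error stays $o\!\left((\norm{\alpha}_3/\norm{\alpha}_2)^3\right)$. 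Third, the terms involving $\psi_1$ reconstruct (via \eqref{Fourier isometry} run backwards, now for the coordinate-independent density $g$) the quantity $(2p')^3$-type expression; combined with the second half of Lemma \ref{lem: tp} giving $|p-p'|\le 3(\norm{\alpha}_3/\norm{\alpha}_2)^3$, these produce $(2p-1)^3$ up to an error of order $(\norm{\alpha}_3/\norm{\alpha}_2)^3$ (with a constant depending on $p$, which is why $\delta'_p$ depends on $p$). Fourth — the main term — the term $\mint{\times}\int \frac{\mathrm{Im}(\varphi_1)\sin(t'(a'+b'+c'))}{a'b'c'}$ is analyzed exactly as Steps 1–4 of Lemma \ref{lem: lower bound half}: on the small ball $B_2$ the integrand has a definite sign and size $\gtrsim_p (\norm{\alpha}_3/\norm{\alpha}_2)^3$ (the factor $\sin(t'(a'+b'+c'))$ contributes a $p$-dependent but positive-on-average constant, using that $t'$ is bounded away from pathological values), while outside $B_2$ the same decay estimates make the contribution negligible.

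**Main obstacle.** The delicate point is the low-frequency analysis of the $\mathrm{Im}(\varphi_1)\sin(t'(a'+b'+c'))$ term: unlike the $p=\tfrac12$ case where the relevant factor was simply $1$, here we pick up $\sin(t'(a'+b'+c'))$, which changes sign, so one cannot just integrate a nonnegative function over $B_2$. I expect to handle this by expanding $\sin(t'(a'+b'+c')) = t'(a'+b'+c') + O((a'+b'+c')^3)$ on the ball of radius $\sim (\norm{\alpha}_2/\norm{\alpha}_3)$-scale used in Step 1, noting that $\mathrm{Im}(\varphi_1) \approx \tfrac12 \frac{\norm{\alpha}_3^3}{\norm{\alpha}_2^3}a'b'c'$ there, and observing that $\int_{B_1} a'b'c'\cdot(a'+b'+c') = 0$ by parity while the genuinely contributing piece comes from the region where $a'b'c'>0$ paired correctly — in fact the cleaner route is to keep the full $\sin$ and note $\int_{B_1\cap H}\frac{\mathrm{Im}(\varphi_1)}{a'b'c'}\sin(t'(a'+b'+c'))$ need not be positive pointwise but, after symmetrization over sign-changes of coordinates (which fixes $\mathrm{Im}(\varphi_1)/(a'b'c')$ and averages $\sin$), reduces to an integral of $\frac{\mathrm{Im}(\varphi_1)}{a'b'c'}$ against a manifestly positive even kernel of size $\asymp_p 1$ near the origin. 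Getting the sign and a clean $p$-dependent lower bound out of this symmetrization is where the real work lies; everything else is a repetition of the tail estimates already established.
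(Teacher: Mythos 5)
Your overall skeleton matches the paper's: split the real part of $\varphi\,\widehat{\mathrm{sgn}}\,e^{-\mathrm{i}t(a+b+c)}$ into a piece driven by $\mathrm{Im}(\varphi)$ (carrying the $(\norm{\alpha}_3/\norm{\alpha}_2)^3$ boost) and a piece driven by $\mathrm{Re}(\varphi)$, handle the latter by comparison with the independent version $\psi$ via Lemma \ref{lem: coordinate free version} (using the product expansion of $\sin(t(a+b+c))$ and parity to kill all but the $\sin\sin\sin$ term, which makes $1/(abc)$ harmless), and recover $(2p-1)^3$ from the $\psi$ term. But the core of your argument — the term that is supposed to produce $\delta'_p(\norm{\alpha}_3/\norm{\alpha}_2)^3$ — is misidentified, and your mechanism for extracting positivity from it fails. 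Expanding the real part of the integrand gives exactly two terms: $\mathrm{Im}(\varphi)$ pairs with $\cos(t(a+b+c))$ and $\mathrm{Re}(\varphi)$ pairs with $\sin(t(a+b+c))$. The combination you single out as the main term, $\mathrm{Im}(\varphi_1)\sin(t'(a'+b'+c'))/(a'b'c')$, does not occur; moreover it vanishes identically at $t=0$, so it cannot be the term that "reduces to Steps 1--4 of Lemma \ref{lem: lower bound half}" (where the surviving oscillatory factor is $\cos(0)=1$, not $\sin$). Worse, your proposed fix — symmetrize $\sin(t'(a'+b'+c'))$ over coordinate sign changes to get a "manifestly positive even kernel" — produces the zero function: every monomial in the expansion of $\sin(x+y+z)$ contains an odd number of sine factors, hence is odd in some coordinate and averages to $0$. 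So the main term, as you have set it up, contributes nothing.

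Even after correcting the pairing to $\mathrm{Im}(\varphi)\cos(t(a+b+c))/(abc)$, the real difficulty is not the one you describe. Since $t=t_{p,\alpha}\approx t_p\norm{\alpha}_2$ by Lemma \ref{lem: tp}, the phase $t(a+b+c)$ is of order $t_p$ (not small) already on the ball of radius $1/\norm{\alpha}_2$ where $\mathrm{Im}(\varphi)/(abc)\approx \norm{\alpha}_3^3|\varphi|$ is non-negligible, so neither the integrand nor the symmetrized kernel $\cos(ta)\cos(tb)\cos(tc)$ has a definite sign there when $t_p$ is large. The paper's resolution is quantitative rather than sign-based: on a cube $K$ comparable to $B_2$ it replaces $\mathrm{Im}(\varphi)/(abc)$ by $\norm{\alpha}_3^3\,|\varphi|$ and then $|\varphi|$ by the Gaussian $e^{-\norm{\alpha}_2^2(a^2+b^2+c^2)/2}$ (with $L^1(K)$ errors of lower order), and then computes $\int_K e^{-\norm{\alpha}_2^2 r^2/2}\cos(t(a+b+c))$ essentially exactly after rotating $(1,1,1)/\sqrt3$ to a coordinate axis: it is a truncated Fourier transform of a Gaussian, hence bounded below by $\tfrac12\sqrt{2\pi}\,e^{-6\pi^2 t^2/\norm{\alpha}_2^2}\norm{\alpha}_2^{-3}$, a positive $p$-dependent constant times $\norm{\alpha}_2^{-3}$. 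This explicit Gaussian computation is the key positivity input of the whole lemma, and it is missing from your plan; without it (or a substitute) the argument does not close.
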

  Now, by definition $\PP(\langle X_1,X_2 \rangle > t_{p,\alpha})=p$ and $\PP(\langle X_1,X_2 \rangle>t_{p,\alpha},\langle X_1,X_3 \rangle > t_{p,\alpha}) = p^2$. We note that Lemma \ref{lem: lower bound general}, along with \eqref{invere char function} produces:
  $$(2p-1)^3 + \delta'_p\left(\frac{\norm{\alpha}_3}{\norm{\alpha}_2}\right)^3 \leq 8\PP(E_p)-12p^2+6p-1.
  $$
  This proves the lower bound of Theorem \ref{thm: prob estimation}:
  $$p^3 + \frac{\delta'_p}{8}\left(\frac{\norm{\alpha}_3}{\norm{\alpha}_2}\right)^3 \leq \PP(E_p).$$
  \subsection{Upper bound}
  To finish the proof of Theorem \ref{thm: prob estimation} it remains to prove the upper bound. This is done in the following lemma.
  \begin{lemma} \label{lem: upper bound}
  Let $p \in (0,1)$, $\PP(E_p)-p^3 \leq \Delta\left(\frac{\norm{\alpha}_3}{\norm{\alpha}_2}\right)^3$, for a universal constant $\Delta>0$.
  \end{lemma}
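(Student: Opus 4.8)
The plan is to repeat the characteristic-function computation of Lemma~\ref{lem: lower bound general}, now extracting an \emph{upper} bound from each piece. Writing $t := t_{p,\alpha}$, the inversion formula \eqref{invere char function} together with \eqref{Fourier isometry} and the algebraic identity $8p^3 = (2p-1)^3 + 12p^2 - 6p + 1$ gives
\[
8\bigl(\PP(E_p) - p^3\bigr) \;=\; \bigl(I_p - (2p-1)^3\bigr) \;+\; 12\bigl(F_2 - p^2\bigr),
\]
where $I_p = \int_{\RR^3} f\cdot \mathrm{sgn}_{(t,t,t)}$ and $F_2 = \PP(\langle X_1,X_2\rangle>t,\langle X_1,X_3\rangle>t)\ge p^2$ (in the lower bound only $F_2\ge p^2$ was needed, so the last term could be discarded there; for the upper bound it must be controlled). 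I would then split $I_p-(2p-1)^3 = \bigl(I'_p-(2p-1)^3\bigr) + I''_p$ exactly as in that proof, $I'_p$ carrying $\mathrm{Re}(\varphi)$ and $I''_p$ carrying $\mathrm{Im}(\varphi)$, assume $\alpha$ normalized by \eqref{normalization}, and dispose of small $\norm{\alpha}_2$ at once: if $\norm{\alpha}_2^2$ is below the absolute thresholds used below then $\PP(E_p)-p^3\le 1$ while $(\norm{\alpha}_3/\norm{\alpha}_2)^3 = \norm{\alpha}_3^3/\norm{\alpha}_2^3 \ge \norm{\alpha}_2^{-3}$ is bounded below, so the inequality is trivial.

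Two of the three pieces are already in hand. For $I'_p-(2p-1)^3$, the end of the proof of Lemma~\ref{lem: lower bound general} gives $|I'_p-(2p-1)^3| \le \bigl(2\pi(|t_p|+k_p/\norm{\alpha}_2)\bigr)^3\int_{\RR^3}|\mathrm{Re}(\varphi_1)-\psi_1|$, which by Lemma~\ref{lem: coordinate free version} is at most $C(|t_p|+1)^3(\norm{\alpha}_3/\norm{\alpha}_2)^{3+\eps} \le C_p(\norm{\alpha}_3/\norm{\alpha}_2)^3$. For $F_2-p^2$ I would run the identical but two-dimensional version: conditioning on $X_1$ shows the joint characteristic function of $(\langle X_1,X_2\rangle,\langle X_1,X_3\rangle)$ is the real function $\prod_i(1+\alpha_i^2(a^2+b^2))^{-1/2}$, to be compared through the $2$-D inversion formula with $\prod_i(1+\alpha_i^2a^2)^{-1/2}(1+\alpha_i^2b^2)^{-1/2}$ (the characteristic function whose inversion produces $p^2$); the two logarithms agree to second order and differ by $O(\norm{\alpha}_4^4)\,a^2b^2$, so the same Taylor estimates as in Lemma~\ref{lem: coordinate free version} give $0 \le F_2-p^2 \le C(|t_p|+1)^2(\norm{\alpha}_4/\norm{\alpha}_2)^4 \le C_p(\norm{\alpha}_3/\norm{\alpha}_2)^3$, using $\norm{\alpha}_4^4 \le \norm{\alpha}_3^3$ and $\norm{\alpha}_2\ge 1$.

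The real work is the term $I''_p$, which carries $\mathrm{Im}(\varphi)$ and has no analogue in the coordinate-independent model since $\psi$ is real; I would bound it directly from the product formula \eqref{define phi}. Writing $r^2=a^2+b^2+c^2$, two pointwise bounds are immediate:
\[
|\mathrm{Im}(\varphi)| \le |\varphi|\,|\arg \varphi| \le |\varphi|\sum_i \frac{\alpha_i^3|abc|}{1+\alpha_i^2 r^2} \le \norm{\alpha}_3^3\,|abc|\,|\varphi|, \qquad |\varphi| \le \prod_i (1+\alpha_i^2 r^2)^{-1/2}.
\]
The first makes $\mathrm{Im}(\varphi)/(abc)$ dominated by the integrable function $\norm{\alpha}_3^3|\varphi|$, so in $I''_p$ the principal value is an ordinary integral and, since the cosine factor has modulus $\le 1$, $|I''_p| \le \tfrac{1}{\pi^3}\norm{\alpha}_3^3\int_{\RR^3}|\varphi|$. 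Using the second bound and passing to spherical coordinates, $\int_{\RR^3}|\varphi| \le 4\pi\int_0^\infty \tfrac{r^2\,dr}{\prod_i(1+\alpha_i^2 r^2)^{1/2}}$; on $[0,1]$ the inequality $\sum_i\ln(1+\alpha_i^2 r^2)\ge \sum_i\tfrac{\alpha_i^2 r^2}{1+\alpha_i^2 r^2}\ge \tfrac12\norm{\alpha}_2^2 r^2$ (valid since $\alpha_i r\le 1$) bounds the integrand by $r^2 e^{-\norm{\alpha}_2^2 r^2/4}$, giving $C\norm{\alpha}_2^{-3}$, and on $[1,\infty)$ Lemma~\ref{lem: integral bound} with $n=4$, $T=1$ gives $C\norm{\alpha}_2^{-4}$. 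Hence $\int_{\RR^3}|\varphi| \le C\norm{\alpha}_2^{-3}$ and $|I''_p| \le C'(\norm{\alpha}_3/\norm{\alpha}_2)^3$ with an absolute $C'$; summing the three estimates yields $\PP(E_p)-p^3 \le \Delta(\norm{\alpha}_3/\norm{\alpha}_2)^3$ with $\Delta$ depending only on $p$ (through $|t_p|$).

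I expect the $I''_p$ bound to be the only genuinely new step, and its delicate point to be that the trivial estimate $|\varphi|\le 1$ only gives $\int_{\RR^3}|\varphi|\le \mathrm{const}$, whereas it is the Gaussian-type decay $|\varphi|\lesssim e^{-c\norm{\alpha}_2^2 r^2}$ near the origin that produces the factor $\norm{\alpha}_2^{-3}$ and hence the correct order; the inequality $|\mathrm{Im}(\varphi)|\le \norm{\alpha}_3^3|abc|\,|\varphi|$ is what converts this into the power $(\norm{\alpha}_3/\norm{\alpha}_2)^3$. If one wants $\Delta$ genuinely independent of $p$, the $(|t_p|+1)^3$ loss in the $I'_p$ and $F_2$ estimates can be removed: both error integrands vanish quadratically on each coordinate hyperplane, so one factor $\tfrac{\sin(2\pi t a)}{a}$ pairs with a factor $a^2$ and one is left with $\int a\sin(2\pi t a)e^{-a^2/2}\,da = \sqrt{2\pi}\,(2\pi t)e^{-2\pi^2 t^2}$, which is bounded uniformly in $t$; pushing this refinement through the two comparisons gives the absolute constant.
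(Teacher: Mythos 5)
Your proposal is correct in outline, but it takes a genuinely different route from the paper. The paper's proof is a short application of the multivariate Berry--Esseen inequality \eqref{multi-berry-eseen}: the vector $V=(\langle X_1,X_2\rangle,\langle X_1,X_3\rangle,\langle X_2,X_3\rangle)$ has covariance $\norm{\alpha}_2^2\mathrm{I}_3$ and coordinate summands $V_i$ with $\EE\norm{V_i}^3\leq 50\alpha_i^3$, so the Gaussian approximation on the convex octant defining $E_p$ gives $|\PP(E_p)-p'^3|\leq 100C_{\mathrm{be}}\left(\norm{\alpha}_3/\norm{\alpha}_2\right)^3$ directly, and Lemma \ref{lem: tp} converts $p'$ to $p$; this yields a two-sided bound with a universal constant and no largeness assumption on $\norm{\alpha}_2$. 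Your Fourier route instead reuses the machinery of Lemmas \ref{lem: lower bound general} and \ref{lem: coordinate free version}, and it forces you to confront a term the Berry--Esseen argument never sees: $F_2-p^2$. You are right that this term does not vanish --- conditioning on $X_1$ and Jensen give only $F_2\geq p^2$, with strict inequality whenever $t_{p,\alpha}\neq 0$ (the paper's own assertion that $\EE A_{1,2}A_{1,3}=p^2$ is an equality only in the isotropic or $p=\tfrac12$ cases), so your two-dimensional characteristic-function comparison, with error $O(\norm{\alpha}_4^4/\norm{\alpha}_2^4)\leq O((\norm{\alpha}_3/\norm{\alpha}_2)^3)$, is genuinely needed on your route and is correctly set up. The other new ingredient, $|\mathrm{Im}(\varphi)/(abc)|\leq\norm{\alpha}_3^3|\varphi|$ combined with $\int_{\RR^3}|\varphi|\leq C\norm{\alpha}_2^{-3}$ (Gaussian decay near the origin, Lemma \ref{lem: integral bound} in the tail), is correct, and you rightly identify that the naive estimate $|\varphi|\leq 1$ on the unit ball would lose exactly the factor $\norm{\alpha}_2^{-3}$ that makes the bound the right order. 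What your approach buys is self-containedness within the paper's Fourier toolkit, avoiding Bentkus's theorem; what it costs is length, a largeness threshold on $\norm{\alpha}_2^2$ (which you dispose of correctly), and --- absent your final refinement --- a constant depending on $p$ through $|t_p|^3$, which is nonetheless sufficient for every downstream use of the lemma.
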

  \begin{proof}
  The proof of this lemma will use the higher dimensional analogue of the Berry-Esseen's inequality.\\
 \\
  Define the random vector $V = (\langle X_1, X_2 \rangle, \langle X_1, X_3 \rangle, \langle X_2, X_3 \rangle)$. It is straightforward to check that the covariance matrix of $V$ is $\norm{\alpha}^2_2 \mathrm{I}_3$ where $\mathrm{I}_3$ is the identity matrix. We decompose $V$ into $V_i = \left(X_1^iX_2^i, X_1^iX_3^i,X_2^iX_3^i\right)$. Clearly $V = \sum\limits_{i=1}^dV_i$ and, since $X_1^i,X_2^i,X_3^i$ are i.i.d. Gaussians,
  \begin{align*}
  \EE\norm{V_i}^3 \leq& \sqrt{\EE\left[\left((X_1^iX_2^i)^2+(X_1^iX_3^i)^2+(X_2^iX_3^i)^2\right)^3\right]}\\ =& \sqrt{3\EE[(X_1^iX_2^i)^6]+18\EE[(X_1^i)^6(X_2^i)^4(X_3^i)^2] + 6\EE[(X_1^i)^4(X_2^i)^4(X_3^i)^4]} \leq 50\sqrt{\alpha_i^6} = 50\alpha_i^3.
  \end{align*}
  Thus, if $Z_3$ a $3$-dimensional standard Gaussian random vector, by \eqref{multi-berry-eseen} there is a constant $C_{be}$ such that for any convex set $K \subset \RR^3$ we have that
  $$|\PP (V/\norm{\alpha}_2 \in K) - \PP(Z_3 \in K)| \leq  100C_{be}\left(\frac{\norm{\alpha}_3}{\norm{\alpha}_2}\right)^3.$$
  In particular, this holds for the convex set 
  $$E_p = \left\{(x,y,z) \in \RR^3|x >\frac{t_{p,\alpha}}{\norm{\alpha}_2},\ y >\frac{t_{p,\alpha}}{\norm{\alpha}_2},\ z >\frac{t_{p,\alpha}}{\norm{\alpha}_2}\right\}.$$ 
  If we denote $p' = \Phi^{-1}(\frac{{t_{p,\alpha}}}{\norm{\alpha}_2})$ , the above shows 
  $$|\PP(V/\norm{\alpha}_2 \in E_p) - p'^3| \leq  100C_{be}\left(\frac{\norm{\alpha}_3}{\norm{\alpha}_2}\right)^3.$$
 By Lemma \ref{lem: tp}, $|p - p'| \leq 3\left(\frac{\norm{\alpha}_3}{\norm{\alpha}_2}\right)^3$. Also
  $$|p^3-p'^3| = |p-p'|(p^2+pp'+p'^2)\leq 9\left(\frac{\norm{\alpha}_3}{\norm{\alpha}_2}\right)^3.$$
  We then have $$|\PP(E_p)-p^3| \leq |\PP(E_p)-p'^3| + |p^3 - p'^3| \leq (9+100C_{be})\left(\frac{\norm{\alpha}_3}{\norm{\alpha}_2}\right)^3$$ as desired.
  \end{proof}
 \section{Proof of Theorem \ref{thm: signed triangles}}
 Recall from the introduction that $\tau(G)$ denotes the number of signed triangles of a graph $G$. If $A$ is the adjacency matrix of $G$ with entries $A_{i,j}$ we denote the centered adjacency matrix of $G$ as $\bar{A}$ with entries $\bar{A}_{i,j} := A_{i,j} - \EE[A_{i,j}]$. Given three distinct vertices $i$,$j$ and $k$, the signed triangle induced by those 3 vertices is $\tau_G(i,j,k) := \bar{A}_{i,j}\bar{A}_{i,k}\bar{A}_{j,k}$. It then holds that for a graph $G=(V,E)$ the number of signed triangles is given by:
 $$\tau(G) := \sum\limits_{\{i,j,k\}\in\binom{V}{3}}\tau_G(i,j,k).$$
Analysis of $\tau(G(n,p))$ was done in ~\cite{BDER14}, where it was shown that $\EE \tau(G(n,p)) = 0$ while $\mathrm{Var}(\tau(G(n,p))) \leq n^3$.\\
\\
To prove Theorem \ref{thm: signed triangles} it will suffice to show that $\EE \tau(G(n,p,\alpha))$ is asymptotically bigger than both the standard deviation of $\tau(G(n,p))$ and of $\tau(G(n,p,\alpha))$, provided that $\left(\frac{\norm{\alpha}_2}{\norm{\alpha}_3}\right)^6 << n^3$.\\
\\
For this aim we first prove some technical lemmas:
\begin{lemma} \label {lem: wedge corrleation bound}
	Let $p \in (0,1)$, then 
	$$\EE A_{1,2}A_{2,3} \leq p^2 + 8\left(\frac{\norm{\alpha}_4}{\norm{\alpha}_2}\right)^4.$$
\end{lemma}
\begin{proof}
	Let $X,Y,Z$ be i.i.d. random variables generated from $\mathcal{N}(0,D_\alpha)$, then, conditioning on $Y$ yields the expression
	$$\EE A_{1,2}A_{2,3} = \EE\left[\PP\left(\langle X, Y\rangle \geq t_{p,\alpha}\right)\PP\left(\langle Z, Y\rangle \geq t_{p,\alpha}\right)|Y\right] = \EE\left[\Phi\left(\frac{t_{p,\alpha}}{\sqrt{\sum \alpha_iY_i^2}}\right)^2\right],$$
	where $\Phi$ is the standard Gaussian cumulative distribution function. By the same argument, we also have
	$$\EE\left[\Phi\left(\frac{t_{p,\alpha}}{\sqrt{\sum \alpha_iY_i^2}}\right)\right]^2 = \EE A_{1,2}\EE A_{2,3} = p^2.$$
	Thus, it will be enough to show,
	\begin{equation} \label{eq: CDF bound}
	\mathrm{Var}\left(\Phi\left(\frac{t_{p,\alpha}}{\sqrt{\sum \alpha_iY_i^2}}\right)\right) \leq 8\left(\frac{\norm{\alpha}_4}{\norm{\alpha}_2}\right)^4.
	\end{equation}
	For $\sigma^2 > 0$ denote by $G_{\sigma^2}$ a random variable with law $\mathcal{N}(0,\sigma^2)$. We then have
	$$\Phi\left(\frac{t_{p,\alpha}}{\sqrt{\sum \alpha_iY_i^2}}\right) = \PP\left(G_{\sum\alpha_iY_i^2} \leq t_{p,\alpha}\right),$$
	and
\begin{align*}
	\mathrm{Var}\left(\Phi\left(\frac{t_{p,\alpha}}{\sqrt{\sum \alpha_iY_i^2}}\right)\right) &\leq \EE\left[\left(\Phi\left(\frac{t_{p,\alpha}}{\sqrt{\sum \alpha_iY_i^2}}\right) - \Phi\left(\frac{t_{p,\alpha}}{\norm{\alpha}_2}\right)\right)^2\right] \\
	&=\EE\left[\left(\PP\left(G_{\sum\alpha_iY_i^2} \leq t_{p,\alpha}\right) -\PP\left(G_{\norm{\alpha}_2^2} \leq t_{p,\alpha}\right)\right)^2\right] \\
	&\leq \EE\left[\mathrm{TV}\left(G_{\sum\alpha_iY_i^2},G_{\norm{\alpha}_2^2}\right)^2\right].
\end{align*}
For the total variation distance between $2$ Gaussian random variables we have the following bound (see Proposition 3.6.1 in \cite{nourdin2012normal}, for example):
$$\mathrm{TV}\left(G_{\sigma^2_1},G_{\sigma^2_1}\right) \leq 2\frac{|\sigma^2_1-\sigma^2_2|}{\max(\sigma_1^2,\sigma_2^2)}.$$
This implies
$$\mathrm{Var}\left(\Phi\left(\frac{t_{p,\alpha}}{\sqrt{\sum \alpha_iY_i^2}}\right)\right) \leq \frac{4}{\norm{\alpha}_2^4}\EE\left[\left(\sum\alpha_iY_i^2 - \norm{\alpha}_2^2\right)^2\right].$$
As $Y\sim \mathcal{N}(0,D_\alpha)$, it is immediate to check
$$\EE\left[\sum\alpha_iY_i^2\right] = \sum\alpha_i^2 = \norm{\alpha}_2^2.$$
Hence
$$\EE\left[\left(\sum\alpha_iY_i^2 - \norm{\alpha}_2^2\right)^2\right] = \sum\alpha_i^2\mathrm{Var}\left(Y_i^2\right) = 2\norm{\alpha}_4^4.$$
This establishes \eqref{eq: CDF bound} and finishes the proof.

\end{proof}
\begin{lemma} \label {lem: triangle corrleation bound}
Let $p \in (0,1)$, then 
$$\EE[\tau_{G(n,p,\alpha)}(1,2,3)\tau_{G(n,p,\alpha)}(1,2,4)] \leq 80\left(\frac{\norm{\alpha}_4}{\norm{\alpha}_2}\right)^4$$
\end{lemma}
\begin{proof}
	The proof is similar to Lemma \ref{lem: wedge corrleation bound} and uses the observation that if $V_1$ and $V_2$ are the random vectors corresponding to two vertices, then conditioned on their values, the random variables $\tau_{G(n,p,\alpha)}(1,2,3)$ and $\tau_{G(n,p,\alpha)}(1,2,4)$ are independent. Thus
	\begin{align*} 
	\EE[\tau_{G(n,p,\alpha)}(1,2,3)\tau_{G(n,p,\alpha)}(1,2,4)] &= \EE\left[\EE[\tau_{G(n,p,\alpha)}(1,2,3)\tau_{G(n,p,\alpha)}(1,2,4)|V_1,V_2]\right] \\
	&=\EE\left[\EE\left[\bar{A}_{1,3}\bar{A}_{2,3}|V_1,V_2\right]^2\bar{A}^2_{1,2}\right] \leq \EE\left[\EE\left[\bar{A}_{1,3}\bar{A}_{2,3}|V_1,V_2\right]^2\right]. 
	\end{align*}
	Lemma \ref{lem: wedge corrleation bound} implies $\EE\left[\bar{A}_{1,3}\bar{A}_{2,3}\right] \leq 8\left(\frac{\norm{\alpha}_4}{\norm{\alpha}_2}\right)^4$,  so that
	\begin{align}\label{eq: triangle corr core}
	\EE[\tau_{G(n,p,\alpha)}(1,2,3)\tau_{G(n,p,\alpha)}(1,2,4)] &\leq\EE\left[\EE\left[\bar{A}_{1,3}\bar{A}_{2,3}|V_1,V_2\right]^2\right]\nonumber \\
	&\leq \mathrm{Var}\left(\EE\left[\bar{A}_{1,3}\bar{A}_{2,3}|V_1,V_2\right]\right) + 64\left(\frac{\norm{\alpha}_4}{\norm{\alpha}_2}\right)^{8}.
	\end{align}
	Note that if $X \sim \mathcal{N}(0,D_\alpha)$, 
	\begin{align*} 
	&\EE\left[\bar{A}_{1,3}\bar{A}_{2,3}|V_1,V_2\right] = \\
	&(1-p)^2\mathbb{P}\left(\langle X,V_1 \rangle \geq t_{p,\alpha},\langle X,V_2 \rangle \geq t_{p,\alpha}\right) + p^2\mathbb{P}\left(\langle X,V_1 \rangle < t_{p,\alpha},\langle X,V_2 \rangle < t_{p,\alpha}\right)\\
	-&p(1-p)\left(\mathbb{P}\left(\langle X,V_1 \rangle \geq t_{p,\alpha},\langle X,V_2 \rangle < t_{p,\alpha}\right) + \mathbb{P}\left(\langle X,V_1 \rangle < t_{p,\alpha},\langle X,V_2 \rangle \geq t_{p,\alpha}\right)\right).
	\end{align*}
	For any $v,u \in \RR^d$, denote by $\Sigma_{v,u}$ the matrix given by
	$$\left[ {\begin{array}{cc}
		\sum\alpha_iv_i^2 & \sum\alpha_iv_iu_i \\
		\sum\alpha_iv_iu_i & \sum\alpha_iu_i^2 \\
		\end{array} } \right], $$
	 then the joint law of $ \langle X,v \rangle,\langle X,u \rangle$ is $G_{v,u} := \mathcal{N}\left(0,\Sigma_{v,u}\right).$
	 The above can now be rewritten as
	\begin{align} \label{eq: subsets representation}
	&(1-p)^2\mathbb{P}\left(G_{V_1,V_2} \in (t_{p,\alpha}, \infty) \times (t_{p,\alpha}, \infty)\right) + p^2\mathbb{P}\left(G_{V_1,V_2} \in (-\infty, t_{p,\alpha}) \times (-\infty, t_{p,\alpha})\right) \nonumber\\
	-&p(1-p)\left(\left(G_{V_1,V_2} \in (-\infty,t_{p,\alpha}) \times (t_{p,\alpha}, \infty)\right) + \mathbb{P}\left(G_{V_1,V_2} \in (t_{p,\alpha}, \infty) \times (-\infty, t_{p,\alpha})\right)\right).
	\end{align}
	In particular, if $M_i$ are independent random Wishart matrices with law $\mathcal{W}_2(\alpha^2_i\mathrm{I}_2,1)$ and $M = \sum M_i$ then the matrix $\Sigma_{V_1,V_2}$ has the same law as $M$. In this case we regard \eqref{eq: subsets representation} as a function $h$, of the covariance $M$.  
	Using \eqref{eq: triangle corr core}, we get
	$$\EE[\tau_{G(n,p,\alpha)}(1,2,3)\tau_{G(n,p,\alpha)}(1,2,4)] \leq \mathrm{Var}\left(h(M)\right) + 16\left(\frac{\norm{\alpha}_4}{\norm{\alpha}_2}\right)^{8}.$$
	It is thus enough to establish an upper bound for $\mathrm{Var}(h(M))$. 
	For a positive semi-definite matrix $\Sigma$, we denote  $G_\Sigma \sim \mathcal{N}(0,\Sigma)$. As $h(M) \leq 1$, we have the following inequality
	\begin{align*}
	\mathrm{Var}(h(M)) &\leq \EE\left[\left(h(M) - h(\norm{\alpha}_2^2\mathrm{I}_2)^2\right)\right]\leq \EE\left[\mathrm{TV}\left(G_M, G_{\norm{\alpha}_2^2\mathrm{I}}\right)^2\right] \\
	&\leq \EE\left[\min\left(1, \mathrm{Ent}\left(G_M||G_{\norm{\alpha}_2^2\mathrm{I}_2}\right)\right)\right],
	\end{align*}
	where we have used Pinsker's inequality \eqref{pinsker} to bound the total variation.
	The relative entropy between the Gaussians (see \cite{duchi2007derivations}) is given by
	\begin{align*}
	\mathrm{Ent}\left(G_M||G_{\norm{\alpha}_2^2\mathrm{I}_2}\right) &=  \mathrm{Tr}\left({\frac{1}{\norm{\alpha}_2^2}M}\right) - \ln\left(\frac{\det(M)}{\det(G_{\norm{\alpha}_2^2\mathrm{I}_2})}\right) - 2 \\
	&= \mathrm{Tr}\left(\frac{M}{\norm{\alpha}_2^2}-\mathrm{I}_2\right) - \ln\left(\det\left(\frac{M}{\norm{\alpha}_2^2}\right)\right).
	\end{align*}
	For any $x \geq \frac{1}{2}$ we have the inequality
	$ x-1 - \ln(x) \leq x^2$. So, if both eigenvalues of $\frac{M}{\norm{\alpha}_2^2}$ are bigger than $\frac{1}{2}$
	$$\mathrm{Ent}\left(G_M||G_{\norm{\alpha_2^2}\mathrm{I}_2}\right) \leq \norm{\frac{M}{\norm{\alpha}_2^2} - \mathrm{I}_2}_{HS}^2.$$
	Otherwise, it is clear that
	$$1 \leq 2\norm{\frac{M}{\norm{\alpha}_2^2} - \mathrm{I}_2}_{HS}^2.$$
	Combining the above, we have established
	\begin{equation} \label{eq: var to HS}
	\mathrm{Var}\left(h(M)\right) \leq 2\EE\left[\norm{\frac{M}{\norm{\alpha}_2^2} - \mathrm{I}_2}_{HS}^2\right].
	\end{equation}
	Recall that the diagonal elements of $M$ are given by $\sum\alpha_i(V)_i^2$ where $V \sim \mathcal{N}(0,D_\alpha)$, thus
	$$\EE \left[M_{1,1}\right] =  \EE\left[\left(\sum\alpha_i(V_1)_i^2\right)\right] = \left(\sum \alpha_i^2\right)= \norm{\alpha}_2^2,$$
	and
	$$\mathrm{Var}(M_{1,1}) = \sum\mathrm{Var}\left(\alpha_i(V_1)_i^2\right)  = 2\norm{\alpha}_4^4.$$
	The off-diagonal element is given by $\sum\alpha_i\left(V_1\right)_i\left(V_2\right)_i$, for which we have
	$$\EE\left[M_{1,2}\right] = 0,$$
	and
	$$\mathrm{Var}\left(M_{1,2}\right) = \EE\left[\left(\sum\alpha_i\left(V_1\right)_i\left(V_2\right)_i\right)^2\right] = \sum\alpha_i^2\EE\left[\left(V_1\right)^2_i\right]\EE\left[\left(V_2\right)^2_i\right] = \norm{\alpha}_4^4.$$
	Using these estimates in \eqref{eq: var to HS} we obtain
	$$\mathrm{Var}\left(h(M)\right) \leq 16\left(\frac{\norm{\alpha}_4}{\norm{\alpha}_2}\right)^4.$$
	plugging this into \eqref{eq: triangle corr core} gives the desired result.
\end{proof}
\begin{lemma} \label{lem: star correlation bound}
	Let $p \in (0,1)$, then 
	$$\EE[\tau_{G(n,p,\alpha)}(1,2,3)\tau_{G(n,p,\alpha)}(1,4,5)] \leq 80\left(\frac{\norm{\alpha}_4}{\norm{\alpha}_2}\right)^4$$
\end{lemma}
\begin{proof}
	Conditioned on the location of the vertex $V_1$, the random variables $\tau_{G(n,p,\alpha)}(1,2,3),\tau_{G(n,p,\alpha)}(1,4,5)$ are independent and identically distributed, thus
	\begin{align*}
	\EE[\tau_{G(n,p,\alpha)}(1,2,3)\tau_{G(n,p,\alpha)}(1,4,5)] &= \EE\left[\EE\left[\tau_{G(n,p,\alpha)}(1,2,3)\tau_{G(n,p,\alpha)}(1,4,5)|V_1\right]\right]\\
	&= \EE\left[\EE\left[\tau_{G(n,p,\alpha)}(1,2,3)|V_1\right]^2\right].
	\end{align*}
	Using the tower property of conditional expectation
	\begin{align*}
	\EE\left[\EE\left[\tau_{G(n,p,\alpha)}(1,2,3)|V_1\right]^2\right] &=\EE\left[\EE\left[\EE\left[\tau_{G(n,p,\alpha)}(1,2,3)|V_1,V_2\right]|V_1\right]^2\right] \\
	&\leq \EE\left[\EE\left[\tau_{G(n,p,\alpha)}(1,2,3)|V_1,V_2\right]^2\right] = \EE\left[\EE\left[\tau_{G(n,p,\alpha)}(1,2,3)|V_1,V_2\right]^2\right].
	\end{align*}
	But in Lemma \ref{lem: triangle corrleation bound}, using \eqref{eq: triangle corr core}, we have essentially shown
	$$\EE\left[\EE\left[\tau_{G(n,p,\alpha)}(1,2,3)|V_1,V_2\right]^2\right] \leq 80\left(\frac{\norm{\alpha}_4}{\norm{\alpha}_2}\right)^4,$$
	thus the claim is proven.
\end{proof}
Towards the proof of Theorem \ref{thm: main} we now estimate $\EE\tau(G(n,p,\alpha))$. Note that since 
$$\EE\tau(G(n,p,\alpha)) = \binom{n}{3}\EE\tau_{G(n,p,\alpha)}(1,2,3),$$
 it is enough to estimate $\EE\tau_{G(n,p,\alpha)}(1,2,3)$.
\begin{align} \label{expectation}
\nonumber \EE\tau_{G(n,p,\alpha)}(1,2,3) &= \EE\bar{A}_{1,2}\bar{A}_{1,3}\bar{A}_{2,3} = \EE (A_{1,2}-p)(A_{1,3}-p)(A_{2,3}-p)\\ \nonumber
&= \EE A_{1,2}A_{1,3}A_{2,3} - p\left(\EE A_{1,2}A_{2,3}+\EE A_{1,2}A_{1,3}+\EE A_{1,3}A_{2,3}\right)\\\nonumber
&\ \ \ \ \ \ \ \ +p^2\left(\EE A_{1,2}+\EE A_{1,3}+\EE A_{2,3}\right) - p^3\\
&\geq\EE A_{1,2}A_{1,3}A_{2,3}-p^3  - 24p\left(\frac{\norm{\alpha}_4}{\norm{\alpha}_2}\right)^4,
\end{align}
where the inequality follows from the fact that $\EE A_{i,j} = p$ and Lemma \ref{lem: wedge corrleation bound}. As
$$\left(\frac{\norm{\alpha}_4}{\norm{\alpha}_2}\right)^4 \leq \left(\frac{\norm{\alpha}_3}{\norm{\alpha}_2}\right)^3 \frac{1}{\norm{\alpha}_2}, $$
as long as $\norm{\alpha}_2$ is large enough, the lower bound of Theorem \ref{thm: prob estimation} yields
$$\EE\tau_{G(n,p,\alpha)}(1,2,3) \geq \delta_p \left(\frac{\norm{\alpha}_3}{\norm{\alpha}_2}\right)^3$$
for a constant $\delta_p >0$, depending only on $p$. This shows
$$\EE\tau(G(n,p,\alpha)) \geq \delta_p\binom{n}{3}\left(\frac{\norm{\alpha}_3}{\norm{\alpha}_2}\right)^3.$$
To bound from above the variance of $\tau(G(n,p,\alpha))$ we observe that $\tau_G(i,j,k)$ is independent from $\tau_G(i',j',k')$ whenever $\left|\left\{i,j,k\right\} \cap \left\{i',j',k'\right\}\right| = 0$, thus
\begin{align*}
&\mathrm{Var}\left(\tau(G(n,p,\alpha))\right)\\
&=\sum\limits_{\left\{i,j,k\right\}}\sum\limits_{\left\{i',j',k'\right\}}\EE\left[\tau_{G(n,p,\alpha)}(i,j,k)\tau_{G(n,p,\alpha)}(i',j',k')\right] - \EE\left[\tau_{G(n,p,\alpha)}(i,j,k)\right]\EE\left[\tau_{G(n,p,\alpha)}(i',j',k')\right]\\ 
&\leq \sum\limits_{\{i,j,k\}}\EE\left[\tau_{G(n,p,\alpha)}(i,j,k)\tau_{G(n,p,\alpha)}(i,j,k)\right] + \sum\limits_{\{i,j,k, l\}}\EE\left[\tau_{G(n,p,\alpha)}(i,j,k)\tau_{G(n,p,\alpha)}(i,j,l)\right] \\
&\ \ \ \ \ \ \ \ \ \ \ \ \ \ \ \ \ \ \ \ \ \ \ \ \ \ \ \ \ \ \ \ \ \ \ \ \ \ \ \ \ \ \ \ \ \ \ \ \ \ \ \ \ \ \ \ \ \ \ \ \ \ \ \  \ \ \ \ \ \  + \sum\limits_{\{i,j,k, l,m\}}\EE\left[\tau_{G(n,p,\alpha)}(i,j,k)\tau_{G(n,p,\alpha)}(k,l,m)\right]\\
&= \binom{n}{3}\EE[\tau_{G(n,p,\alpha)}(1,2,3)\tau_{G(n,p,\alpha)}(1,2,3)] + \binom{n}{4}\binom{4}{2}\EE[\tau_{G(n,p,\alpha)}(1,2,3)\tau_{G(n,p,\alpha)}(1,2,4)]\\
&\ \ \ \ \ \ \ \ \ \ \ \ \ \ \ \ \ \ \ \ \ \ \ \ \ \ \ \ \ \ \ \ \ \ \ \ \ \ \ \ \ \ \ \ \ \ \ \ \ \ \ \ \ \ \ \ \ \ \ \ \ \ \ \  \ \ \ \ \   +5\binom{n}{5}\EE[\tau_{G(n,p,\alpha)}(1,2,3)\tau_{G(n,p,\alpha)}(1,4,5)].
\end{align*}
Noting that $\EE[\tau_{G(n,p,\alpha)}(1,2,3)\tau_{G(n,p,\alpha)}(1,2,3)] \leq 1$, in conjunction with Lemmas \ref{lem: triangle corrleation bound} and \ref{lem: star correlation bound} yields
$$\mathrm{Var}(\tau(G(n,p,\alpha)))  \leq n^3 + 80n^5\left(\frac{\norm{\alpha}_4}{\norm{\alpha}_2}\right)^4.$$
Combining all of the above
$$\EE\left[\tau(G(n,p))\right]=0,\ \ \EE[\tau(G(n,p,\alpha))] \geq \delta_p\binom{n}{3}\left(\frac{\norm{\alpha}_3}{\norm{\alpha}_2}\right)^3,$$
and
$$\max\{\mathrm{Var}(\tau(G(n,p,\alpha))),\mathrm{Var}(G(n,p))\} \leq n^3 + 80n^5\left(\frac{\norm{\alpha}_4}{\norm{\alpha}_2}\right)^4.$$
Chebyshev's inequality implies that
$$\PP\left(\tau\left(G(n,p,\alpha)\right) \leq \frac{1}{2}\EE[\tau(G(n,p,\alpha))]\right) \leq 200 \frac{\left(\frac{\norm{\alpha}_2}{\norm{\alpha}_3}\right)^6n^3+80n^5\frac{\norm{\alpha}_2^2\norm{\alpha}_4^4}{\norm{\alpha}_3^6}}{\delta_p^2n^6},$$
and also
$$\PP\left(\tau(G(n,p)) \geq \frac{1}{2}\EE[\tau(G(n,p,\alpha))]\right) \leq 200 \frac{\left(\frac{\norm{\alpha}_2}{\norm{\alpha}_3}\right)^6n^3+80n^5\frac{\norm{\alpha}_2^2\norm{\alpha}_4^4}{\norm{\alpha}_3^6}}{\delta_p^2n^6}.$$
Note that due to the normalization \eqref{normalization} $\frac{\norm{\alpha}_2^2\norm{\alpha}_4^4}{\norm{\alpha}_3^6} \leq \frac{\norm{\alpha}_2^2}{\norm{\alpha}_3^2}$. Putting the above expressions together we thus have:
$$\mathrm{TV}\left(\tau(G(n,p,\alpha)),\tau(G(n,p))\right) \geq 1 - C\frac{\left(\frac{\norm{\alpha}_2}{\norm{\alpha}_3}\right)^6}{n^3} - C\frac{\left(\frac{\norm{\alpha}_2}{\norm{\alpha}_3}\right)^2}{n},$$
for a constant $C$ depending only on $p$. This concludes the proof of Theorem \ref{thm: signed triangles}.

\section{Proof of the lower bound}
As stated in the introduction, we can view $G(n,p,\alpha)$ as a function of an appropriate random matrix, as follows. Let $\mathbb{Y}$ be a random $n\times d$ matrix with rows sampled i.i.d. from $\mathcal{N}(0,D_\alpha)$. Define $W = W(n,\alpha)= \mathbb{Y}\mathbb{Y}^T/\norm{\alpha}_2 -\mathrm{diag}\left(\mathbb{Y}\mathbb{Y}^T/\norm{\alpha}_2\right)$. Note that for $i \neq j$, $W_{ij} = \langle \gamma_i, \gamma_j\rangle/\norm{\alpha}_2$, where $\gamma_i,\gamma_j$ are the rows of $\mathbb{Y}$. Thus the $n\times n$ matrix $A$ defined as
\begin{equation*}
A_{i,j} = \begin{cases}
1 &\text{if $W_{ij}\geq t_{p,\alpha}/\norm{\alpha}_2$ and $i \neq j$}\\
0 &\text{otherwise}
\end{cases}
\end{equation*}
has the same law as the adjacency matrix of $G(n,p,\alpha)$. Denote the map that takes $W$ to $A$ by $H_{p,\alpha}$, i.e., $A = H_{p.\alpha}(W)$.\\
\\
Similarly, we may view $G(n,p)$ as function of an $n\times n$ matrix with independent Gaussian entries. Let $M(n)$ be a symmetric $n\times n$ random matrix with $0$ entries in the diagonal, and whose entries above the diagonal are i.i.d. standard normal random variables. If $\Phi$ is the cumulative distribution function of the standard Gaussian, then the $n\times n$ matrix $B$, defined as
\begin{equation*}
B_{i,j} = \begin{cases}
1 &\text{if $M(n)_{ij}\geq \Phi^{-1}(p)$ and $i \neq j$}\\
0 &\text{otherwise}
\end{cases}
\end{equation*}
has the same law as the adjacency matrix of $G(n,p)$. Denote the map that takes $M(n)$ to $B$ by $K_p$, i.e., $B = K_p(M(n))$.\\
\\
Using the triangle inequality and by the previous two paragraphs, we have that for any $p \in (0,1)$
\begin{align*}
\mathrm{TV}(G(n,p), G&(n,p,\alpha)) = \mathrm{TV}(K_p(M(n)), H_{p,\alpha}(W(n,\alpha)))\\
& \leq \mathrm{TV}(H_{p,\alpha}(M(n)),H_{p,\alpha}(W(n,\alpha))) + \mathrm{TV}(K_p(M(n)), H_{p,\alpha}(M(n)))\\
& \leq \mathrm{TV}(M(n),W(n,\alpha)) + \mathrm{TV}(K_p(M(n)), H_{p,\alpha}(M(n))).
\end{align*}
The second term is of lower order and will be dealt with later. The first term is bounded using Pinsker's inequality , \eqref{pinsker}, yielding
$$ \mathrm{TV}( M(n), W(n,\alpha)) \leq \sqrt{\frac{1}{2}\mathrm{Ent}[M(n)\big|\big|W(n,\alpha)]}.$$
We'll use a similar argument to the one presented in ~\cite{BG15} which follows an inductive proof using the chain rule for relative entropy.
We observe that a sample of $W(n+1,\alpha)$ may be constructed from $W(n,\alpha)$ by adjoining the column vector (and symmetrically the row vector) $\mathbb{Y}Y/\norm{\alpha}_2$ where $Y \sim \mathcal{N}(0,D_\alpha)$ is independent of $\mathbb{Y}$. Thus, using the notation, $Z_n$ for a standard Gaussian in $\RR^n$, by \eqref{chain rule}, we obtain
$$\mathrm{Ent}\left[W(n+1,\alpha)\big|\big|M(n +1)\right] = \mathrm{Ent}\left[W(n,\alpha)\big|\big|M(n)\right] + \EE_{\mathbb{Y}} \mathrm{Ent}\left[\mathbb{Y}Y/\norm{\alpha}_2\big|W(n,\alpha)\big|\big|Z_n\right].$$
Since $W(n,\alpha)$ is a function of $\mathbb{Y}$, standard properties of relative entropy (see ~\cite{cover2012elements}, chapter 2) show
\begin{align*}
&\EE_{\mathbb{Y}} \mathrm{Ent}\left[\mathbb{Y}Y/\norm{\alpha}_2\big|W(n,\alpha)\big|\big|Z_n\right]\\
=&\EE_{\mathbb{Y}} \mathrm{Ent}\left[\mathbb{Y}Y/\norm{\alpha}_2\big|\mathbb{Y}\mathbb{Y}^T/\norm{\alpha}_2\big|\big|Z_n\right] \leq \EE_{\mathbb{Y}} \mathrm{Ent}\left[\mathbb{Y}Y/\norm{\alpha}_2\big|\mathbb{Y}\big|\big|Z_n\right].
\end{align*}
Note that $\mathbb{Y}Y/\norm{\alpha}_2|\mathbb{Y}$ is distributed as $\mathcal{N}(0,\frac{1}{\norm{\alpha}_2^2}\mathbb{Y}D_\alpha\mathbb{Y}^T)$. The relative entropy between two $n$-dimensional Gaussians, (see ~\cite{duchi2007derivations}) $\mathcal{N}_1 \sim \mathcal{N}(0,\Sigma_1),\mathcal{N}_2 \sim \mathcal{N}(0,\Sigma_2)$ is given by
$$\mathrm{Ent}\left[\mathcal{N}_1 || \mathcal{N}_2\right] = \frac{1}{2}\left(\mathrm{tr}\left(\Sigma_2^{-1}\Sigma_1\right) +\ln\left(\frac{\det\Sigma_2}{\det\Sigma_1}\right)-n\right).$$
In our case $\Sigma_2 = \mathrm{I}_n$ and $\EE_{\mathbb{Y}}\  \mathrm{tr}(\mathbb{Y}D_\alpha\mathbb{Y}^T) = n\norm{\alpha}_2^2$. Thus the following holds:
$$\EE_\mathbb{Y}\ \mathrm{Ent}\left[\frac{1}{\norm{\alpha}_2}\mathbb{Y}Y\big|\mathbb{Y}\big|\big|Z_n\right] = -\frac{1}{2}\left(\EE_\mathbb{Y}\ \ln\det\left(\frac{1}{\norm{\alpha}_2^2}\mathbb{Y}D_\alpha\mathbb{Y}^T\right)\right).$$
Theorem \ref{thm: entropy} is then implied by the following lemma:
\begin{lemma} \label{lem: ln bound}
$-\EE_\mathbb{Y}\ \ln\det\left(\frac{1}{\norm{\alpha}_2^2}\mathbb{Y}D_\alpha\mathbb{Y}^T\right) \leq C\left(n^2 \left(\frac{\norm{\alpha}_4}{\norm{\alpha}_2}\right)^4 + \sqrt{n\left(\frac{\norm{\alpha}_4}{\norm{\alpha}_2}\right)^4 }\right)$ for a universal constant $C>0$.
\end{lemma}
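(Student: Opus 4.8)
The plan is to set $M := \tfrac{1}{\norm{\alpha}_2^2}\mathbb{Y}D_\alpha\mathbb{Y}^T$ (an $n\times n$ matrix, invertible a.s. since we are in the regime $d\ge n$, which is needed even for the left-hand side to be finite) and to exploit that $M$ is a small centered perturbation of the identity. Since rescaling $\alpha$ by a constant changes neither $M$ nor $\norm{\alpha}_4/\norm{\alpha}_2$, we may assume $\norm{\alpha}_\infty=1$. A direct moment computation gives $\EE M=\mathrm{I}_n$ (because $\EE[\mathbb{Y}D_\alpha\mathbb{Y}^T]=\norm{\alpha}_2^2\,\mathrm{I}_n$), so $E:=M-\mathrm{I}_n$ is centered and symmetric; using that the rows of $\mathbb{Y}$ are independent with independent Gaussian coordinates one finds $\mathrm{Var}(E_{ij})=(\norm{\alpha}_4/\norm{\alpha}_2)^4$ for $i\ne j$, $\mathrm{Var}(E_{ii})=2(\norm{\alpha}_4/\norm{\alpha}_2)^4$, and that the diagonal entries $E_{ii}$ are mutually independent, whence
$$\EE\,\mathrm{tr}(E^2)=(n^2+n)\Big(\tfrac{\norm{\alpha}_4}{\norm{\alpha}_2}\Big)^4,\qquad \EE\big[(\mathrm{tr}\,E)^2\big]=2n\Big(\tfrac{\norm{\alpha}_4}{\norm{\alpha}_2}\Big)^4.$$
The bridge to the statement is the elementary eigenvalue inequality $-\ln(1+x)\le -x+x^2$, valid for all $x\ge-\tfrac12$: on the event $\mathcal G:=\{\lambda_{\min}(M)\ge\tfrac12\}$ it yields $-\ln\det M=-\mathrm{tr}\ln(\mathrm{I}_n+E)\le -\mathrm{tr}(E)+\mathrm{tr}(E^2)$.

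Since $\EE\,\mathrm{tr}(E)=0$ and $\mathrm{tr}(E^2)\ge 0$, this gives
$$\EE[-\ln\det M]\;\le\;\EE\,\mathrm{tr}(E^2)\;+\;\EE\big[\mathrm{tr}(E)\mathbbm{1}_{\mathcal G^c}\big]\;+\;\EE\big[(-\ln\det M)\mathbbm{1}_{\mathcal G^c}\big],$$
and by Cauchy--Schwarz the two last terms are at most $\sqrt{\EE[(\mathrm{tr}\,E)^2]}\,\sqrt{\PP(\mathcal G^c)}$ and $\sqrt{\EE[(\ln\det M)^2]}\,\sqrt{\PP(\mathcal G^c)}$. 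The first term is $\le 2n^2(\norm{\alpha}_4/\norm{\alpha}_2)^4$ and the second is $\le\sqrt2\,\sqrt{n(\norm{\alpha}_4/\norm{\alpha}_2)^4}$ once $\PP(\mathcal G^c)\le 1$, so these already reproduce the two terms of the claimed bound; everything reduces to showing $\PP(\mathcal G^c)$ is small enough, together with a mild moment bound on $\ln\det M$, to make the third term negligible.

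For the tail bound, fix a unit vector $u$; then $u^\top M u=\tfrac{1}{\norm{\alpha}_2^2}\sum_k\alpha_k^2 g_k^2$ with the $g_k$ i.i.d.\ standard Gaussians, has mean $1$, and \eqref{berenstein type} (with weights $v_k=\alpha_k^2/\norm{\alpha}_2^2$, so $\norm{v}_\infty=1/\norm{\alpha}_2^2$ after normalization) gives $\PP(|u^\top M u-1|>\tfrac14)\le 2e^{-\norm{\alpha}_2^2/16}$. Union over a $\tfrac1{16}$-net $\mathcal N$ of $\Sph$ of size $\le C_0^{\,n}$: on the event $\{|u^\top M u-1|\le\tfrac14\ \forall u\in\mathcal N\}$ one gets both $\norm{M}=O(1)$ and, via the standard net perturbation estimate, $\lambda_{\min}(M)\ge\tfrac12$, hence $\PP(\mathcal G^c)\le C_1^{\,n}e^{-c_1\norm{\alpha}_2^2}$. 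In the regime where the lemma is applied (Theorem~\ref{thm: entropy} assumes $(\norm{\alpha}_2/\norm{\alpha}_4)^4/n^3\to\infty$, which after normalization forces $\norm{\alpha}_2^2\gg n^{3/2}\gg n$) this is super-exponentially small and the third term above is far smaller than $\sqrt{n(\norm{\alpha}_4/\norm{\alpha}_2)^4}$; the easier complementary range $\norm{\alpha}_2^2=O(\sqrt n)$ is handled by the crude bound $\EE[-\ln\det M]=O(n)$, which is $\lesssim n^2(\norm{\alpha}_4/\norm{\alpha}_2)^4$ because $(\norm{\alpha}_4/\norm{\alpha}_2)^4\ge\norm{\alpha}_2^{-4}$. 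The needed moment bound $\EE[(\ln\det M)^2]\le Cn^2$, uniform in $d$, comes from the Gram--Schmidt/QR identity $\ln\det M=\sum_{i=1}^n\ln\!\big(\tfrac{1}{\norm{\alpha}_2^2}g_i^\top D_\alpha Q_i D_\alpha g_i\big)$ with $g_i\sim\mathcal N(0,\mathrm{I}_d)$ and $Q_i$ a projection of rank $d-i+1$: each summand is the log of a positive quadratic form in Gaussians whose upper tail is controlled by $\EE[\,\cdot\mid Q_i]\le1$ and whose lower tail is controlled using $g_i^\top D_\alpha Q_i D_\alpha g_i\ge\norm{Q_i e_1}^2\cdot(\text{a standard }\chi^2_1)$ plus the Bernstein concentration of $\norm{D_\alpha g_i}^2$ around $\norm{\alpha}_2^2$, so each summand has second moment bounded by an absolute constant.

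The main obstacle is precisely the bad event $\mathcal G^c=\{\lambda_{\min}(M)<\tfrac12\}$: one must simultaneously show that $\PP(\mathcal G^c)$ is super-exponentially small — a net-plus-Bernstein argument whose $C_0^{\,n}$ loss is beaten by $e^{-c\norm{\alpha}_2^2}$ only when $\norm{\alpha}_2^2\gg n$ — and that $\ln\det M$ is not too wildly negative in an $L^2$ sense that does not degrade with the ambient dimension $d$, so that the Cauchy--Schwarz pairing of the rare-but-large contribution stays controlled; stitching this together across all ranges of $\norm{\alpha}_2^2$ relative to $n$ is the fussiest part of the argument.
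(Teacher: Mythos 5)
Your treatment of the main term is essentially the paper's: the same decomposition on $\{\lambda_{\min}\ge\tfrac12\}$, the same second-order logarithm inequality applied to the eigenvalues of $E=M-\mathrm{I}_n$, and the same moment computations giving $\EE\,\mathrm{tr}(E^2)\lesssim n^2(\norm{\alpha}_4/\norm{\alpha}_2)^4$ and $\EE[(\mathrm{tr}E)^2]\lesssim n(\norm{\alpha}_4/\norm{\alpha}_2)^4$, as well as the same $\eps$-net plus Bernstein bound on $\PP(\lambda_{\min}<\tfrac12)$. Where you diverge is the bad event, and that is where there is a genuine gap. The paper bounds $-\ln\det M\le -n\ln\lambda_{\min}$ and integrates $\PP(\lambda_{\min}<s)$ over all scales $s$, which forces it to import a nontrivial \emph{small-ball} inequality for anisotropic Gaussian norms (Lata\l a's $\PP(\norm{D_\alpha Z}<t\norm{D_\alpha}_{HS})\le t^{C_L\norm{\alpha}_2^2}$). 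You instead pair $\PP(\mathcal G^c)$ with $\sqrt{\EE[(\ln\det M)^2]}$ via Cauchy--Schwarz and claim $\EE[(\ln\det M)^2]\le Cn^2$ \emph{uniformly in $d$} from the QR identity. Your sketched justification of the lower tail of each Gram--Schmidt pivot --- $g_i^\top D_\alpha Q_iD_\alpha g_i\ge\norm{D_\alpha Q_iu}^2\chi^2_1$ for a single direction $u$ --- does not deliver this: since $\norm{D_\alpha}_{op}=1$ after normalization, the best available variance in a single direction is $\norm{D_\alpha Q_i}_{op}^2\le 1$, and generically only $\norm{D_\alpha Q_i}_{op}^2\ge(\norm{\alpha}_2^2-n)/d$ is guaranteed. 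That yields a per-summand bound of order $\ln^2 d$ rather than an absolute constant, hence $\EE[(\ln\det M)^2]\lesssim n^2\ln^2 d$; because $d$ is completely unconstrained relative to $n$ and $\norm{\alpha}_2$, the product with $\sqrt{\PP(\mathcal G^c)}\le C^{n/2}e^{-c\norm{\alpha}_2^2/2}$ need not be small. Controlling the extreme lower tail of $\norm{Q_iD_\alpha g_i}^2/\norm{\alpha}_2^2$ independently of $d$ is exactly the point where a multi-dimensional small-ball estimate is unavoidable, and your proposal does not supply one.

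Two smaller issues in the same part of the argument: your case split covers $\norm{\alpha}_2^2\gg n^{3/2}$ and $\norm{\alpha}_2^2=O(\sqrt n)$ but leaves the window $\sqrt n\ll\norm{\alpha}_2^2\lesssim n$ unaddressed (there the net loss $C^n$ is not beaten by $e^{-c\norm{\alpha}_2^2}$, and your ``crude bound'' does not apply); and the crude bound $\EE[-\ln\det M]=O(n)$ for small $\norm{\alpha}_2^2$ is asserted without proof and is false in general (e.g.\ the left-hand side is $+\infty$ when $d<n$). The paper sidesteps both by proving the lemma only under the standing assumption $\norm{\alpha}_2^2\gg n$ inherited from the hypothesis of Theorem~\ref{thm: entropy}; if you adopt that convention explicitly, these two issues disappear, but the small-ball gap above remains and must be filled (e.g.\ by quoting Lata\l a's inequality as the paper does, or a Carbery--Wright type bound).
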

The proof will follow similar lines as Lemma 2 in ~\cite{BG15}. Namely, we will decompose the expectation on the event that the smallest eigenvalue of $\frac{1}{\norm{\alpha}_2^2}\mathbb{Y}D_\alpha\mathbb{Y}^T$, denoted by $\lambda_{\mathrm{min}}$, is larger than $\frac{1}{2}$. Lemma \ref{lem: ln bound} will then follow by the following two claims:

\begin{claim}
	$$-\EE_\mathbb{Y}\left[ \ln\det\left(\frac{1}{\norm{\alpha}_2^2}\mathbb{Y}D_\alpha\mathbb{Y}^T\right)\mathbbm{1}_{\left\{\lambda_{\mathrm{min}}\geq \frac{1}{2}\right\}}\right] \leq C\left(n^2 \left(\frac{\norm{\alpha}_4}{\norm{\alpha}_2}\right)^4 + \sqrt{n\left(\frac{\norm{\alpha}_4}{\norm{\alpha}_2}\right)^4 }\right),$$
	for a universal constant $C > 0$.
\end{claim}
\begin{proof}
	We first use the inequality $-\ln(x) \leq 1-x + (1-x)^2$ for $x \geq \frac{1}{2}$:
	\begin{align} \label{HS sum}
	-\EE_\mathbb{Y}&\left[ \ln\det\left(\frac{\mathbb{Y}D_\alpha\mathbb{Y}^T}{\norm{\alpha}_2^2}\right)\mathbbm{1}_{\left\{\lambda_{\mathrm{min}}\geq \frac{1}{2}\right\}}\right] \nonumber\\
	&\leq \EE_\mathbb{Y}\left[\Bigg\vert\mathrm{tr}\left(\mathrm{I}_n-\frac{\mathbb{Y}D_\alpha\mathbb{Y}^T}{\norm{\alpha}_2^2}\right)\Bigg\vert + \norm{\mathrm{I}_n-\frac{\mathbb{Y}D_\alpha\mathbb{Y}^T}{\norm{\alpha}_2^2}}^2_{HS}\right],
	\end{align}
	where $\norm{\cdot}_{HS}$ denotes the Hilbert-Schmidt norm. Before proceeding, we first calculate several quantities. For $1 \leq j \leq n$ denote by $A_j$ the $j^{th}$ row of $\mathbb{Y}\sqrt{D_\alpha}$ with entries $\{\sqrt{\alpha_i}y_{j,i}\}_{i=1}^d$.
	\begin{enumerate}
		\item The expected squared norm of $A_j$ is given by $\EE \norm{A_j}^2 = \sum\limits_i \EE\ \alpha_iy_{j,i}^2 = \sum\limits_i \alpha_i^2 = \norm{\alpha}_2^2$. Since $y_{j,i}$ is a centred Gaussian with variance $\alpha_i$.
		\item When $j \neq k$, $A_j$ and $A_k$ are independent, and so $\EE\ \norm{A_j}^2\norm{A_k}^2 = \left(\sum\limits_i\alpha_i^2\right)^2 = \norm{\alpha}_2^4$.
		\item When $j \neq k$, the expected squared inner product between two rows is given by 
		\begin{align*}
		\EE \langle A_j,A_k\rangle^2 &= \EE\left(\sum\limits_{i = 1}^{d}\alpha_iy_{j,i}y_{k,i}\right)^2 \\&= \sum\limits_{i=1}^{d}\alpha_i^2\EE y_{j,i}^2y_{k,i}^2 + \sum\limits_{i_1 \neq i_2} \alpha_{i_1}\alpha_{i_2}\EE y_{j,i_1}y_{k,i_1}y_{j,i_2}y_{k,i_2} =\sum\limits_{i=1}^d\alpha_i^4 = \norm{\alpha}_4^4.
		\end{align*}
		\item The expected 4th power of the norm is given by  
		\begin{align*}
		\EE \norm{A_j}^4 &= \EE \left(\sum\limits_i \alpha_iy_{j,i}^2\right)^2 = \sum\limits_i \alpha_i^2\EE y_{j,i}^4 + \sum\limits_{i\neq k}\alpha_i\alpha_k\EE y_{j,i}^2y_{j,k}^2 \\&\leq 3\sum\limits_i\alpha_i^4 + \left(\sum\limits_i\alpha_i^2\right)^2 = 3\norm{\alpha}_4^4 +\norm{\alpha}_2^4,
		\end{align*}
		when we remember that the 4th moment of a centred Gaussian with variance $\alpha_i$ is $3\alpha_i^2$.
	\end{enumerate}
	We turn to bound each term of the sum \eqref{HS sum}:
	\begin{align*}
	&\EE_\mathbb{Y}\Bigg\vert\mathrm{tr}\left(I_n-\frac{1}{\norm{\alpha}_2^2}\mathbb{Y}D_\alpha\mathbb{Y}^T\right)\Bigg\vert \\
	\leq& 
	\sqrt{\EE_\mathbb{Y}\mathrm{tr}^2\left(I_n-\frac{1}{\norm{\alpha}_2^2}\mathbb{Y}D_\alpha\mathbb{Y}^T\right)} = \sqrt{\EE_\mathbb{Y}\left(\sum\limits_{j=1}^{n}\left(1 -\frac{\norm{A_j}^2}{\norm{\alpha}_2^2}\right)\right)^2}\\
	=& \sqrt{\EE_\mathbb{Y} \left(n^2 - \frac{2n}{\norm{\alpha}_2^2}\sum\limits_{j=1}^{n}\norm{A_j}^2 + \frac{1}{\norm{\alpha}_2^4}\sum\limits_{j \neq k}\norm{A_j}^2\norm{A_k}^2 + \frac{1}{\norm{\alpha}^4_2}\sum\limits_{j=1}^{n}\norm{A_j}^4\right)}\\
	\leq&\sqrt{n^2 - 2n^2 + 2\binom{n}{2} + \frac{n}{\norm{\alpha}_2^4}\left(3\norm{\alpha}_4^4 +\norm{\alpha}_2^4\right)} = \sqrt{3n\frac{\norm{\alpha}_4^4}{\norm{\alpha}_2^4}}.
	\end{align*}
	Similarly, we may deal with the second term:
	\begin{equation*}
	\left.\begin{aligned}
	&\EE_\mathbb{Y}\norm{I_n-\frac{1}{\norm{\alpha}_2^2}\mathbb{Y}D_\alpha\mathbb{Y}^T}^2_{HS}= \left(\sum\limits_{k,j}\frac{1}{\norm{\alpha}_2^4}\EE_\mathbb{Y}\left<A_j, A_k\right>^2\right)- n =\\ &\frac{1}{\norm{\alpha}_2^4}\sum\limits_{j=1}^{n}\EE_\mathbb{Y} \norm{A_j}^4 + \frac{1}{\norm{\alpha}_2^4}\sum\limits_{j\neq k} \left<A_j,A_k\right>^2 -n \leq\\&
	\frac{n}{\norm{\alpha}_2^4}(3\norm{\alpha}_4^4 + \norm{\alpha}_2^4) + \frac{2}{\norm{\alpha}_2^4}\binom{n}{2}\norm{\alpha}_4^4 - n =\\& 3n\frac{\norm{\alpha}_4^4}{\norm{\alpha}_2^4} +(n^2-n)\frac{\norm{\alpha}_4^4}{\norm{\alpha}_2^4} \leq 3n^2\frac{\norm{\alpha}_4^4}{\norm{\alpha}_2^4}.
	\end{aligned}\right.
	\end{equation*}
	Combining \eqref{HS sum} with the last two displays gives $$-\EE_\mathbb{Y}\left[ \ln\det\left(\frac{1}{\norm{\alpha}_2^2}\mathbb{Y}D_\alpha\mathbb{Y}^T\right)\mathbbm{1}_{\left\{\lambda_{\mathrm{min}}\geq \frac{1}{2}\right\}}\right] \leq 3\left(n^2 \left(\frac{\norm{\alpha}_4}{\norm{\alpha}_2}\right)^4 + \sqrt{n\left(\frac{\norm{\alpha}_4}{\norm{\alpha}_2}\right)^4 }\right).$$
\end{proof}
\begin{claim}
	$$\EE_\mathbb{Y}\left[ \ln\det\left(\frac{1}{\norm{\alpha}_2^2}\mathbb{Y}D_\alpha\mathbb{Y}^T\right)\mathbbm{1}_{\{\lambda_{\mathrm{min}} < 1/2\}}\right] < n\exp(-C\norm{\alpha}_2^2),$$
	for a universal constant $C>0$.
\end{claim}
\begin{proof}
	Observe that for any $\xi \in (0,\frac{1}{2})$:
	\begin{align}\label{integral-bound}
	-\EE_\mathbb{Y}\left[ \ln\det\left(\frac{\mathbb{Y}D_\alpha\mathbb{Y}^T}{\norm{\alpha}_2^2}\right)\mathbbm{1}_{\left\{\lambda_{\mathrm{min}} < 1/2\right\}}\right] &\leq n\EE\left(-\log(\lambda_{\mathrm{min}})\mathbbm{1}_{\left\{\lambda_{\mathrm{min}} < 1/2\right\}}\right)\nonumber\\
	&=n\int\limits_{\log(2)}^\infty\PP(-\log(\lambda_{\mathrm{min}})>t)dt\nonumber\\
	&=n\int\limits_0^{1/2}\frac{1}{s}\PP(\lambda_\mathrm{min}<s)ds\nonumber\\
	&\leq \frac{n}{\xi}\PP(\lambda_{\mathrm{min}}<1/2) + n\int\limits_0^\xi\frac{1}{s}\PP(\lambda_{\mathrm{min}}<s)ds.
	\end{align}
	By allowing $\xi$ to be some small constant, we'll need to bound $\PP(\lambda_{\mathrm{min}}<1/2)$ and $\PP(\lambda_{\mathrm{min}}<s)$ for small $s$.\\
	\\
	Recall that for any $s$, $\lambda_{\mathrm{min}} < s$ implies the existence of $\theta \in \mathbb{S}^{n-1}$ such that 
	\begin{equation*}
	\theta^T\frac{\mathbb{Y}D_\alpha\mathbb{Y}^T}{\norm{\alpha}^2_2}\theta < s \mbox{   , or equivalently   } \norm{\sqrt{D_\alpha}\mathbb{Y}^T\theta}^2 < s\norm{\alpha}^2_2.
	\end{equation*}
	Also, if $\theta$ is such that $\norm{\frac{\sqrt{D_\alpha}\mathbb{Y}^T}{\norm{\alpha}_2}\theta} < \sqrt{s}$, then for any $\theta' \in \mathbb{S}^{n-1}$, \begin{equation*}
	\norm{\frac{\sqrt{D_\alpha}\mathbb{Y}^T}{\norm{\alpha}_2}\theta'} < \sqrt{s} + \sqrt{\lambda_{\mathrm{max}}}\norm{\theta - \theta'},
	\end{equation*} 
	where $\lambda_{\mathrm{max}}$ is the largest eigenvalue of $\frac{\mathbb{Y}D_\alpha\mathbb{Y}^T}{\norm{\alpha}^2_2}$.\\
	\\
	We will first bound $\PP\left(\lambda_{\mathrm{min}}<1/2\right)$, using an $\varepsilon$-net argument.
	Note that for each $\theta$, $ \sqrt{D_\alpha}\mathbb{Y}^T\theta$ is distributed as $\mathcal{N}(0,D_\alpha^2)$. Consider the Euclidean metric on $\Sph$ and let $0<\eps<1$. We may cover $\Sph$ with $\left(\frac{3}{\varepsilon}\right)^n$  balls of radius $\varepsilon$ (see Lemma 2.3.4 in ~\cite{tao2012topics}, for example) to achieve
	\begin{equation} \label{eps-net}
	\PP\Bigg(\lambda_{\mathrm{min}}<1/2\Bigg)\leq \left(\frac{3}{\varepsilon}\right)^n\PP\left(\norm{\mathcal{N}(0,D_\alpha^2)} < \sqrt{ \frac{1.1}{2}\norm{\alpha}^2_2}\right) + \PP\left(\sqrt{\lambda_{\mathrm{max}}}>\frac{0.1}{\sqrt{2}\varepsilon}\right).
	\end{equation}
	\\
	To bound $\PP\left(\sqrt{\lambda_{\mathrm{max}}}>\frac{0.1}{\sqrt{2}\varepsilon}\right)$ we will use another $\eps$-net with $\eps = \frac{1}{2}$. Along with the fact that  $\norm{\theta -\theta'} \leq \frac{1}{2}$ implies $\norm{\frac{\sqrt{D_\alpha}\mathbb{Y}^T(\theta - \theta')}{\norm{\alpha}_2}} \leq \frac{\sqrt{\lambda_{\mathrm{max}}}}{2}$, we may see that
	\begin{align}\label{eps-net2}
	\PP\left(\sqrt{\lambda_{\mathrm{max}}}>\frac{0.1}{\sqrt{2}\varepsilon}\right) &\leq 6^n\PP\left(\norm{\sqrt{D_\alpha}\mathbb{Y}^T\theta}^2>\frac{0.01\norm{\alpha}_2^2}{4\varepsilon^2}\right)\nonumber\\
	&= 6^n\PP\left(\norm{\mathcal{N}(0,D_\alpha^2)} > \sqrt{\frac{0.01}{4\varepsilon^2}\norm{\alpha}^2_2}\right).
	\end{align}
	But, for any $x >0$:
	$$\PP\left(\norm{\mathcal{N}\left(0,D_\alpha^2\right)} > \sqrt{x\norm{\alpha}^2_2}\right) = \PP\left(\sum\limits_i\alpha_i^2\chi_i^2 >x\norm{\alpha}^2_2\right),$$
	where the $\chi_i^2$ are i.i.d. Chi-squared random variables with $1$ degree of freedom. Observe that $\EE[\alpha_i^2\chi_i^2] = \alpha_i^2$.\\
	\\
	We may now utilize the sub-exponential tail of the $\chi^2$ distribution and apply \eqref{berenstein type} with $v_i = \alpha_i^2$, noting that, by the normalization, \eqref{normalization}, $\norm{\alpha}_\infty=1$. Thus, provided that $x > 3$
	\begin{align}\label{exponential-bound}
	& \PP\left(\sum\alpha_i^2\chi_i^2 >\nonumber x\norm{\alpha}_2^2\right)\\\nonumber \leq & \PP\left(\left|\sum\alpha_i^2\chi_i^2 - \norm{\alpha}_2^2\right| > (x-1)\norm{\alpha}_2^2\right)\\
	\leq &
	2\exp\left(-\min\left(\frac{x-1}{2}\norm{\alpha}_2^2,\frac{(x-1)^2}{4}\norm{\alpha}_2^2\right)\right) \leq 2\exp\left(-\norm{\alpha}_2^2\right).
	\end{align}
	\\
	Substituting $x$ for $\frac{0.01}{4\varepsilon^2}$ in \eqref{eps-net2} shows that when $\frac{0.01}{4\varepsilon^2} > 3$ then 
	\begin{equation*}\label{lambdamax}
	\PP\left(\sqrt{\lambda_{\mathrm{max}}}>\frac{0.1}{\sqrt{2}\varepsilon}\right) \leq 6^n\exp(-\norm{\alpha}_2^2).
	\end{equation*}
	The exact same considerations as in \eqref{exponential-bound} also show that
	\begin{align*}
	&\PP\left(\norm{\mathcal{N}(0,D_\alpha^2)} < \sqrt{\frac{1.1}{2}\norm{\alpha}^2_2}\right)\\
	\leq&\PP\left(\left|\sum\limits_i\alpha_i^2\chi_i^2 -\norm{\alpha}_2^2\right| > \frac{0.9}{2}\norm{\alpha}_2^2 \right)\\
	\leq& 2\exp\left(-\frac{0.9^2}{16}\norm{\alpha}_2^2\right)\leq 2\exp\left(-\frac{\norm{\alpha}_2^2}{20}\right).
	\end{align*}
	\\
	Plugging the above two displays into \eqref{eps-net}, when $\varepsilon$ is small enough, yields
	\begin{equation}\label{boundlambdahalf}
	\PP(\lambda_{\mathrm{min}}<1/2) \leq 2\left(\frac{3}{\eps}\right)^ne^{-\frac{\norm{\alpha}_2^2}{20}} + 2\cdot 6^ne^{-\norm{\alpha}_2^2}\leq 4\exp\left(\frac{3n}{\eps}-\frac{\norm{\alpha}_2^2}{20}\right).
	\end{equation}
	For general $0 < s < 1/2$, in a similar fashion to \eqref{eps-net}, using an $s$-net gives the bound
	\begin{equation}\label{eps-net3}
	\PP\Big(\lambda_{\mathrm{min}}<s\Big)\leq \left(\frac{3}{s}\right)^n\PP\left(\norm{\mathcal{N}(0,D_\alpha^2)} < \sqrt{1.1s\norm{\alpha}^2_2}\right) + \PP\left(\sqrt{\lambda_{\mathrm{max}}}>0.1/\sqrt{s}\right).
	\end{equation}
	Now, $\mathcal{N}(0,D_\alpha^2)$ can be written as $D_\alpha Z_d$ where $Z_d$ is a standard Gaussian $d$-dimensional vector. In ~\cite[ Proposition 2.6]{latala2007banach}, it was shown that there exists universal constants $C_L,C'>0$ such that for any $t<C'$:
	\begin{equation*}\label{latala}
	\PP\Big(\norm{D_\alpha Z }< t\norm{D_\alpha}_{HS}\Big) \leq \exp\left(C_L\ln(t)\left(\frac{\norm{D_\alpha}_{HS}}{\norm{D_\alpha}_{op}}\right)^2\right)= \exp\left(C_L\ln(t)\norm{\alpha}_2^2\right) = t^{C_L\norm{\alpha}_2^2},
	\end{equation*}
	with equality stemming from the facts that $\norm{D_\alpha}_{HS} = \norm{\alpha}_2$ and $\norm{D_\alpha}_{op} = \norm{\alpha}_\infty=1$.
	Thus
	\begin{equation}\label{bound-gaussian}
	\PP\left(\norm{\mathcal{N}(0,D_\alpha^2)} < \sqrt{1.1s\norm{\alpha}^2_2}\right) \leq 2s^{\frac{C_L}{2}\norm{\alpha}_2^2}.
	\end{equation}
	By revisiting \eqref{eps-net2} and replacing $\sqrt{2}\eps$ with $\sqrt{s}$ we note that for small $s$
	\begin{align*}
	\PP\Big(\sqrt{\lambda_{\mathrm{max}}}>0.1/\sqrt{s}\Big) &\leq 6^n\PP\left(\norm{\mathcal{N}(0,D_\alpha^2)} > \sqrt{\frac{0.01}{2s}\norm{\alpha}^2_2}\right)\\
	&\leq 6^n\PP\left(\left|\sum\limits_i\alpha_i^2\chi_i^2 - \norm{\alpha}_2^2\right|>\left(\frac{0.01}{2s}-1\right)\norm{\alpha}_2^2\right).\\
	\end{align*}
	And, provided that $s \leq \frac{0.01}{4}$, \eqref{exponential-bound} shows
	\begin{equation}\label{lambdamaxs}
	\PP(\sqrt{\lambda_{\mathrm{max}}}>0.1/\sqrt{s}) \leq 6^n\exp\left(-\frac{1}{2s}\left(\frac{0.01}{2}-s\right)\norm{\alpha}_2^2\right)\leq 6^ne^{-\frac{0.01\norm{\alpha}_2^2}{4s}}.
	\end{equation}
	By using \eqref{lambdamaxs} and \eqref{bound-gaussian} to bound \eqref{eps-net3} we obtain
	\begin{equation*}
	\PP(\lambda_{\mathrm{min}}<s)\leq 2\left(\frac{3}{s}\right)^ns^{\frac{C_L}{2}\norm{\alpha}_2^2} + \exp\left(2n-\frac{0.01\norm{\alpha}_2^2}{4s}\right),\ \forall s \leq \frac{0.01}{4}.
	\end{equation*}
	We have thus shown, by combining \eqref{boundlambdahalf}, together with the last inequality into \eqref{integral-bound} and choosing $\xi$ to be a small enough constant:
	\begin{align*}
	&\frac{n}{\xi}\PP(\lambda_{\mathrm{min}}<1/2) + n\int\limits_0^\xi\frac{1}{s}\PP(\lambda_{\mathrm{min}}<s)ds \leq\\
	& \frac{n}{\xi}12\exp\left(\frac{3n}{\eps}-\frac{\norm{\alpha}_2^2}{20}\right) + n\int\limits_0^\xi 3^ns^{\frac{C_L}{2}(\norm{\alpha}_2^2 - n - 1)}+ \frac{1}{s}e^{\left(2n-\frac{0.01\norm{\alpha}_2^2}{4s}\right)} ds.
	\end{align*}
	Assuming that $\xi \leq \frac{1}{e}$ and that $\norm{\alpha}_2^2 > n + 1$,
	\begin{align*}
	& n\int\limits_0^\xi 3^ns^{\frac{C_L}{2}(\norm{\alpha}_2^2 - n - 1)}ds \leq n3^n \xi^{\frac{C_L}{2}(\norm{\alpha}_2^2 - n)} \leq ne^{\frac{C_L}{2}(n -\norm{\alpha}_2^2)+2n},\\
	& n\int\limits_0^\xi  \frac{1}{s}e^{\left(2n-\frac{0.01\norm{\alpha}_2^2}{4s}\right)} ds \leq ne^{2n}\int\limits_0^\xi e^{-\frac{0.01\norm{\alpha}_2^2}{8s}} ds \leq ne^{2n}\xi e^{-\frac{0.01\norm{\alpha}_2^2}{8\xi}}.
	\end{align*}
	To obtain the desired result we observe that if $n^3\left(\frac{\norm{\alpha}_4}{\norm{\alpha}_2}\right)^4 \to 0$ then $\left(\frac{\norm{\alpha}_2}{\norm{\alpha}_4}\right)^4 >> n^3$. the inequality $\norm{\alpha}_2^2 \geq \left(\frac{\norm{\alpha}_2}{\norm{\alpha}_4}\right)^{4/3}$ implies $\norm{\alpha}_2^2 >> n$,
	which shows the existence of a constant $C>0$ for which
	$$\EE_\mathbb{Y}\left[ \ln\det\left(\frac{1}{\norm{\alpha}_2^2}\mathbb{Y}D_\alpha\mathbb{Y}^T\right)\mathbbm{1}_{\{\lambda_{\mathrm{min}} < 1/2\}}\right] < n\exp(-C\norm{\alpha}_2^2).$$
\end{proof}
To finish the prove of Theorem \ref{thm: main}(b) we must now deal with $\mathrm{TV}\left(K_p(M(n)), H_{p,\alpha}(M(n))\right)$.
\begin{lemma} \label{lem: remainder bound}
Assume $n^3 \left(\frac{\norm{\alpha}_4}{\norm{\alpha}_2}\right)^4 \to 0$, then $\mathrm{TV}(K_p(M(n)), H_{p,\alpha}(M(n))) \to 0$.
\end{lemma}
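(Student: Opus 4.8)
The key observation is that $K_p(M(n))$ and $H_{p,\alpha}(M(n))$ are produced from the \emph{same} Gaussian matrix $M(n)$, by thresholding its off-diagonal entries at $\Phi^{-1}(p)$ and at $t_{p,\alpha}/\norm{\alpha}_2$ respectively. Since the off-diagonal entries of $M(n)$ are i.i.d.\ standard Gaussians, $K_p(M(n))$ has the law of $G(n,p)$ while $H_{p,\alpha}(M(n))$ has the law of $G(n,p')$, where $p':=\PP\big(\mathcal{N}(0,1)\geq t_{p,\alpha}/\norm{\alpha}_2\big)$ is the induced edge probability, and by Lemma \ref{lem: tp} we have $|p-p'|\leq 3(\norm{\alpha}_3/\norm{\alpha}_2)^3$. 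Moreover, log-convexity of the $\ell^q$-norms gives (since $\tfrac13=\tfrac13\cdot\tfrac12+\tfrac23\cdot\tfrac14$) the interpolation bound $\norm{\alpha}_3\leq\norm{\alpha}_2^{1/3}\norm{\alpha}_4^{2/3}$, i.e.
\begin{equation*}
\left(\frac{\norm{\alpha}_3}{\norm{\alpha}_2}\right)^3\leq\left(\frac{\norm{\alpha}_4}{\norm{\alpha}_2}\right)^2,
\end{equation*}
so in particular $|p-p'|\leq 3(\norm{\alpha}_4/\norm{\alpha}_2)^2\to 0$ under the standing hypothesis, whence $p'$ is eventually bounded away from $0$ and $1$.

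It therefore suffices to bound $\mathrm{TV}(G(n,p),G(n,p'))$. A naive coupling (run both thresholds on the same $M(n)$ and union bound over the $\binom n2$ possible edges) only yields $\mathrm{TV}\leq\binom n2|p-p'|$, which is too weak by roughly a factor of $n$; instead I would extract a square-root saving via Pinsker's inequality \eqref{pinsker} together with the chain rule \eqref{chain rule}. Since $G(n,p)$ and $G(n,p')$ are products of $\binom n2$ independent Bernoulli variables, the chain rule gives
\begin{equation*}
\mathrm{Ent}[G(n,p')\,\|\,G(n,p)]=\binom n2\left(p'\ln\frac{p'}{p}+(1-p')\ln\frac{1-p'}{1-p}\right),
\end{equation*}
and a second-order Taylor expansion of the bracketed expression around $p'=p$ shows it is at most $C_p(p-p')^2$ once $|p-p'|$ is small (hence for all large $n$), with $C_p$ depending only on $p$. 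Combining with Pinsker's inequality and $|p-p'|\leq 3(\norm{\alpha}_3/\norm{\alpha}_2)^3$,
\begin{equation*}
\mathrm{TV}(G(n,p),G(n,p'))\leq\sqrt{\tfrac12\,\mathrm{Ent}[G(n,p')\,\|\,G(n,p)]}\leq C_p'\,n\left(\frac{\norm{\alpha}_3}{\norm{\alpha}_2}\right)^3
\end{equation*}
for a constant $C_p'$ depending only on $p$.

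Finally, applying the interpolation bound from the first paragraph,
\begin{equation*}
\mathrm{TV}(K_p(M(n)),H_{p,\alpha}(M(n)))\leq C_p'\,n\left(\frac{\norm{\alpha}_4}{\norm{\alpha}_2}\right)^2=C_p'\sqrt{n^2\left(\frac{\norm{\alpha}_4}{\norm{\alpha}_2}\right)^4}\leq C_p'\sqrt{n^3\left(\frac{\norm{\alpha}_4}{\norm{\alpha}_2}\right)^4}\longrightarrow 0,
\end{equation*}
which is the assertion of the lemma. The only genuine subtlety is the factor-of-$n$ loss in the naive union bound: one must use the quadratic gain of Pinsker (equivalently, a Hellinger or Cauchy--Schwarz estimate for product measures) \emph{before} invoking $\ell^q$-interpolation, so that the resulting bound $n(\norm{\alpha}_3/\norm{\alpha}_2)^3\leq\sqrt{n^2(\norm{\alpha}_4/\norm{\alpha}_2)^4}$ is governed by the assumed $n^3(\norm{\alpha}_4/\norm{\alpha}_2)^4\to 0$; the remaining steps are routine once Lemma \ref{lem: tp} is in hand.
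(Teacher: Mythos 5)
Your proposal is correct and follows essentially the same route as the paper: both arguments reduce the problem to the total variation between two product Bernoulli measures with parameters $p$ and $p'$, apply Pinsker's inequality together with the chain rule to get a bound of order $n^2(p-p')^2$, invoke Lemma \ref{lem: tp} for $|p-p'|\leq 3(\norm{\alpha}_3/\norm{\alpha}_2)^3$, and finish with the norm comparison $(\norm{\alpha}_3/\norm{\alpha}_2)^3\leq(\norm{\alpha}_4/\norm{\alpha}_2)^2$ (which you derive by $\ell^q$-interpolation and the paper by Cauchy--Schwarz --- the same inequality). Your explicit remark that $p'$ is eventually bounded away from $0$ and $1$, so that the quadratic bound on the Bernoulli relative entropy holds with a constant depending only on $p$, is a point the paper handles only implicitly.
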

\begin{proof}
First, we again pass to relative entropy using \eqref{pinsker}, Pinsker's inequality:
$$\mathrm{TV}(K_p(M(n)), H_{p,\alpha}(M(n)) \leq \sqrt{\mathrm{Ent}\left[K_p(M(n))|| H_{p,\alpha}(M(n))\right]}.$$
We note that both $K_p(M(n))$ and $H_{p,\alpha}(M(n))$ are simply Bernoulli matrices. The entries of $K_p(M(n))$ are i.i.d. $\mathrm{Bernoulli(p)}$, while the entries of $H_{p,\alpha}(M(n))$ are i.i.d. $\mathrm{Bernoulli}(p')$ where $p' = \Phi^{-1}\left(\frac{t_{p,\alpha}}{\norm{\alpha}_2}\right)$. Defining $\mathrm{Ent}[p||p'] := \mathrm{Ent}\left[\mathrm{Bernoulli}(p)||\mathrm{Bernoulli}(p')\right]$ and using the chain rule \eqref{chain rule} for relative entropy yields
$$\mathrm{Ent}\left[K_p(M(n))|| H_{p,\alpha}(M(n))\right] \leq n^2\mathrm{Ent}[p||p'].$$
One may verify that 
$$\lim\limits_{p'\to p}\frac{\mathrm{Ent}[p||p']}{(p-p')^2} = \lim\limits_{p'\to p} \frac{p\ln(\frac{p}{p'})+(1-p)\ln(\frac{1-p}{1-p'})}{(p-p')^2} = \frac{1}{2p-2p^2}.$$
So, $\frac{\mathrm{Ent}[p||p']}{(p-p')^2}$ is a continuous function on $(0,1) \times (0,1)$ and is bounded on every compact subset of its domain. Thus, there exists a constant $C_p$, depending on $p$ such that 
$$\mathrm{Ent}[p||p'] \leq C_p(p-p')^2.$$
By Lemma \ref{lem: tp}, $|p - p'| \leq 3\left(\frac{\norm{\alpha}_3}{\norm{\alpha}_2}\right)^3$,
which affords the bound
$$\mathrm{Ent}[p||p'] \leq 9C_p \left(\frac{\norm{\alpha}_3}{\norm{\alpha}_2}\right)^6.$$
But now, by Cauchy-Schwartz's inequality, $\norm{\alpha}_3^3 = \sum\limits_i\alpha_i\alpha_i^2\leq \sqrt{\norm{\alpha}_2^2\norm{\alpha}_4^4}$. Combining all of the above
\begin{align*}
&\mathrm{TV}\left(K_p(M(n)), H_{p,\alpha}(M(n))\right)^2 \leq \mathrm{Ent}[K_p(M(n))|| H_{p,\alpha}(M(n))]  \\
\leq & n^2\mathrm{Ent}(p||p') \leq 9C_pn^2 \left(\frac{\norm{\alpha}_3}{\norm{\alpha}_2}\right)^6 \leq n^2\frac{\norm{\alpha}^4_4\norm{\alpha}_2^2}{\norm{\alpha}^6_2} < n^3 \left(\frac{\norm{\alpha}_4}{\norm{\alpha}_2}\right)^4.
\end{align*}
\end{proof}

\bibliographystyle{acm}
\bibliography{bib}

\end{document}